\documentclass[11pt]{amsart}
\headheight=8pt     \topmargin=0pt \textheight=624pt
\textwidth=432pt \oddsidemargin=25pt \evensidemargin=25pt

\usepackage{amsmath}
\usepackage{amssymb}
\usepackage{amsfonts}
\usepackage{amsthm}
\usepackage{enumerate}
\usepackage[mathscr]{eucal}
\usepackage{amsthm}
\usepackage{amscd}




\newtheorem{theorem}{Theorem}[section]
\newtheorem{proposition}[theorem]{Proposition}
\newtheorem{conjecture}[theorem]{Conjecture}

\newtheorem{corollary}[theorem]{Corollary}
\newtheorem{lemma}[theorem]{Lemma}

\theoremstyle{definition}

\numberwithin{theorem}{section}
\numberwithin{definition}{section}
\numberwithin{equation}{section}

\def\tell{{\tilde \ell}}
\def\nuv{\nu}

\def\intslash{\rlap{\kern  .32em $\mspace {.5mu}\backslash$ }\int}
\def\inn#1#2{\langle#1,#2\rangle}

\newcommand{\Be}{\begin{equation}}
\newcommand{\Ee}{\end{equation}}

\newcommand{\Bm}{\begin{multline}}
\newcommand{\Em}{\end{multline}}
\newcommand{\Bea}{\begin{eqnarray}}
\newcommand{\Eea}{\end{eqnarray}}
\newcommand{\Beas}{\begin{eqnarray*}}
\newcommand{\Eeas}{\end{eqnarray*}}
\newcommand{\Benu}{\begin{enumerate}}
\newcommand{\Eenu}{\end{enumerate}}
\newcommand{\Bi}{\begin{itemize}}
\newcommand{\Ei}{\end{itemize}}

\newcommand{\e}{\varepsilon}
\newcommand{\R}{\mathbb{R}}

\renewcommand{\le}{\leqslant}
\renewcommand{\ge}{\geqslant}

\def\intslash{\rlap{\kern  .32em $\mspace {.5mu}\backslash$ }\int}
\def\qsl{{\rlap{\kern  .32em $\mspace {.5mu}\backslash$ }\int_{Q}}}
\def\Re{\operatorname{Re\,}}

\def\R{\mathbb R}

\def\emph#1{{\it #1 }}

\def\cf{{\it cf}}

\def\supp{{\text{\rm supp\,}}}

\def\inn#1#2{\langle#1,#2\rangle}

\def\noi{\noindent}

\def\meas{{\text{\rm meas}}}

\def\lc{\lesssim}
\def\gc{\gtrsim}


\def\eps{\varepsilon}

\def\la{\lambda}

\def\vphi{\varphi}

\def\fA{{\mathfrak {A}}}
\def\fB{{\mathfrak {B}}}

\def\fQ{{\mathfrak {Q}}}

\def\fZ{{\mathfrak {Z}}}

\def\fa{{\mathfrak {a}}}

\def\fc{{\mathfrak {c}}}


\def\fv{{\mathfrak {v}}}

\def\fz{{\mathfrak {z}}}

\def\bbR{{\mathbb {R}}}

\def\bbZ{{\mathbb {Z}}}

\def\cA{{\mathcal {A}}}

\def\cC{{\mathcal {C}}}

\def\cE{{\mathcal {E}}}
\def\cF{{\mathcal {F}}}

\def\cK{{\mathcal {K}}}

\def\cP{{\mathcal {P}}}
\def\cQ{{\mathcal {Q}}}






\def\e{\varepsilon}

\def\Re{\operatorname{Re\,} }

\renewcommand{\le}{\leqslant}
\renewcommand{\ge}{\geqslant}

\begin{document}

\newcommand{\w}{\widehat{f}}
\newcommand{\B}{\mathbb{B}}

\subjclass[2000]{42B15, 35B65}

\title[Space-time estimates for the Schr\"odinger operator]{On space-time
estimates for the Schr\"odinger operator}
\keywords{Schr\"odinger equation, Fourier restriction and Bochner--Riesz, maximal functions.}
\thanks{Supported in part by NRF grant 2012-0000940 (Korea),
ERC grant 277778, MICINN grant MTM2010-16518 (Spain) and NSF grant
0652890}

\author{Sanghyuk Lee \ \ \ \ Keith M. Rogers \ \ \ \ Andreas Seeger}


\address{Sanghyuk Lee\\ School of Mathematical Sciences,
Seoul National University, Seoul 151-742, Korea}
\email{shlee@math.snu.ac.kr}

\address{Keith Rogers \\
Instituto de Ciencias Matematicas
CSIC-UAM-UC3M-UCM\\
28049 Madrid, Spain} \email{keith.rogers@icmat.es}

\address{Andreas Seeger \\ Department of Mathematics \\ University of Wisconsin\\
480 Lincoln Drive\\
Madison, WI, 53706, USA} \email{seeger@math.wisc.edu}

\begin{abstract}
We prove mixed-norm space-time estimates for solutions of the
Schr\"odinger equation, with initial data in $L^p$-Sobolev or  Besov
spaces,
and clarify the relation with adjoint restriction.
\end{abstract}

\maketitle

\section{Introduction}\label{intro}

We are concerned with regularity questions for
the solution $u$ of the initial value problem for the Schr\"odinger equation
on $\bbR^d\times I$,
$$i\partial_tu +\Delta u=0, \qquad u(\,\cdot\,,0)=f,$$
where  $I$ is  a compact time interval.  When $f$ is a Schwartz
function, the solution can be written as $u=Uf$, with
\begin{equation}\label{form}
Uf(x,t)\equiv e^{it\Delta}\! f(x) = \frac{1}{(2\pi)^{d}}
\int_{\mathbb{R}^d} \widehat
f(\xi)\,e^{-it |\xi|^2+i\inn x\xi
} d\xi\,;\end{equation} here $\,\widehat{\,}\,$ denotes the Fourier transform  defined by
$\,\widehat{f}(\xi)\,=\int f(y)\,e^{-i\inn y\xi}dy $.

Bounds for the solution in  the spaces $L^r(I;L^q(\mathbb R^d))$
with initial data in $L^2$-Sobolev spaces  have been extensively
studied; these are known as  \lq Strichartz estimates' and they play
an  important role in the study of the  nonlinear  equation
 (see for example  \cite{tabook}).
In this paper we are  instead  concerned with bounds in the  spaces
$L^q(\bbR^d;L^r(I))$, equipped with the norm
$$ \|u\|_{L^q(\bbR^d;L^r(I))}= \Big(\int_{\bbR^d}\Big(\int_I |u(x,t)|^r
\, dt\Big)^{q/r}dx \Big)^{1/q}
$$
 when the initial datum is given in
Sobolev spaces $L^p_\alpha$,
with norm $\|f\|_{L^p_\alpha}=\|(I-\Delta)^{\alpha/2}f\|_{L^p(\bbR^d)}$. We thus
seek to prove the bound \begin{equation}\label{mixbes}
\|Uf\|_{L^{q}(\bbR^d;L^r(I))}\le C\|f\|_{L^p_{\alpha}(\bbR^d)}\,,
\end{equation}
for suitable choices of $p,q,r$ and $\alpha$. Unlike the estimates
in  $L^r(I;L^q(\mathbb R^d))$, the inequality \eqref{mixbes}  is no
longer invariant under Galilean transformations when $q\neq  r$
which usually  makes the problem   more difficult.

Estimates with particular $p,q$ and $r$ are
related to several well-known problems in harmonic analysis and various results have been  obtained in specific cases.  Notably, when $r=\infty$ and $p=2$,
   \eqref{mixbes} is the global version of the usual (local) maximal
estimates  which have been studied to prove
pointwise convergence of $Uf$
as $t\to 0$ (see for example \cite{camax}, \cite{ve}, \cite{boPrinc}, \cite{tava}, \cite{le1}). Two of the authors  \cite{rose}  proved  the sharp maximal estimates for some  $p=q>2$  which strengthen the fixed time estimates due to  Fefferman--Stein~\cite{fest}  and Miyachi~\cite{mi}.  When  $p=q> 2$ and $r=2$, the problem
 is closely related  to square function estimates for Bochner--Riesz
operators, and also to $L^q(L^2)$ regularity of solutions for the
wave equation   (see \cite{lerose} and \S \ref{secondbesch}).
Finally, for $p=2$,  some $q,r\in (2,\infty)$ and $I=\mathbb R$,
Planchon~\cite{planch} considered a homogeneous version of the
problem replacing $L^p_\alpha$ with the homogeneous space $\dot
H^\alpha$, see also  \cite{KPV}, \cite{RV}, \cite{tava} for closely
related results.   In this article we obtain some
new results on  \eqref{mixbes} for various choices of  $p, q$ and
$r$ and  clarify the relations with the aforementioned problems.

\subsection*{\it Connection with adjoint restriction estimates}  It is known that \eqref{mixbes} is closely related to estimates for  the adjoint restriction operator defined on a compact portion of the paraboloid in $\bbR^{d+1}$.   Various maximal and smoothing estimates were obtained by relying on the adjoint restriction estimate, or its bilinear and multilinear variants (see \cite{sj}, \cite{ve}, \cite{boPrinc}, \cite{tava},
\cite{le1}, \cite{ro1}, \cite{B}, \cite {bogu}).  Here we prove  an actual  equivalence of the space-time regularity estimates with  estimates for the adjoint restriction operator, which allows us to extend the range of \eqref{mixbes} by combining it with recent progress on the restriction problem \cite{bogu}. A related result establishing the equivalence between adjoint restriction and Bochner--Riesz for paraboloids was found by Carbery \cite{car}.


Let $\cE$ denote the adjoint restriction (or
Fourier extension) operator given by \Be\label{ext}\cE f(\xi, s )=
\int_{|y|\le 1} f(y)\, e^{i s |y|^2-i\inn{\xi}{y} }dy, \quad
(\xi, s )\in \bbR^{d}\times \bbR. \Ee

\noi {\bf Definition.} {\it We say that
 $\text{\rm R}^*(p\to q)$
holds if $\cE: L^p(\bbR^d)\to L^q(\bbR^{d+1})$ is bounded.}

\medskip

The  critical cases for adjoint restriction occur when
$q=\frac{d+2}{d}p'$, and for a given $q$ we denote the critical $p$
by $p(q)$. In that case, it follows from the explicit formula
\Be\label{expl}Uf(x,t)= \frac{1}{(4\pi it)^{d/2}}\int
\exp\Big(\frac{i|x-y|^2}{4t} \Big)f(y)\, dy\Ee and scaling that
 $\text{\rm R}^*(p(q)\to q)$  implies the
$L^{p(q)}(\Bbb R^d)\to L^{q}(\Bbb R^d\times I) $ boundedness of $U$.
Moreover it was shown in \cite{ro1} that it implies the
$L^{q}_\alpha\to L^{q}(\bbR^d\times I)$ bound for $\alpha>d(1
-\frac{2}{q})-\frac{2}{q}$. We strengthen the connection between
 $\text{\rm R}^*(p\to q)$
and Schr\"odinger estimates  by establishing an   equivalence for
general $p,q$. In order to  formulate it  we invoke
Besov spaces $B^p_{\alpha,\nu}$.
 Recall that
$\|f\|_{B^p_{\alpha,\nu}}=(\sum_{k\ge 0}2^{k\alpha\nu}\| P_kf\|_p^\nu)^{1/\nu}$
where  for $k\ge 1$, the
operators $P_k$  localize frequencies to annuli of width
$\approx 2^k$ and $P_0=I-\sum_{k\ge1} P_k$.

\begin{theorem} \label{besov-vs-ext}
Suppose $2\le p\le q<\infty$.
Then, for every $\nu\in (0,2]$,  the following are equivalent:

(i) $\text{ \rm R}^*(p\to q)$ holds.

(ii) The operator $U: B^p_{\alpha,\nu}(\bbR^d)
\to L^q(\bbR^{d}\times
I)$ is bounded with
 $\alpha= d\big(1-\frac 1p-\frac{1}{q}\big)-\frac{2}q.$
\end{theorem}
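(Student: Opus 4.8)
The plan is to exploit scaling to pass between the fixed-scale extension operator $\cE$ and the dyadic pieces $P_k f$ of the Schr\"odinger solution. First I would normalize: after a Littlewood--Paley decomposition $f=\sum_k P_k f$, it suffices by the definition of $B^p_{\alpha,\nu}$ and the embedding $\ell^\nu \hookrightarrow \ell^2$ (since $\nu\le 2$) together with an $\ell^2$-almost-orthogonality/square-function argument in the space variable to understand the single-frequency bound
\[
\|U(P_k f)\|_{L^q(\bbR^d\times I)} \lc 2^{k\alpha}\|P_k f\|_p .
\]
For the piece $k=0$ this is elementary (the $t$-interval is compact and the symbol is smooth and compactly supported), so the content is in $k\ge 1$. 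Here I would rescale: writing $\widehat{P_k f}$ supported in $|\xi|\approx 2^k$, substitute $\xi = 2^k\eta$ and $(x,t)=(2^{-k}y, 2^{-2k}s)$. Under this change of variables $Uf$ on $\bbR^d\times I$ is converted, up to the harmless $P_0$-type cutoff, into $\cE$-type integrals $\cE g(y,s)$ with $g$ an $L^p$-normalized bump adapted to the unit annulus, and $(y,s)$ ranging over $\bbR^d\times[0, 2^{2k}|I|]$; the Jacobian factors account exactly for the exponent $\alpha = d(1-\tfrac1p-\tfrac1q)-\tfrac2q$ once one records $\|P_k f\|_p\approx 2^{kd/p'}\cdot(\text{rescaled }L^p\text{ norm})$ and the $(x,t)$-measure rescaling contributes $2^{-k(d+2)/q}$.

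The direction (i)$\Rightarrow$(ii) then reduces to summing the rescaled estimate $\|\cE g\|_{L^q(\bbR^d\times \bbR)}\lc \|g\|_p$ over the dyadic blocks; the growing time-interval $[0,2^{2k}|I|]$ is not an obstacle because R$^*(p\to q)$ is a \emph{global} space-time bound, so we simply discard the interval, and the geometric decay in $k$ coming from $\alpha<0$ vs. the gain is arranged so that the $\ell^\nu$ (or $\ell^2$) sum over $k$ converges — this is where one uses $2\le p\le q$ and the precise value of $\alpha$. One subtlety: to go from the annulus-localized extension estimate back to the \emph{full disc} extension operator $\cE$ in \eqref{ext} (whose symbol lives on $|y|\le 1$, not on an annulus), one decomposes the disc into $O(\log)$-many dyadic annuli and a central ball, applies the annular estimate to each after rescaling, and sums; conversely an annular estimate follows from the disc estimate by restriction of $f$. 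So (i) and the annular version are equivalent with uniform constants.

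For the converse (ii)$\Rightarrow$(i), I would run the rescaling backwards: given $g\in L^p$ supported in $|y|\le 1$, pick $k$ large, define $f$ by $\widehat f(\xi) = \widehat{g}(\cdot)$-type transplant to the annulus $|\xi|\approx 2^k$ (using the central ball plus dyadic annuli decomposition of the disc as above so that each piece is a genuine $P_k$-localized function), and observe that the hypothesis (ii) applied to this $f$, after undoing the scaling, yields $\|\cE g\|_{L^q(\bbR^d\times[0,T])}\lc \|g\|_p$ with $T=2^{2k}|I|\to\infty$; letting $k\to\infty$ (monotone convergence in the $s$-integral) gives the global bound $\|\cE g\|_{L^q(\bbR^{d+1})}\lc\|g\|_p$, i.e. R$^*(p\to q)$. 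The main obstacle I anticipate is purely technical bookkeeping: making the Littlewood--Paley reassembly in direction (i) respect the mixed/unmixed norm (here $L^q(\bbR^d\times I)$ with equal exponents, so this is the easy case — one uses that $\chi_I\le$ a Schwartz function and Minkowski/square-function inequalities in $L^q$, $q\ge 2$) and checking that the transplantation in the converse does not destroy the $L^p$ normalization or the support condition. Once the scaling dictionary is set up cleanly, both implications are essentially formal; the arithmetic of exponents is the only place where the hypotheses $2\le p\le q$ and the exact $\alpha$ are genuinely used.
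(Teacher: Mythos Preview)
Your proposal has a genuine gap in the step where you identify $U$ with $\cE$ via parabolic rescaling. The rescaling $\xi=2^k\eta$, $(x,t)=(2^{-k}y,2^{-2k}s)$ converts $U(P_kf)(x,t)$ into $2^{kd}Ug(y,s)$ where $g$ is the physical-space dilate with $\widehat g(\eta)=\widehat{P_kf}(2^k\eta)$ supported in the unit annulus; and indeed $\|P_kf\|_p=2^{kd/p'}\|g\|_p$ as you write. But $Ug(y,s)$ is (up to a constant and a reflection) $\cE(\widehat g\,)$, \emph{not} $\cE g$. The hypothesis $\text{R}^*(p\to q)$ thus bounds $\|Ug\|_q$ by $\|\widehat g\|_p$, and for $p>2$ there is no inequality $\|\widehat g\|_p\lc\|g\|_p$ even under compact Fourier support. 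If you redo the exponent bookkeeping with $\|\widehat g\|_p$ in place of $\|g\|_p$ you will find a spurious factor $2^{kd(1-2/p)}$, which blows up unless $p=2$. The same mismatch breaks your converse: rescaling a single-block $f$ and applying (ii) yields control of $\cE(\widehat g\,)$ in terms of $\|g\|_p$, which is not $\text{R}^*(p\to q)$.

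The paper's proof avoids this by \emph{not} using the Fourier-side scaling. Instead it applies stationary phase (equivalently, the explicit fundamental solution \eqref{expl}) to write $e^{it\Delta}\psi(\tfrac{D}{\la})f$ as a \emph{physical-space} oscillatory integral
\[
\frac{1}{(4\pi it)^{d/2}}\int \psi_\nu\Big(\frac{x-y}{2\la t}\Big)e^{i|x-y|^2/(4t)}f(y)\,dy
\]
plus an $O(\la^{-L})$ error (Lemma~\ref{resimpliesSchr}). After the change of variables $t\mapsto 1/t$ and a parabolic rescaling this is genuinely of the form $\cE(f_\la)$ with $f_\la$ a dilate of $f$ itself, so the $L^p$ norms now match and $\text{R}^*(p\to q)$ applies directly. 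The reverse implication (Lemma~\ref{Schrimpliesrestr}) uses the same representation; the global extension bound is then recovered from the local one not by letting a time interval grow, but by translating $\cE f$ to cover a ball of radius $\la$ in $\bbR^{d+1}$ and sending $\la\to\infty$.
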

In \S \ref{adjressect} we shall also formulate more technical
variants of Theorem \ref{besov-vs-ext} which are valid for mixed
norm spaces.

We note that the restriction $\nu\le 2$ in Theorem
\ref{besov-vs-ext}  is only needed for the implication $(i)\! \Rightarrow \!(ii)$. Moreover, the theorem implies  that
 $\text{ \rm R}^*(p\to q)$  holds if and only if for all $\la>1$
the inequality
$$\|Uf\|_{L^q(\bbR^{d+1})} \lc \la^{ d(1-\frac 1p-\frac{1}{q})-\frac{2}q} \|f\|_p$$
holds for all $f\in L^p$ with frequency support  in $\{\xi:\la/2\le |\xi|\le
2\la\}$;  of course for those $f$ the parameter $\nu$ plays no role.
For more general initial data recall that   $B^p_{\alpha,\nu}$
is contained in the Sobolev space
$L^p_\alpha$ for $\nu\le \min \{2,p\}$, and vice versa,
$L^p_\alpha$ is contained in
$B^p_{\alpha,\nu}$
for $\nu\ge \max \{2,p\}$.
It remains open whether the condition $\nu\le 2$ is necessary  and
 whether $B^p_{\alpha,2}$ can be replaced
with $L^p_\alpha$ in Theorem \ref{besov-vs-ext}.
However it follows from a result in  \cite{lerose} (see \S\ref{cflp} below)
that if one is willing to give up an endpoint in the $q$-range
then one can also
obtain results on  larger spaces including  $L^p_\alpha$, as well as mixed norm inequalities with  $r\ge q$.

\begin{corollary}\label{SobCor2} Let $2<q_0<\infty$, $1\le p_0\le q_0$, and suppose that
$\text{\rm R}^*(p_0\to q_0)$ holds. Let $q_0<q<\infty$, $q\le r\le \infty$
and suppose that $0\le \frac{1}{p}-\frac{1}{q}\le
\frac{1}{p_0}-\frac{1}{q_0}$.
 Then $$
\|Uf\|_{L^q(\bbR^{d};L^r(I))}\le C\|f\|_{B^p_{\alpha,q}(\bbR^d)},
\qquad \alpha= d\big(1-\tfrac 1p-\tfrac 1q\big)-\tfrac 2r.
$$
\end{corollary}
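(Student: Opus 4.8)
The plan is to deduce the corollary from Theorem \ref{besov-vs-ext} by interpolating the hypothesis $\text{R}^*(p_0\to q_0)$ with trivial $L^2\to L^2$ information, and then upgrading the resulting fixed-$r=q$ estimate to the range $r\ge q$ via a Minkowski/Sobolev-embedding argument in the time variable. First I would reduce to a single frequency annulus: writing $f=\sum_{k\ge 0}P_kf$ and using the triangle inequality on $L^q(\bbR^d;L^r(I))$ together with the definition of $B^p_{\alpha,q}$ (here $\nu=q\le 2$ fails in general, so one must be slightly careful — but since $q>q_0>2$, one instead uses that $\ell^1\hookrightarrow\ell^q$ only after extracting a geometric gain from the $\alpha$ in the exponent; more precisely a small loss $2^{-\e k}$ is available because we are below the critical regularity by a positive amount, see below). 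Thus it suffices to prove, for $f$ with $\supp\widehat f\subseteq\{2^{k-1}\le|\xi|\le 2^{k+1}\}$,
\[
\|Uf\|_{L^q(\bbR^d;L^r(I))}\lc 2^{k(d(1-\frac1p-\frac1q)-\frac2r)}\|f\|_p,
\]
with an implicit constant independent of $k$.

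The main engine is Theorem \ref{besov-vs-ext}: the hypothesis $\text{R}^*(p_0\to q_0)$ is equivalent to the single-annulus bound $\|Uf\|_{L^{q_0}(\bbR^{d+1})}\lc 2^{k(d(1-\frac1{p_0}-\frac1{q_0})-\frac2{q_0})}\|f\|_{p_0}$. I would interpolate this (for fixed $k$, using complex interpolation of the analytic family $e^{it\Delta}$ acting on the fixed annulus, or simply Riesz--Thorin for the associated operator) against the elementary estimate $\|Uf\|_{L^2(\bbR^d\times I)}\lc |I|^{1/2}\|f\|_2$, which holds with no derivative loss by Plancherel. The line segment joining $(1/p_0,1/q_0)$ to $(1/2,1/2)$ produces exactly the pairs $(p,q)$ with $q_0<q<\infty$ and $\frac1p-\frac1q\le\frac1{p_0}-\frac1{q_0}$ claimed in the statement — and, crucially, at each such interpolated point one obtains the single-annulus bound with the sharp exponent $\alpha_0(k)=d(1-\frac1p-\frac1q)-\frac2q$, i.e. the $r=q$ case. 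The strict inequality $q>q_0$ (rather than $q=q_0$) is what gives room on the restriction side; it is also what lets me sum in $k$: comparing the target exponent $\alpha=d(1-\frac1p-\frac1q)-\frac2r$ with $\alpha_0=d(1-\frac1p-\frac1q)-\frac2q$ and the Besov exponent in $B^p_{\alpha,q}$, the frequency-localized estimate carries a genuine power-of-$2^k$ surplus over what one needs, so $\sum_k$ converges even summing the $L^q(L^r)$ norms directly (no $\ell^q$ subtlety survives).

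For the passage from $r=q$ to general $r\ge q$ with the correct loss $\frac2r$ rather than $\frac2q$, I would invoke the result from \cite{lerose} alluded to just before the corollary (the reference to \S\ref{cflp}): on a single frequency annulus of size $2^k$, the function $t\mapsto Uf(x,t)$ essentially oscillates at frequency $\lesssim 2^{2k}$ in $t$, so a Bernstein/Sobolev-embedding inequality in the time variable on the unit interval $I$ gives $\|Uf(x,\cdot)\|_{L^r(I)}\lc 2^{2k(\frac1q-\frac1r)}\|Uf(x,\cdot)\|_{L^q(I)}$ for $r\ge q$, pointwise in $x$. Taking the $L^q_x$ norm of both sides and combining with the $r=q$ bound above yields precisely the exponent $\alpha_0-2k(\frac1q-\frac1r)=d(1-\frac1p-\frac1q)-\frac2q-\frac2k(\frac1q-\frac1r)$... wait, arithmetic: $-\frac2q+2(\frac1q-\frac1r)\cdot(-1)$ should read $-\frac2q - 2(\frac1q-\frac1r) = -\frac2r-\frac2q+\frac2q$... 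I would double-check the bookkeeping here, but the point is that the $2^{2k(1/q-1/r)}$ factor converts $\frac2q$ into $\frac2r$ exactly. This time-localization step, rather than the restriction input, is where I expect the real work to lie: one must justify the frequency-of-oscillation heuristic rigorously (e.g. by a dyadic decomposition of $I$ or an explicit kernel bound using \eqref{expl}), and ensure the constant is uniform in $x$ and $k$; but since the statement explicitly permits citing \cite{lerose} for precisely this purpose, the argument should go through cleanly once the single-annulus estimate from the interpolation step is in hand.
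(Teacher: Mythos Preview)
Your proposal has a genuine gap at the interpolation step. Interpolating the endpoint $(1/p_0,1/q_0)$ with $(1/2,1/2)$ moves $1/q$ \emph{up} toward $1/2$, i.e.\ it produces exponents $q\in[2,q_0]$, not $q>q_0$. The corollary asks precisely for $q_0<q<\infty$, so this segment never reaches the target region. (The differences $1/p-1/q$ do cover the claimed interval $[0,\,1/p_0-1/q_0]$, which may be the source of the confusion, but the individual values of $q$ are on the wrong side of $q_0$.)

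There is a second, related gap in the summation step. The exponent $\alpha=d(1-\tfrac1p-\tfrac1q)-\tfrac2r$ is \emph{exactly} the critical one (see condition $(ii)$ in Proposition~\ref{necprop}); there is no $\varepsilon$ of room. Thus even if the single--annulus bound were available at $(p,q,r)$, applying the triangle inequality to $\sum_k UP_kf$ would only give control by $\|f\|_{B^p_{\alpha,1}}$, not $\|f\|_{B^p_{\alpha,q}}$. Your proposed fix (``extract a geometric gain because we are below critical'') does not apply.

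The paper's argument addresses both points simultaneously, and differently from what you sketched. There is no interpolation with $L^2$. One first uses the trivial fact that $\text{R}^*(p_0\to q_0)$ implies $\text{R}^*(\tilde p\to q_0)$ for every $\tilde p\ge p_0$ (H\"older on the unit ball), and via Theorem~\ref{besov-vs-ext} this gives $\Gamma(\tilde p;q_0,q_0)<\infty$ in the notation of~\eqref{Gammadef}. The passage from $(\tilde p,q_0,q_0)$ to $(p,q,r)$ with $q>q_0$, $r\ge q_0$, and $1/p-1/q=1/\tilde p-1/q_0$ is then carried out by Proposition~\ref{combin}, which is the result from~\cite{lerose} referred to before the corollary. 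That proposition is based on the Fefferman--Stein sharp function, and it is what provides both the upgrade from $q_0$ to $q>q_0$ and the $\ell^q$-Besov summation at the critical index. In particular, the \cite{lerose} input is not merely a Bernstein inequality in time for the transition $r=q\to r\ge q$; it is the mechanism that replaces the interpolation-with-$L^2$ step you attempted.
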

By the  trivial $\text{R}^*(1\to\infty)$ estimate and interpolation
one can deduce the conclusion in the larger  range $p_1(q)<p\le q$,
where $p_1(q)<p_0$ is defined by $\frac{1}{p_1(q)}=
\frac{1}{p_0}+(1-\frac {q_0}{q})(1-\frac 1{p_0})$.
The
recent progress on $\text{\rm R}^*(p\to p)$
by Bourgain and
Guth \cite{bogu} can  be used to prove new estimates of the form
\begin{equation*}\label{mixbes2}
\|Uf\|_{L^{p}(\bbR^d;L^r(I))}\le
C\|f\|_{B^p_{\alpha,p}(\bbR^d)},\quad \alpha=d\big(1-\tfrac
{2}p\big)-\tfrac 2r.
\end{equation*}
In two spatial dimensions their result implies that the displayed
estimate holds in the case $r\ge p$ for $p\in(56/17,\infty)$ (see
\cite[pp. 1265]{bogu}); moreover, in higher dimensions, it holds for
the range $p\in(p_{BG}(d),\infty)$ with $p_{BG}(d)=2+
\frac{12}{4d+1-k} $ if $ d+1\equiv k ~(mod~ 3),\ k=-1, 0,1$. This
improves the result of \cite{rose}, where the estimate was shown to
hold in the range $p\in(\frac{2(d+3)}{d+1},\infty)$ using the
bilinear estimate of Tao~\cite{ta}. We will also see that Bourgain
and Guth's result can be combined with Tao's restriction bilinear
estimate to obtain the critical restriction estimates $\text{\rm R}^*(p(q)\to q)$  for
some range of
 $q$ with $q<\tfrac{2(d+3)}{d+1}$ (see
\S \ref{bgpq}).

\subsection*{\it Necessary conditions} We now consider  necessary conditions on $p,q,r$ and $\alpha$ for  \eqref{mixbes} to hold.   As previously mentioned,  due to connections with  other  problems,
 conditions  for  specific choices  of $p,q$ and $r$ are known, and
examples in those  special cases are also relevant when proving
necessary conditions for general  $p,q$ and $r$. However we also
establish additional   conditions which seem to have not been
noticed before. In particular the  necessity of the strict
inequalities in $(v)$, $(vi)$ in the following proposition
 are proved by constructions which involve the
Besicovich set (see \S\ref{necsect}).

In what follows we set
$\alpha_{cr}(p;q,r):= d(1-\frac 1p-\frac 1q)-\frac 2r$.

\begin{proposition}\label{necprop} Let $p,q,r\ge 2$ and
suppose that there is a constant $C$ such that
\begin{equation*}
\|Uf\|_{L^{q}(\bbR^d;L^r(I))}\le C\|f\|_{L^p_{\alpha}(\bbR^d)}\,,
\end{equation*}
holds whenever $f\in L^p_{\alpha}(\bbR^d)$.
Then
\begin{enumerate}[(i)]
\item  $p\le q$,
\item $ \alpha\ge \alpha_{cr}(p;q,r)$
\item $\alpha\ge \frac 1q-\frac 1r$,

\item $\alpha\ge  \frac 1q-\frac 1p$,


\item   $\alpha>  \frac 1q-\frac 1p$\ \ if\, $r>2$,

\item  $\alpha>  0$\ \  if\,  $r=2$, $p=q>2$, $d\ge 2$.

\end{enumerate}
The same conditions hold if we
replace
 Sobolev norm $L^p_{\alpha}$ by the
Besov norm  of
$B^p_{\alpha,\nu}$.
\end{proposition}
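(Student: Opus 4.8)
The strategy is to test the inequality \eqref{mixbes} against a family of explicit examples, each tailored to produce one of the conditions (i)--(vi). In each case one chooses $f$ (or a randomized/scaled family of $f$'s) with frequency support in an annulus $\{|\xi|\approx\lambda\}$, computes or lower-bounds $\|Uf\|_{L^q(L^r)}$ and $\|f\|_{L^p_\alpha}\approx\lambda^\alpha\|f\|_p$, and then lets $\lambda\to\infty$ (or $\lambda\to 0$ when a low-frequency example is needed). Since $B^p_{\alpha,\nu}$ and $L^p_\alpha$ agree up to constants on functions with frequency support in a fixed annulus, every condition obtained this way holds verbatim for the Besov norm as well, which disposes of the last sentence.

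First I would treat the ``soft'' conditions. For (ii), the scaling/focusing example: take $\widehat f$ a bump on $\{|\xi|\approx\lambda\}$, so $f$ is an $L^p$-normalized wave packet; using \eqref{expl} one sees $|Uf(x,t)|\gtrsim\lambda^{d}\cdot\lambda^{-d}= $ const on a box of dimensions $\lambda^{-1}\times\cdots\times\lambda^{-1}\times\lambda^{-2}$ near the origin, which forces the exponent balance $\alpha\ge d(1-\tfrac1p-\tfrac1q)-\tfrac2r=\alpha_{cr}$. Condition (iii) comes from a single wave packet (a tube example): $Uf$ is roughly constant of height $\lambda^{d/p'}$ on a $1\times\cdots\times1\times\lambda^{-2}$-neighborhood of a line segment, and comparing the $L^q_xL^r_t$ norm of this with $\lambda^\alpha\|f\|_p$ gives $\alpha\ge\tfrac1q-\tfrac1r$. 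For (i) and (iv), I would exploit a Knapp-type example adapted to a flat piece of the paraboloid of dimensions $\lambda^{-1/2}$ in the tangential directions: here $Uf$ is essentially constant on a dual slab, and the resulting inequality yields both $p\le q$ and $\alpha\ge\tfrac1q-\tfrac1p$ (the latter being the borderline version of the former when $\alpha$ is allowed).

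The real work is in the strict inequalities (v) and (vi), and this is where I expect the main obstacle. The idea is to superpose many Knapp examples pointing in $\approx\lambda^{(d-1)/2}$ different directions, i.e. to build $f=\sum_\nu f_\nu$ where the $f_\nu$ are wave packets concentrated on $\lambda^{1/2}$-separated caps; after a randomization of signs one estimates the $L^q_xL^r_t$ norm of $\sum\pm Uf_\nu$ by square-function/Khintchine arguments, and the point is that a logarithmic or power loss shows up exactly when $r>2$ (for (v)) or when $r=2$, $p=q>2$, $d\ge2$ (for (vi)). For (vi) one must instead use a genuine Besicovitch/Kakeya configuration: pack tubes of dimensions $1\times\lambda^{-1/2}\times\cdots$ into a set of small measure, which is possible precisely because Besicovitch sets exist in $\mathbb R^{d}$ for $d\ge2$; the overlap of the corresponding $Uf_\nu$ in a set of measure $o(1)$ (rather than $O(1)$) is what upgrades $\alpha\ge0$ to $\alpha>0$. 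Getting the bookkeeping right --- matching the $L^p$-norm of the superposed datum, the measure of the Besicovitch set, and the pointwise lower bound for $|Uf|$ on it, so that the final power of $\lambda$ is strictly negative --- is the delicate step; one has to invoke the quantitative Besicovitch construction (a $\delta$-neighborhood of a Besicovitch set has measure $\lesssim(\log\tfrac1\delta)^{-1}$ in the plane, with analogous bounds in higher dimensions) and track the $\varepsilon$-losses carefully. Everything else is routine stationary-phase estimation using \eqref{expl} and \eqref{form}.
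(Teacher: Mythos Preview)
Your overall strategy---test on explicit frequency-localized examples and let the scale tend to infinity---is correct and matches the paper for (ii)--(iv), though several of your computations are imprecise (for (ii) you must include a focusing phase $e^{it_0|\xi|^2}$ in $\widehat f$ so that $Uf$ concentrates at time $t_0\in I$; for (iii) the paper's tube has $x_1$-length $\lambda$ and $t$-width $\lambda^{-1}$, not $1\times\lambda^{-2}$; for (iv) the paper uses a thin slab $|\xi_1-\lambda|\lesssim\lambda^{-1}$, $|\xi'|\lesssim 1$ rather than a $\lambda^{1/2}$-cap).  There is, however, a genuine gap in your argument for (i).  No single test function can yield $p\le q$: that condition must hold for \emph{every} $\alpha$, and any exponent-balancing example only produces a lower bound on $\alpha$, which is vacuous once $\alpha$ is large.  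Your parenthetical remark that (i) is the ``borderline version'' of (iv) conflates the two.  The paper instead invokes the standard translation-invariance argument: test on $g+g(\cdot+ae_1)$ with $g\in C^\infty_c$, let $a\to\infty$ so the two pieces decouple on both sides, and obtain $2^{1/q}\le 2^{1/p}$.

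For (v) and (vi) you have identified the correct mechanism (Besicovitch sets plus randomization), but the paper's implementations differ substantially from your sketch.  For (v) the paper first transfers the problem to the extension operator via Lemma~\ref{Schrimpliesrestr}, uses Khintchine to pass to a vector-valued inequality, and then takes a one-parameter family ($\approx\lambda$ functions, not $\lambda^{(d-1)/2}$) of modulated characteristic functions whose extensions are large on tubes arranged by Keich's sharp two-dimensional Besicovitch construction to have union of measure $O(\lambda^{d+3}/\log\lambda)$; a H\"older argument with a level-set split of the vertical cross-section then extracts the factor $(\log\lambda)^{1/2-1/r}$.  For (vi) the paper takes a different and cleaner route than a direct packing: it shows that an $L^p\to L^p(L^2(I))$ bound would force the Bochner--Riesz-type multiplier $h_\delta(\xi)=\phi(\delta^{-1}(1-|\xi|^2))$ to be an $L^p$ multiplier uniformly in $\delta$, and then reproves the Fefferman--Kakeya lower bound $\|h_\delta\|_{M_p}\gtrsim(\log\tfrac1\delta)^{1/2-1/p}$ for $p>2$, $d\ge 2$.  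Your direct approach for (vi) may be salvageable, but the Bochner--Riesz detour avoids having to track the space-time geometry by hand.
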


The condition $(i)$ is a simple consequence of translation invariance.
When  $p=2$, the condition $(ii)$ coincides with $(iii)$ if
$\frac{d+1}q+\frac 1r= \frac d2$. This is the condition in the
endpoint version of Planchon's conjecture (\cf. \cite{planch},
\cite{lerova}); that for these exponents $U:
\dot{H}^\alpha(\bbR^d)\to L^q(\R^d;L^r(\R))$ with
 $\alpha=d\big(\tfrac 12-\tfrac 1q\big)-\tfrac 2r$ and $r\ge 2$. If $p=2$ and $r=\infty$,  then the conditions $(iii)$  and $(v)$ follow from the necessary conditions for Carleson's problem \cite{camax,sj2}, via an equivalence between local and global estimates \cite{ro1}.

The necessary conditions also naturally  connect to those in the
restriction and Bochner--Riesz problems.
 The necessity of the condition  $(vi)$ in dimensions $d\ge 2$ comes  from
the fact that a sharp square function estimate for the Schr\"odinger
operator  implies sharp bounds on Bochner--Riesz multipliers.  When
$p=q$ and $2\le r\le q$, the condition $\alpha\ge
\alpha_{cr}(p;p,r)$
is more restrictive than
$(vi)$  if $d(\frac 12-\frac 1p)-\frac 1r>0$.
In particular, if  $r=2$ and
$\alpha= \alpha_{cr}(p;p,2)$,  by $(vi)$  the  range  $p>\frac{2d}{d-1}$
is necessary
(as can be deduced from the connection to the Bochner--Riesz
conjecture in $\bbR^d$),
and  for $r=p$, $\alpha= \alpha_{cr}(p;p,p)$
 the  range $p>\frac{2(d+1)}{d}$ is necessary
  (as can be deduced from the connection with the adjoint restriction problem  for the
paraboloid in $\bbR^{d+1}$, \cf. Theorem \ref{besov-vs-ext}).  On the other hand, if  $p<q$, $r=2$,  the condition $ \alpha\ge
\alpha_{cr} (p;q,2)$ is more restrictive than $(iv)$ if
$\frac{d+1}q\le \frac{d-1}{p'}$,  the familiar range for the adjoint
restriction theorem for  the sphere in $\bbR^d$. Likewise if,
$p<q=r$ then the condition $ \alpha\ge  \max\{0,\alpha_{cr}
(p;q,q)\}$ implies $\frac{d+2}q\le \frac{d}{p'}$, the range for the
adjoint restriction theorem for  the paraboloid in  $\bbR^{d+1}$.

\subsubsection*{Remark (added March 2012)} When $d\ge 5$, an additional  necessary condition  can be deduced from
Bourgain's recent lower bounds for the Schr\"odinger maximal
estimate. Precisely he showed that $ \|UP_{2k}
f\|_{L^{q}(B(0,1);L^\infty[0,2^{-2k}])}\le C 2^{2sk}\|P_{2k}f\|_{2}
$ holds for $q\ge 2$ only if $s\ge 1/2-1/d$.  By scaling this
implies that $ \|UP_{k} f\|_{L^{q}(\mathbb
R^d;L^\infty[0,1])}\le C 2^{2sk}2^{kd(1/q-1/2)}\|P_{k}f\|_{2} $ can only hold
if $s\ge 1/2-1/d$. By Sobolev imbedding this can be perturbed to
give a necessary condition $$\alpha\ge 1-\tfrac{2}{d}
-d\big(1-\tfrac1p-\tfrac 1q\big)-\tfrac2r$$ for  $p,q,r\ge 2$, which is
effective when $p,q$ are close 2 and $r$ is relatively large.

\subsection*{\it Results for $d=1$  and $d=2$}
In one and two spatial dimensions,  via more refined analysis based
on bilinear technology,  it is possible to obtain sharp estimates.
First  we state   precise bounds for frequency localized functions
in one spatial dimension.

%

\begin{theorem}\label{onedimp-prec}
For large $\la$, let
\begin{equation*}
\fA_\la(p;q,r)=\sup\Big\{ \|Uf \|_{L^q(\bbR; L^r(I))}\, :\,
\|f\|_p\le 1, \quad \supp \widehat f\subset\{\xi: \la/5 \le
|\xi|\le 15\la\}\Big\}.
\end{equation*} Then
for  $\la\gg 1$,
 the following norm equivalences hold:

(i) For  $2\le r\le p\le q\le \infty$,
$$\fA_\la(p;q,r)\,\approx\, \begin{cases}
\la^{1/q-1/p} [\log \la]^{1/2-1/r}
&\text{ if }\quad \frac 1q+\frac 1r \ge \frac 12\, ,\\
\la^{1-1/p-1/q-2/r} &\text{ if }\quad \frac 1q+\frac 1r < \frac 12\, .
\end{cases}$$

(ii) For   $2\le p<r\le q\le \infty$,
$$\fA_\la(p;q,r)\,\approx\, \begin{cases}
\la^{1/q-1/r}
&\text{ if }\quad \frac 2q+\frac 1r \ge 1-\frac 1p\, , \\
\la^{1-1/p-1/q-2/r} &\text{ if }\quad \frac 2q+\frac 1r < 1-\frac 1p\, .
\end{cases}$$
\end{theorem}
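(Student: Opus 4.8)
The plan is to prove both the upper and lower bounds for $\fA_\la(p;q,r)$ in $d=1$, and it is natural to organize the argument around the basic rescaling that sends frequencies of size $\la$ to frequencies of size $1$. Writing $f$ with $\supp\widehat f\subset\{\la/5\le|\xi|\le 15\la\}$, the substitution $\xi\mapsto\la\xi$, $x\mapsto x/\la$, $t\mapsto t/\la^2$ converts $Uf$ on $\bbR\times I$ into $U g$ on $\bbR\times \la^2 I$ for a function $g$ with $\widehat g$ supported in a fixed annulus $\{1/5\le|\xi|\le 15\}$, at the cost of explicit powers of $\la$. Thus everything reduces to a space-time estimate on the long time interval $[0,\la^2]$ for frequency-localized data, and the two regimes in (i) and (ii) — the power-of-$\log\la$ regime versus the power-of-$\la$ regime — reflect whether the dominant contribution comes from the \emph{local} behavior in $t$ (short time scales, governed by a fixed-time/maximal bound summed over $O(\la^2)$ unit intervals) or from the \emph{global} decay/dispersion as $|t|\to\infty$ (governed by the stationary phase asymptotics in \eqref{expl}). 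I would first establish the elementary global estimate: from \eqref{expl}, $\|Uf(\cdot,t)\|_q\lesssim |t|^{-(1/2)(1-2/q)}\|f\|_{p}$ for $2\le p\le q$ after interpolating the $t^{-1/2}$ dispersive bound with conservation of the $L^2$ norm and using the frequency localization to pass from $p$ to $2$ where needed; raising to the $r$-th power in $t$ and summing the dyadic time-pieces $|t|\approx 2^j$, $j\le 2\log_2\la$, gives the $\la^{1-1/p-1/q-2/r}$ term precisely when $\frac1q+\frac1r<\frac12$ (resp.\ $\frac2q+\frac1r<1-\frac1p$), the borderline being exactly where the geometric series in $j$ switches from convergent to logarithmically divergent.

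For the complementary regime I would use the local smoothing / maximal machinery available in one dimension. On a single time interval of unit length one has, by the one-dimensional adjoint restriction estimate for the parabola (the Zygmund/Fefferman endpoint $L^6(\bbR^2)$ bound and its consequences, combined with Plancherel for the $L^2$ endpoint and the trivial $L^1\to L^\infty$ bound), sharp $L^p(\bbR)\to L^q(\bbR\times[0,1])$ and $L^p(\bbR)\to L^q(\bbR;L^r[0,1])$ estimates with the stated $\log\la$ losses coming from the overlap of $O(\la^2)$ such intervals combined with the $\ell^r$ vs.\ $\ell^2$ (or finite-overlap square-function) discrepancy. Concretely, for the case $r\le p\le q$ I expect the $\la^{1/q-1/p}[\log\la]^{1/2-1/r}$ bound to follow by: (a) a fixed-time estimate $\|Uf(\cdot,t)\|_q\lesssim \la^{1/q-1/p}\|f\|_p$ (Hausdorff–Young / Bernstein in the frequency-localized setting, sharp for $p\le q$), which handles $r=\infty$; (b) the $L^2$-based square function/Córdoba-type estimate controlling $\|Uf\|_{L^q(\bbR;L^2[0,1])}$ on each unit interval; and (c) interpolation in $r$ between (a) and (b), with the number $\la^2$ of unit intervals entering as $(\la^2)^{1/r-1/2}\cdot(\la^2)^{1/2\cdot(\text{something})}$ — the exponents arranged so that the net power of $\la$ is $1/q-1/p$ and the residual logarithm is $[\log\la]^{1/2-1/r}$, which appears because the genuinely sharp tool at $r=2$ is a square function estimate that costs a logarithm when one does not have perfect orthogonality across the $\approx\la^2$ time blocks. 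Case (ii), with $p<r\le q$, is handled similarly but now the relevant fixed-time ingredient is the adjoint-restriction bound $\text{R}^*(p\to q)$ for the parabola rather than Bernstein, so the transition exponent involves $1-1/p$ instead of $1/2$, and there is no logarithmic loss because the $\ell^r$ summation over time blocks with $r>p\ge2$ no longer sits exactly at the square-function endpoint.

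The main obstacle, and the part requiring the most care, is getting the powers of $\log\la$ \emph{exactly} right (not just as upper bounds) and matching them with lower-bound examples. For the upper bound one must avoid the naive triangle-inequality summation over the $O(\la^2)$ time blocks — which would give a power of $\la$ rather than a logarithm — and instead exploit almost-orthogonality: the pieces $Uf$ restricted to time windows $[\ell,\ell+1]$ have, after a further Córdoba-type decomposition of the frequency annulus into $\approx\la^{1/2}$ arcs, essentially disjoint space-time supports (modulo tails), so an $L^q$ square-function estimate applies cleanly and the only genuine loss is the constant in the one-dimensional square function inequality, which is $(\log)^{1/2-1/r}$ when one interpolates the sharp $L^2$ square function bound against the trivial $L^\infty$ one. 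For the \emph{lower} bounds one builds extremizers: a single wave packet (a bump of frequency width $\la^{1/2}$) shows sharpness of the dispersive/global term, while a random or deterministic superposition of $\approx\la^{1/2}$ such packets, whose tubes in $\bbR\times[0,\la^2]$ tile a set resembling a thickened parabola, realizes the $[\log\la]^{1/2-1/r}$ factor — this is the standard Besicovitch/Kakeya-flavored construction in the plane and is where the endpoint sharpness is most delicate. I would carry out: (1) rescaling reduction; (2) global dispersive estimate and its $j$-summation; (3) unit-interval fixed-time and square-function estimates, then $r$-interpolation and block-summation with orthogonality; (4) taking the better of the two bounds in each $(p,q,r)$ regime to get the displayed dichotomy; (5) the two families of lower-bound examples; with step (3) being the technical heart and step (5) being where one must be most careful about logarithms.
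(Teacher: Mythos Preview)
Your lower-bound sketch (focusing example for the pure-power regime, Besicovitch/Kakeya construction for the logarithm) is on target and matches \S\ref{necsect}. The upper-bound proposal, however, has a genuine gap at its first step. The fixed-time estimate you assert, $\|Uf(\cdot,t)\|_q\lesssim|t|^{-(1/2)(1-2/q)}\|f\|_p$ for $2\le p\le q$, is false. Interpolating the $L^1\to L^\infty$ dispersive bound with $L^2$ conservation gives only $\|Uf(\cdot,t)\|_q\lesssim|t|^{-(1/2-1/q)}\|f\|_{q'}$, and ``passing from $p$ to $2$'' via frequency localization goes the wrong way: Bernstein bounds \emph{higher} Lebesgue norms by lower ones, so there is no inequality $\|f\|_{q'}\lesssim C(\la)\|f\|_p$ (or $\|f\|_2\lesssim C(\la)\|f\|_p$) when $p\ge 2>q'$. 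A sum of $N$ well-separated wave packets at frequency $\la$ has $\|f\|_p\approx N^{1/p}$ but $\|f\|_{q'}\approx N^{1/q'}$, which blows up the ratio. Thus the ``global dispersive'' mechanism you propose for the regime $\la^{1-1/p-1/q-2/r}$ does not get off the ground, and in particular cannot distinguish the case-(ii) threshold $\tfrac2q+\tfrac1r<1-\tfrac1p$ from $\tfrac1q+\tfrac1r<\tfrac12$. The paper's route here is entirely different: it writes $(\cE f)^2$ as a sum over separation scales $|y-z|\approx 2^{-\ell}$ of bilinear pieces $\fB_{\la,\ell}$, proves \eqref{pqrbilinear} at the vertices $(p,q,r)=(\infty,\infty,\infty)$, $(4,4,4)$, $(2,\infty,2)$, $(\infty,\infty,2)$ by elementary means (the $(4,4,4)$ case being H\"ormander's endpoint parabola estimate), interpolates, and sums the resulting geometric series in $\ell$.

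The source of the logarithm is also mis-identified. In the paper's argument the three $r=2$ vertices $(2,2,2)$, $(2,\infty,2)$, $(\infty,\infty,2)$ are all log-free; the factor $(\log\la)^{1/2-1/r}$ enters solely through the fourth vertex $(4,4,4)$, where the sharp bound carries a $(\log\la)^{1/4}$ coming from the two-dimensional Carleson--Sj\"olin/H\"ormander oscillatory-integral theorem \cite{hoer2}, and is then spread over the tetrahedron by interpolation. Your time-block scheme cannot reproduce this: summing $\approx\la^2$ unit intervals with any fixed per-block bound produces a power of $\la$ rather than a logarithm, and the almost-orthogonality across blocks that you invoke is unavailable because $e^{ij\Delta}$ is not uniformly bounded on $L^p$ for $p\neq 2$ (indeed $\|e^{is\Delta}g\|_p$ grows like $|s|^{1/2-1/p}$ on frequency-$1$ data when $p>2$, by the same focusing that gives the lower bound in \S\ref{secondneccond}).
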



Again,  using the result in \S\ref{cflp} we obtain

\begin{corollary}\label{sobcor1} Suppose that $2\le r\le p\le q$, $\frac 1q+\frac 1r<\frac 12$,
or  $2\le p< r\le q$, $\frac 2q+\frac 1r<1-\frac 1p$.

\noindent Then $U: B^p_{\alpha,q}(\bbR)\to L^q(\R;L^r(I))$ is
bounded with $\alpha=1-\frac 1p-\frac 1q-\frac 2r$.
\end{corollary}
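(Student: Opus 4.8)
The plan is to combine the sharp single-scale bounds of Theorem~\ref{onedimp-prec} with a Littlewood--Paley summation, upgraded to the correct Besov index by real interpolation; this is precisely the mechanism of the result in \S\ref{cflp}. Throughout I take $p,q,r$ finite (the cases $q=\infty$ or $r=\infty$ are either vacuous or follow from the maximal-function version of the argument below).

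First I record the single-scale estimate. If $2\le r\le p\le q$ and $\tfrac1q+\tfrac1r<\tfrac12$, then $(p,q,r)$ lies in the second case of Theorem~\ref{onedimp-prec}(i); if $2\le p<r\le q$ and $\tfrac2q+\tfrac1r<1-\tfrac1p$, in the second case of part~(ii). In either situation $\fA_\la(p;q,r)\lesssim\la^{\,\alpha}$ with $\alpha=1-\tfrac1p-\tfrac1q-\tfrac2r$, uniformly for $\la\gg1$. Since $P_k$ has Fourier support in $\{|\xi|\sim2^k\}\subset\{2^k/5\le|\xi|\le 15\cdot2^k\}$, this yields
\[
\|UP_kf\|_{L^q(\bbR;L^r(I))}\ \lesssim\ 2^{k\alpha}\,\|P_kf\|_p\qquad(k\ge1),
\]
and, since the hypotheses are open conditions, the same holds with $q$ replaced by any $\tilde q$ near $q$ (keeping $p,r$ fixed) and $\alpha$ replaced by $\alpha(\tilde q)=1-\tfrac1p-\tfrac1{\tilde q}-\tfrac2r$. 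The term $k=0$ is controlled directly, using that $P_0f$ is band-limited and $q\ge p$.

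To sum over $k\ge1$: each $UP_kf$ is frequency-localized in $x$ to $\{|\xi|\sim2^k\}$, so, $L^r(I)$ being a UMD space for $2\le r<\infty$, the mixed-norm Littlewood--Paley inequality and Minkowski's inequality in $L^{\tilde q/2}(\bbR;L^{r/2}(I))$ give
\[
\Big\|\sum_{k\ge1}UP_kf\Big\|_{L^{\tilde q}(\bbR;L^r(I))}\ \lesssim\ \Big\|\Big(\sum_{k\ge1}|UP_kf|^2\Big)^{1/2}\Big\|_{L^{\tilde q}(\bbR;L^r(I))}\ \lesssim\ \Big(\sum_{k\ge1}\|UP_kf\|_{L^{\tilde q}(\bbR;L^r(I))}^2\Big)^{1/2},
\]
hence $U$ maps $B^p_{\alpha(\tilde q),2}(\bbR)$ boundedly into $L^{\tilde q}(\bbR;L^r(I))$ for all such $\tilde q$. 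Choosing $q_0<q<q_1$ in this range and real-interpolating --- with $\tfrac1q=\tfrac{1-\theta}{q_0}+\tfrac{\theta}{q_1}$, using $(B^p_{\alpha(q_0),2},B^p_{\alpha(q_1),2})_{\theta,q}=B^p_{\alpha(q),q}$ on the domain, $(L^{q_0}(\bbR;L^r(I)),L^{q_1}(\bbR;L^r(I)))_{\theta,q}=L^q(\bbR;L^r(I))$ on the target, and $\alpha(q)=(1-\theta)\alpha(q_0)+\theta\alpha(q_1)=\alpha$ --- produces $U\colon B^p_{\alpha,q}(\bbR)\to L^q(\bbR;L^r(I))$; adding back the low-frequency piece $UP_0f$ completes the proof.

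I expect the only substantive difficulty to be this last passage, from a single frequency scale to the Besov space with third index exactly $q$: square-function/orthogonality arguments by themselves deliver only the strictly smaller space $B^p_{\alpha,2}$, and one reaches $B^p_{\alpha,q}$ only by interpolating across the one-parameter family of such estimates --- which is why $q$ must lie in the interior of the admissible region and why the hypotheses carry strict inequalities. A related point of care is the boundary situation $q=p$ of the first regime (which forces $p>4$ and $r$ large), where one cannot interpolate below the constraint $\tilde q\ge p$ --- itself dictated by translation invariance, as in Proposition~\ref{necprop}(i) --- and must instead invoke the restriction estimate $R^*(p\to p)$ for the parabola in $\bbR^2$, as in the discussion around Theorem~\ref{besov-vs-ext}; the analogous remark applies to $q=r$ in the second regime. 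These refinements, together with the routine $k=0$ and $q,r=\infty$ bookkeeping, are exactly what \S\ref{cflp} supplies.
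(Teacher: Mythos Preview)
Your approach differs from the paper's. The paper's proof is a one-line application of Proposition~\ref{combin} (the $\sharp$-function result quoted from \cite{lerose}) to the single-scale bounds of Theorem~\ref{onedimp-prec}; that proposition needs only some $(p_0,q_0,r_0)$ with $q_0<q$, $r_0\le r$ and $1/p_0-1/q_0=1/p-1/q$ for which $\Gamma(p_0;q_0,r_0)<\infty$, and the strict inequalities in the hypotheses always leave room for such a choice. Your route --- vector-valued Littlewood--Paley in $x$ to reach $B^p_{\alpha(\tilde q),2}\to L^{\tilde q}(L^r)$, then real interpolation in $\tilde q$ to upgrade the third Besov index from $2$ to $q$ --- is a legitimate and more elementary alternative, and the interpolation identities you invoke are correct. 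It gives a clean self-contained proof whenever $q$ lies strictly inside the admissible $q$-range with $p,r$ fixed.

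The difficulty is exactly the one you identify: at $q=p$ in the first regime (and $q=r$ in the second) you cannot perturb $\tilde q$ downward, since translation invariance rules out any $L^p\to L^{\tilde q}(L^r)$ bound with $\tilde q<p$, and Theorem~\ref{onedimp-prec} says nothing about $\tilde q<r$. Your proposed patch --- appealing to $\text{R}^*(p\to p)$ via Theorem~\ref{besov-vs-ext} --- does not actually close this gap, because that theorem produces only $B^p_{\alpha,\nu}$ with $\nu\le 2$, not $\nu=p>2$. So at the boundary you are forced back onto \S\ref{cflp}, i.e.\ onto Proposition~\ref{combin} itself. But Proposition~\ref{combin} already covers the \emph{entire} region uniformly (for $q=p$ one takes $p_0=q_0$ slightly below $p$, with $r_0\le r$ chosen so that $(p_0,p_0,r_0)$ stays in the second case of Theorem~\ref{onedimp-prec}(i)), which is why the paper dispenses with your interior argument altogether. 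In short: your Littlewood--Paley plus real-interpolation scheme is correct where it applies, but it is strictly contained in what the $\sharp$-function mechanism of \S\ref{cflp} delivers, and it does not stand alone at the edge cases.
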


To compare these results, recall
that
$B^p_{\alpha,q_1}\subset B^p_{\alpha, q_2}$ for  $q_1<q_2$, that
$B^p_{\alpha,2}\subset L^p_\alpha\subset B^p_{\alpha,p}$ when $p\ge 2$, and that $B^p_{\alpha,p}$ is the same as the Sobolev--Slobodecki space
$W^{\alpha,p}$ when $0<\alpha<1$. In higher dimensions, if one singles out
the case  $p=q$,  one could hope to prove  the following
\begin{conjecture}\label{conj} Let $p\in[2,\infty)$, $r\in[2,\infty]$ satisfy
$\frac{d}{p}+\frac{1}{r}<\frac{d}{2}$  and
$\frac{2d+1}{p}+\frac{1}{r}< d$.
 Then $U: B^p_{\alpha,p}(\bbR^d)\to L^p(\R^d;L^r(I))$ is
bounded with $\alpha=d(1-\frac {2}p)-\frac 2r$.
\end{conjecture}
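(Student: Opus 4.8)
Since the final statement is a conjecture, what follows is a line of attack rather than a proof; the estimate is open in its stated generality and rests on several major conjectures in harmonic analysis. The plan is to separate it into a single-frequency bound and an endpoint Littlewood--Paley summation, and to identify the single-frequency bound, after interpolation in $r$, with the restriction, Bochner--Riesz and Schr\"odinger-maximal problems. Concretely, decompose $f=\sum_{k\ge0}P_kf$; since $U$ commutes with each $P_k$, the conjecture would follow from the frequency-localized estimate
\begin{equation*}
\|Uf\|_{L^p(\bbR^d;L^r(I))}\le C\,\la^{\,d(1-\frac2p)-\frac2r}\|f\|_p,\qquad \supp\widehat f\subset\{|\xi|\approx\la\},
\end{equation*}
together with an $\ell^p$-summation of the pieces $UP_kf$ in the mixed norm. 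By the explicit formula \eqref{expl} and parabolic rescaling, the frequency-localized estimate is equivalent to a mixed-norm bound for the extension operator of \eqref{ext}, roughly $\cE\colon L^p(B(0,1))\to L^p_\xi(\bbR^d;L^r_s(\bbR))$ on bounded sets; for $r=p$ this is $\mathrm R^*(p\to p)$, for $r=2$ it is a square function estimate for $e^{it\Delta}$, and for $r=\infty$ a maximal extension estimate. Complex interpolation in $r$ (which leaves the Besov index $p$ fixed) then reduces the frequency-localized estimate to the three boundary cases, sitting at the three ``hard'' corners of the conjectured region, namely $(p,r)=(\tfrac{2d}{d-1},2)$, $(p,r)=(\tfrac{2(d+1)}{d},\tfrac{2(d+1)}{d})$ and $(p,r)=(\tfrac{2d+1}{d},\infty)$, the remainder of the region following by interpolation with the trivial bounds near $p=\infty$.

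At the corner $r=p$ the frequency-localized estimate is the adjoint restriction estimate for the paraboloid in $\bbR^{d+1}$, so one would feed in the available partial results --- Tao's bilinear estimate and the Bourgain--Guth bounds cited above --- and the full corner is equivalent to the restriction conjecture. At the corner $r=2$, $\alpha=\alpha_{cr}(p;p,2)$, the estimate is, via \eqref{expl} and the arguments of \cite{lerose}, equivalent to a sharp square function estimate for Bochner--Riesz means in $\bbR^d$; partial results come from the known Bochner--Riesz theory and the endpoint is the Bochner--Riesz conjecture. At the corner $r=\infty$, $p=q$, the estimate is a global Carleson-type maximal bound, known for the range of $p$ covered by \cite{rose}, and the necessary conditions of Proposition~\ref{necprop} together with the Remark above show that the stated hypotheses on $p,r$ are essentially sharp.

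The main obstacle is twofold. First, each of the three corners is governed by a currently open conjecture, so unconditionally one recovers only the sub-ranges of $p$ permitted by the best available restriction, Bochner--Riesz and maximal estimates, exactly as in Corollary~\ref{SobCor2} and the discussion following it. Second --- and this is the essentially new difficulty, present even if one grants those conjectures --- the passage from the frequency-localized estimate to the Besov space $B^p_{\alpha,p}$ at the critical exponent $q=p$ requires an $\ell^p$ Littlewood--Paley summation of the pieces $UP_kf$ in $L^p(\bbR^d;L^r(I))$. These pieces have space-time frequencies in pairwise disjoint parabolic annuli $\{(\xi,|\xi|^2):|\xi|\approx 2^k\}$, but mere frequency separation yields only $\ell^2$ summation --- which is precisely why Theorem~\ref{besov-vs-ext} is restricted to $\nu\le2$ and why Corollary~\ref{SobCor2} must give up the endpoint $q>q_0$. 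Obtaining $\ell^p$ for $p>2$ would require using the transversality and curvature of these annuli --- for instance through a local-smoothing or square-function estimate adapted to the mixed norm $L^p(L^r)$ --- rather than orthogonality alone; equivalently, it amounts to upgrading $\nu\le2$ to $\nu=p$ in Theorem~\ref{besov-vs-ext}. In my view this endpoint summation is the crux of the conjecture.
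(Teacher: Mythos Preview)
The statement is labelled a \emph{Conjecture}; the paper does not prove it but only records partial results (from \cite{lerose}, \cite{rose}, \cite{bogu}) in restricted ranges of $p$. So there is no ``paper's own proof'' to compare against. Your proposal is therefore to be judged as a strategy, and on that level your reduction to frequency-localized estimates and your identification of the three hard corners --- restriction at $r=p=\tfrac{2(d+1)}{d}$, Bochner--Riesz at $(p,r)=(\tfrac{2d}{d-1},2)$, and a maximal-type estimate at $(p,r)=(\tfrac{2d+1}{d},\infty)$ --- are correct and align with the paper's Lemmata~\ref{resimpliesSchr}, \ref{Schrimpliesrestr} and \S\ref{secondbesch}. (The second hypothesis $\tfrac{2d+1}{p}+\tfrac1r<d$ is the strict form of necessary condition (iii) in Proposition~\ref{necprop}, specialized to $q=p$.)

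Your ``second main obstacle,'' however, is a genuine misreading. You assert that even granting the frequency-localized estimates, the $\ell^p$ summation into $B^p_{\alpha,p}$ is an independent difficulty, amounting to upgrading $\nu\le 2$ to $\nu=p$ in Theorem~\ref{besov-vs-ext}. This is not so. Proposition~\ref{combin} (the mechanism behind Corollary~\ref{SobCor2}) already delivers the Besov parameter $\nu=q$, not $\nu=2$: if $\Gamma(p_0;p_0,r_0)<\infty$ then $U:B^p_{\alpha,p}\to L^p(\bbR^d;L^r(I))$ for all $p>p_0$, $r\ge r_0$. The ``loss of the endpoint $q>q_0$'' is precisely what buys the upgrade from $\nu=2$ to $\nu=q=p$, via the $\sharp$-function argument of \cite{lerose}. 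Because the conjectured range carries \emph{strict} inequalities, for any target $(p,r)$ one may pick $(p_0,r_0)$ slightly smaller and still inside the range; nothing is lost. This is exactly how the paper establishes its partial cases: the proofs of Corollary~\ref{sobcor1} and Theorem~\ref{pop1} each consist of a frequency-localized estimate followed by a single invocation of Proposition~\ref{combin}, with no further summation argument. So, conditional on the corner conjectures, the $B^p_{\alpha,p}$ statement would follow immediately; the crux is the frequency-localized estimate itself, not the summation.
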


In \cite{lerose}, the conjecture was proven in the reduced range
$p\in(\frac{2(d+2)}{d},\infty)$, and for $d=1$ it was proven in the
range $p\in(4,\infty)$. In \cite{rose}, the conjecture was proven
for $p\in(\frac{2(d+3)}{d+1},\infty)$ with $r\ge p$ (see \cite{ro1}
for a nonendpoint version).

Theorem~\ref{onedimp-prec} also provides the negative part of the following corollary. The positive part was proven in \cite[Proposition
5.2]{lerose}.

\begin{corollary}\label{onedim}
Let $2\le p<\infty$.
 Then
$U: L^p(\bbR)\to L^p(\bbR;L^r(I))$
is bounded if and only if $r\le 2$.
\end{corollary}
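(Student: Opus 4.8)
Corollary \ref{onedim} asks us to characterize when $U : L^p(\bbR) \to L^p(\bbR; L^r(I))$ is bounded for $2 \le p < \infty$, and the claim is that this holds precisely when $r \le 2$. The positive direction ($r \le 2$) is quoted from \cite[Proposition 5.2]{lerose}, so the work is in the negative direction: we must show that $r > 2$ forces unboundedness. The plan is to reduce to frequency-localized estimates and apply the negative (lower bound) half of Theorem \ref{onedimp-prec}.

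First I would observe that if $U : L^p(\bbR) \to L^p(\bbR; L^r(I))$ were bounded, then in particular, by restricting to $f$ with $\supp \widehat f \subset \{\xi : \lambda/5 \le |\xi| \le 15\lambda\}$, we would get $\fA_\lambda(p;p,r) \le C$ uniformly in $\lambda \gg 1$, since the Sobolev/Fourier multiplier $(I-\Delta)^{\alpha/2}$ is, up to constants, just multiplication by $\lambda^\alpha$ on such functions, and here $\alpha = 0$ (there is no derivative loss in the statement). So it suffices to show $\fA_\lambda(p;p,r)$ is \emph{not} bounded as $\lambda \to \infty$ whenever $r > 2$. Now set $q = p$ in Theorem \ref{onedimp-prec}. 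Since $r > 2 = $ the lower endpoint and we are in the regime $p = q$, we should check which branch of the theorem applies: with $q = p \ge 2$ and $r > 2$, in case (i) (which requires $r \le p$) the relevant threshold is $\frac1q + \frac1r = \frac1p + \frac1r$ versus $\frac12$; in case (ii) (which requires $p < r$) the threshold is $\frac2q + \frac1r = \frac2p + \frac1r$ versus $1 - \frac1p$, i.e. $\frac3p + \frac1r$ versus $1$.

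The key point is that in \emph{either} branch the first (logarithmic, or $\lambda^{1/q - 1/r}$) case occurs exactly when $r$ is large relative to $p$, and then $\fA_\lambda(p;p,r)$ grows. Concretely: in case (i) with $r \le p$, if $\frac1p + \frac1r \ge \frac12$ then $\fA_\lambda(p;p,r) \approx \lambda^{1/p - 1/p}[\log\lambda]^{1/2 - 1/r} = [\log\lambda]^{1/2 - 1/r}$, which tends to $\infty$ precisely because $r > 2$; if instead $\frac1p + \frac1r < \frac12$ then $\fA_\lambda(p;p,r) \approx \lambda^{1 - 2/p - 2/r}$, and one checks $1 - \frac2p - \frac2r > 0$ is equivalent to $\frac1p + \frac1r < \frac12$, so again it blows up. Similarly in case (ii) with $p < r$: if $\frac2p + \frac1r \ge 1 - \frac1p$ then $\fA_\lambda(p;p,r) \approx \lambda^{1/p - 1/r}$, which $\to\infty$ since $r > p$; and if $\frac2p + \frac1r < 1 - \frac1p$ then $\fA_\lambda(p;p,r) \approx \lambda^{1 - 2/p - 2/r}$ with $1 - \frac2p - \frac2r > 0$ in this regime too. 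In all four subcases $\fA_\lambda(p;p,r) \to \infty$, contradicting boundedness. The only borderline to handle carefully is $r = 2$ itself, where $[\log\lambda]^{1/2 - 1/r} = [\log\lambda]^0 = 1$ stays bounded — consistent with the positive result — so the dichotomy is sharp.

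The main obstacle is really just bookkeeping: verifying that the exponent of $\lambda$ appearing in the "second case" of Theorem \ref{onedimp-prec} is strictly positive exactly on the complementary region to where the "first case" holds, so that the two cases glue into a single statement "$\fA_\lambda(p;p,r)$ is unbounded iff $r > 2$." This is an elementary computation with the linear inequalities $\frac1p + \frac1r$ vs $\frac12$ and $\frac3p + \frac1r$ vs $1$, together with the trivial observations $1/p - 1/r > 0 \Leftrightarrow r > p$ and $1 - 2/p - 2/r > 0 \Leftrightarrow 1/p + 1/r < 1/2$. Since $p \ge 2$ one always has $\frac1p \le \frac12$, so whenever $r > 2$ at least one of the "growing" alternatives must be active; there is no $(p,r)$ with $p \ge 2$, $r > 2$ falling through the cracks. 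Hence the reduction to frequency-localized pieces via the $\lambda^\alpha = 1$ normalization, combined with the lower bounds in Theorem \ref{onedimp-prec}, completes the negative direction, and together with \cite[Proposition 5.2]{lerose} this proves the corollary.
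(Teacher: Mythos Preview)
Your overall strategy is exactly the paper's: cite \cite[Proposition~5.2]{lerose} for $r\le 2$ and deduce the negative part from the frequency-localized lower bounds. For $2<r\le p$ your use of case~(i) of Theorem~\ref{onedimp-prec} with $q=p$ is correct.

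There is, however, a genuine gap in your handling of $r>p$. Case~(ii) of Theorem~\ref{onedimp-prec} requires $2\le p<r\le q$, so with $q=p$ the hypothesis $r\le q$ forces $r\le p$, contradicting $p<r$; you cannot invoke case~(ii) here at all, and the asserted asymptotics $\fA_\la(p;p,r)\approx \la^{1/p-1/r}$ are not supplied by the theorem. In particular the case $p=2$, $r>2$ falls entirely outside the scope of Theorem~\ref{onedimp-prec}.

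The fix is easy. Either observe that the lower bound you actually need, $\fA_\la(p;p,r)\gtrsim (\log\la)^{1/2-1/r}$, is precisely Proposition~\ref{kakeyaprop} (equivalently, Proposition~\ref{necprop}\,(v)), which is stated for \emph{all} $p,q,r\in[2,\infty)$ and immediately gives unboundedness for $r>2$; or, for $p>2$, use that boundedness at $r$ implies boundedness at any $r'\in(2,p]$ by H\"older on $I$, and then apply your case~(i) argument at $r'$. The first route also covers $p=2$, which the monotonicity reduction does not. With this correction your argument coincides with the paper's.
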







In two dimensions we can improve on the previously known  range in
$p$
if  $r$ is large; 
this  is closely related  to  results on  maximal operators for
$L^2_\alpha$ functions  (\cf. \cite{planch}, \cite{le1}, \cite{ro}, \cite{lerova}).
\begin{theorem}
\label{pop1} Let $\frac {16}{5}<p<\infty$ and $4\le r\le\infty$.
Then $U: B^p_{\alpha,p}(\bbR^2)\to L^p(\bbR^{2};L^r(I))$ is bounded
with $\alpha=2\big(1-\frac 2p\big)-\frac 2r$.
\end{theorem}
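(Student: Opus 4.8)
The plan is to deduce Theorem~\ref{pop1} from Corollary~\ref{SobCor2} (or more precisely from the machinery behind it in \S\ref{cflp}) by feeding in the best available diagonal restriction estimate in $\bbR^3$. Indeed, $d=2$ means the relevant paraboloid lives in $\bbR^{3}$, and the Bourgain--Guth exponent quoted after Corollary~\ref{SobCor2} gives $\text{\rm R}^*(p\to p)$ for $p\in(56/17,\infty)$ in two spatial dimensions. So first I would set $p_0=q_0$ to be any exponent slightly bigger than $56/17$ for which $\text{\rm R}^*(p_0\to p_0)$ holds. Since $56/17<16/5$, I have room: I want to apply Corollary~\ref{SobCor2} with this $q_0$, with $q=r=p$ on the diagonal wherever possible, and with the $L^r(I)$ norm for $r\ge q$ handled by the ``$r\ge q$'' clause of that corollary. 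The target regularity in Corollary~\ref{SobCor2} is $\alpha=d(1-\tfrac1p-\tfrac1q)-\tfrac2r$, which with $d=2$, $q=p$ becomes exactly $\alpha=2(1-\tfrac2p)-\tfrac2r$, matching the claim.

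The key steps, in order, are: (1) Record that $\text{\rm R}^*(p_0\to p_0)$ holds for all $p_0>56/17$ in the $d=2$ (i.e.\ paraboloid in $\bbR^3$) case, citing \cite{bogu} as done in the discussion after Corollary~\ref{SobCor2}. (2) Fix $p>16/5$; choose $p_0\in(56/17,\,\min\{p,16/5\})$ — this is possible precisely because $56/17<16/5$ — and set $q_0=p_0$. (3) Check the hypotheses of Corollary~\ref{SobCor2}: we need $q_0<q$, $q\le r\le\infty$, and $0\le\tfrac1p-\tfrac1q\le\tfrac1{p_0}-\tfrac1{q_0}$. Taking $q=p$ in the corollary, the last condition reads $0\le 0\le 0$, which holds; $q_0<p$ holds by choice of $p_0$; and $r\ge p$ is our hypothesis $4\le r\le\infty$ together with $p>16/5$... here one must be slightly careful since $r\ge q=p$ is required but we only assume $r\ge4$, so for $p>4$ one instead picks $q$ strictly between $q_0$ and $\min\{p,r\}$ and uses the freedom $\tfrac1p-\tfrac1q\le\tfrac1{p_0}-\tfrac1{q_0}$, recovering $\alpha$ by the stated formula. (4) Conclude $U:B^p_{\alpha,p}(\bbR^2)\to L^p(\bbR^2;L^r(I))$ with $\alpha=2(1-\tfrac2p)-\tfrac2r$.

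The one genuine point requiring care — and the main obstacle — is matching the exponents at the two ends of the range. For $16/5<p\le 4$ the hypothesis $4\le r$ already gives $r\ge p$, so Corollary~\ref{SobCor2} applies directly with $q=p$. For $p>4$ one has $r\ge4$ but possibly $r<p$, so the diagonal $q=p$, $r\ge q$ setup fails; instead one must either invoke the mixed-norm refinement promised in \S\ref{adjressect} (``more technical variants of Theorem~\ref{besov-vs-ext} which are valid for mixed norm spaces''), or observe that the $L^r(I)$ norm over the \emph{compact} interval $I$ with $r\ge4\ge$ (a fixed exponent) can be controlled after a Littlewood--Paley decomposition in time, reducing to the case one can handle; in any case the regularity bookkeeping must reproduce $\alpha=2(1-\tfrac2p)-\tfrac2r$ rather than $2(1-\tfrac2p)-\tfrac2p$. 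So the proof is essentially: (a) quote \cite{bogu} for $\text{\rm R}^*(p_0\to p_0)$, $p_0>56/17$; (b) feed it into the abstract transfer result of \S\ref{cflp}/Corollary~\ref{SobCor2}; (c) do the elementary but slightly fiddly exponent computation, splitting at $p=4$ or using the mixed-norm version, to land on the stated $\alpha$. I expect step (c) — reconciling the two regimes $r\le p$ and $r\ge p$ uniformly and getting the sharp $\alpha$ — to be where all the actual work lies; everything else is citation.
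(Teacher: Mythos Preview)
Your proposal contains a fatal arithmetic error that unravels the whole strategy. You write ``Since $56/17<16/5$, I have room,'' but in fact $56/17\approx 3.294$ while $16/5=3.2$, so $56/17>16/5$. The Bourgain--Guth diagonal estimate $\text{\rm R}^*(p_0\to p_0)$ is only available for $p_0>56/17$, and feeding this into Corollary~\ref{SobCor2} or Proposition~\ref{combin} can never reach exponents $p$ in the interval $(16/5,\,56/17]$. So the threshold $16/5$ in Theorem~\ref{pop1} cannot come from \cite{bogu}; you are citing the wrong input.

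The paper's proof in \S\ref{165sect} is genuinely different from what you outline. The number $16/5$ originates in the \emph{bilinear} $L^2$ space-time estimate of Lee--Rogers--Vargas \cite{lerova}, recorded as \eqref{lrv-result}: for $q>16/5$ and $r\ge 4$ one has $\sup_{\rho>1}\Lambda_{2,q,r}(N,\rho;2)\lc N^{2/q-2/r}$, where $\Lambda$ is a bilinear quantity measuring products $Uf_1\,Uf_2$ with $1$-separated frequency supports. This is combined with two new lemmata proved in \S\ref{165sect}: Lemma~\ref{2sthenps}, a localization argument that upgrades the $L^2$ bilinear bound to an $L^p$ bilinear bound at the cost of a power of $\rho$; and Lemma~\ref{billemma}, a Whitney-decomposition argument in the style of \cite{tavave} that converts bilinear bounds with separation back into the linear frequency-localized bound $\|U\psi(D/\la)f\|_{L^q(L^r)}\lc \la^{d(1-1/p-1/q)-2/r}\|f\|_p$. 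Only after this frequency-localized estimate is in hand does one invoke Proposition~\ref{combin} to pass to Besov data. Your step (c) difficulty with $r<p$ for $p>4$ is exactly why the mixed-norm bilinear machinery (with $q\le r$) is needed; the linear restriction route through Corollary~\ref{SobCor2} does not handle it, and in any case cannot reach $p$ below $56/17$.
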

The range in $r$   can be further improved for $16/5<p<4$, by
interpolating with the above mentioned $L^p(L^p(I))$ bounds for
$p>56/17$ (see \cite{bogu}) and  the $L^p(L^2(I))$ bounds
of~\cite{lerose} for $p>4$. Moreover one can obtain intermediate
$L^p_\alpha\to L^q(L^r(I))$ bounds with the critical $\alpha$ by
interpolating with the sharp $L^2\to L^q(L^r)$ bounds in
\cite{lerova}.



\medskip

{\it Organization of this  paper.}  In the following section, we
prove Theorem~\ref{besov-vs-ext} and related mixed norm results. In
\S \ref{necsect} we discuss necessary conditions to show Proposition
\ref{necprop} and the  lower bounds in Theorem \ref{onedimp-prec}.
The upper bounds are proven in \S \ref{d=1}. In \S \ref{fifth} we
detail how to combine the frequency localized pieces to obtain
estimates for Besov and Sobolev spaces, and in the final section we
prove Theorem~\ref{pop1}.

\medskip

{\it Notation.} By $m(D)$ we denote the convolution operator
with Fourier multiplier
$m$; that is to say $m(D)f=(m\widehat f\,)^\vee$.
For two nonnegative quantities $A$, $B$ the notation $A\lc B$ is used for $A\le CB$, with  some unspecified constant $C$.
We also use $A\approx B$ to indicate that $A\lc B$ and $B\lc A$.

\section{$L^p\to L^q(L^r(I))$
bounds and the adjoint restriction operator}
\label{adjressect}


We formulate a more technical version of Theorem \ref{besov-vs-ext}
that also applies to mixed-norm inequalities. In what follows let
\Be\label{annulus} \cA(\rho):=\{\xi\in \bbR^{d}: 3\rho\le|\xi|\le
12\rho\}\,. \Ee

\begin{theorem}\label{besovextequiv}
Let $p,q,r\in [2,\infty]$, $p\le q$,  $\beta> -d(\frac{1}{2}-\frac{1}{p})$
and $0<\nu\le 1$. Then
the inequality
\Be
\label{lqlrforext}
\sup_{\la> 1} \la^{-\beta} \sup_{\|f\|_p\le 1}
 \Big(\int_{\cA(\la)} \Big(\int_{\la}^{2\la}
  |\cE f(\tfrac{ s }{\la}\xi,  s )|^r d s  \Big)^{q/r} d\xi\Big)^{1/q}
<\infty\,
\Ee
holds if and only if for $\gamma = d(1-\frac 1p-\frac 1q)-\frac 2r+2\beta,
$
\Be\label{lqlrbesSchr}
\sup_{\|f\|_{B^p_{\gamma,\nu}} \le 1}
\Big\| \Big(\int_{-1}^1 |e^{it\Delta} f|^r dt\Big)^{1/r}\Big\|_q <\infty\,.
\quad
\Ee
If in addition $r<\infty$ this  equivalence remains valid for the range
$0<\nu\le 2$.

\end{theorem}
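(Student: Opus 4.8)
The plan is to establish the equivalence by an explicit change of variables in the oscillatory integral defining $Uf$, reducing the frequency-localized Schrödinger estimate to a rescaled adjoint restriction estimate on a single annulus, and then summing over dyadic annuli via the Besov norm. First I would reduce to the frequency-localized statement: by the remarks following Theorem \ref{besov-vs-ext} (or rather their mixed-norm analogue), the inequality \eqref{lqlrbesSchr} on $B^p_{\gamma,\nu}$ is equivalent to the family of uniform-in-$\la$ estimates
\[
\Big\| \Big(\int_{-1}^1 |e^{it\Delta} P_k f|^r dt\Big)^{1/r}\Big\|_q \lc 2^{k\gamma}\|P_k f\|_p, \qquad 2^k=\la,
\]
where the passage from the single-annulus bound to the full Besov bound uses the triangle inequality in $L^{q/r}$ together with $\nu\le 2$ (when $r<\infty$ one exploits $L^{q/r}$ being a genuine Banach space after raising to the power $r$; when $r=\infty$ one only gets $\nu\le 1$, which is the source of the dichotomy in the hypotheses). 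This is the step where the restriction on $\nu$ enters, and it is essentially Minkowski's inequality plus almost-orthogonality of the $P_k$, so it is routine.

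Next, for a fixed dyadic $\la$, I would write out $e^{it\Delta}P_k f(x)$ as in \eqref{form}, substitute $\xi = \la\eta$ with $|\eta|\approx 1$, and set $y$-variables to rescale the ball $|y|\le 1$ appropriately; after also rescaling $x$ and $t$ one sees that $e^{it\Delta}(P_kf)(x)$ restricted to the relevant space-time region is, up to harmless factors and a linear change of variables, a dilate of $\cE g$ evaluated along the surface $(\xi,s)\mapsto(\tfrac{s}{\la}\xi, s)$ appearing in \eqref{lqlrforext}, with $\|g\|_p$ comparable to $\la^{d/p'}\|P_kf\|_p$ (this is exactly the kind of scaling computation behind the explicit formula \eqref{expl} and the reasoning of \cite{ro1}). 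Tracking the Jacobian factors from the $x$-integration ($\la^{-d}$ raised to the appropriate power for the $L^q$ norm), the $t$-integration ($\la^{-2}$ for the $L^r$ norm), and the $L^p$ normalization, one finds precisely the exponent $\gamma = d(1-\tfrac1p-\tfrac1q) - \tfrac2r + 2\beta$. I would carry this out in both directions simultaneously, since the change of variables is invertible; the condition $\beta > -d(\tfrac12-\tfrac1p)$ is what guarantees the resulting power of $\la$ is summable / the local-to-global reduction is lossless, and it should drop out of the bookkeeping rather than requiring a separate argument.

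The main obstacle I anticipate is handling the curved evaluation surface $s\mapsto(\tfrac{s}{\la}\xi,s)$ in \eqref{lqlrforext} rather than a flat slice: the natural output of the change of variables in $e^{it\Delta}$ gives $\cE f$ evaluated on $(\xi,s)$ with $s\approx\la$ and $\xi$ ranging over the annulus, but the $t$-variable becomes coupled to the spatial variable through the $\exp(i|x-y|^2/4t)$ kernel in \eqref{expl}, producing the factor $s/\la$ multiplying $\xi$. One must check that integrating over $s\in[\la,2\la]$ (a unit-length interval after rescaling) against this tilted slice is genuinely equivalent to the fixed-time estimate $\text{R}^*(p\to q)$, i.e. that no curvature is lost; this is where one invokes that $\cE f(\cdot, s)$ for $s$ in a bounded interval has essentially the same mapping properties as $\cE f(\cdot, s_0)$, using the semigroup/modulation structure $\cE f(\xi,s) = e^{is\Delta}(\widecheck{f\mathbf 1_{B_1}})$ evaluated suitably, so that the $L^r$ integration in $s$ is comparable to a single evaluation. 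Once that equivalence of the tilted-slice and flat-slice formulations is in hand, the remaining steps are the scaling computation and the Besov summation described above, both of which are mechanical.
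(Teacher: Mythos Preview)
Your overall architecture matches the paper's, but there are genuine gaps in two places.

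First, your mechanism for summing the dyadic pieces when $1<\nu\le 2$ is wrong. Raising to the $r$th power and invoking the Banach property of $L^{q/r}$ does not separate the terms in $\int_{-1}^1 |\sum_k UP_k f|^r\,dt$, since the sum sits inside the absolute value; nor is spatial almost-orthogonality of the $P_k$ what is used. The correct device is \emph{temporal}: after multiplying by a time cutoff $\chi(t)$, the time-Fourier support of $\chi(t)\,UP_kf$ lies where $|\tau|\approx 2^{2k}$, so one applies dyadic Littlewood--Paley projections in the $t$-variable to pass from $\big|\sum_k \cdot\big|$ to the square function $\big(\sum_k|\cdot|^2\big)^{1/2}$, and only then uses the triangle inequality in $L^{q/2}(L^{r/2})$. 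This is precisely why $r<\infty$ is required for $\nu\in(1,2]$, and it is not routine Minkowski.

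Second, the condition $\beta>-d(\tfrac12-\tfrac1p)$ does not ``drop out of the bookkeeping''. The hypothesis \eqref{lqlrforext} at scale $\la$, after parabolic rescaling, controls $e^{it\Delta}\psi(\tfrac{D}{\la})f$ only for $t\in[\tfrac12,1]$ (the range $s\in[\la,2\la]$ rescales to this dyadic interval). To reach $t\in[-1,1]$ as in \eqref{lqlrbesSchr} one must sum the rescaled estimates over subintervals $[2^{-l-1},2^{-l}]$, and the resulting geometric series converges precisely when $\gamma>d(\tfrac1p-\tfrac1q)-\tfrac2r$, equivalently $\beta>-d(\tfrac12-\tfrac1p)$; the paper isolates this as a separate scaling lemma. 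A smaller but related point: in the direction \eqref{lqlrforext}$\Rightarrow$\eqref{lqlrbesSchr} the Schr\"odinger kernel carries an amplitude $\psi$ that obstructs a bare change of variables. The paper handles this by a stationary phase expansion of the kernel followed by a Fourier series expansion of the resulting amplitude, landing directly on $\cE f(\tfrac{s}{\la^2}\xi,s)$ over the tilted region; your suggestion that the $L^r_s$-integral is ``comparable to a single evaluation'' is not how the argument runs and would discard the mixed-norm structure that the theorem is designed to capture.
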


Taking Theorem \ref{besovextequiv} for granted we can quickly give

\begin{proof}[Proof of Theorem \ref{besov-vs-ext}]
 By Theorem
\ref{besovextequiv} we just have to show that $\text{\rm R}^*(p\to
q)$ is equivalent with \eqref{lqlrforext} for large $\la$, in the
case $q=r$ and $\beta=0$. Clearly the latter is implied by
$\text{\rm R}^*(p\to q)$; this follows by a change of variables
$(\eta, s )= ( s \lambda^{-1}\xi,  s )$ which has Jacobian
bounded above and below in the  region where $ s \approx\lambda$.

{\it Vice versa},  supposing that \eqref{lqlrforext} holds in the case $q=r$ and
$\beta=0$, by the change of variables, we have that $\cE: L^p(\bbR^d)\to
L^q(W_\lambda),$ where
$$W_\lambda=\{\,(\xi, s )\, :\,  s \in [\lambda,2\lambda],\quad x\in \cA( s )\,\}.$$ For
$\omega\in \bbR^{d+1}$ define $f^\omega(y)= e^{i \inn \omega y
-i\omega_{d+1}|y|^2} f(y)$ and observe that $\cE f^\omega=\cE
f(\cdot-\omega)$. Thus using  a finite number of translations we see
that
  $\cE: L^p(\bbR^d)\to L^q(B_\la)$,  where $B_\lambda$ is the ball in $\bbR^{d+1}$
of radius  $\lambda$ centred at the origin, and the operator norm
is uniformly bounded in $\lambda$. Letting $\lambda\to\infty$ yields
$\text{\rm R}^*(p\to q)$.
\end{proof}

We now proceed to prove Theorem \ref{besovextequiv}.

\begin{lemma} \label{resimpliesSchr} Let $p,q,r\in [2,\infty]$ with $p\le
q$ and let $\la\gg 1$.
Suppose that
 \Be\label{adjrestrassu}
\Big(\int_{\cA(\la^2)} \Big(\int_{\la^2}^{2\la^2}
  |\cE f(\tfrac{ s }{\la^2}\xi,  s )
|^r d s \Big)^{q/r} d\xi\Big)^{1/q} \le A\|f\|_p \Ee holds. Then, for  $\psi\in C^\infty_c$ with support in
$\{\xi:4<|\xi|<5\}$,
\begin{equation}\label{fixedfr}
\Big\|\Big(\int_{1/2}^1| e^{it\Delta} \psi(\tfrac D\la ) f|^r dt\Big)^{1/r}
 \Big\|_q\lc A  \la^{\alpha} \|f\|_{p}\,,\quad  \alpha = d\big(1-\tfrac 1p-\tfrac 1q\big)-\tfrac 2r.
\end{equation}

\end{lemma}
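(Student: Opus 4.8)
The idea is to convert the fixed-frequency Schr\"odinger quantity on the left of \eqref{fixedfr} into the extension-operator quantity appearing in \eqref{adjrestrassu} by an appropriate change of variables, exploiting the explicit kernel \eqref{expl}. First I would write $g = \psi(D/\la)f$, so that $\widehat g$ is supported in $\{\xi : 4\la < |\xi| < 5\la\}$, and decompose $g$ into pieces living on cubes of side $\approx \la$ in physical space; by translation invariance of the estimate and an $\ell^p \hookrightarrow \ell^q$ (or almost-orthogonality) argument together with $p\le q$ it suffices to treat a single such piece, i.e.\ to assume $g$ is (essentially) supported in a ball of radius $\approx\la$. For such $g$, and for $t\in[1/2,1]$, $x$ in a ball of radius $\approx\la$, the formula \eqref{expl} gives
\begin{equation*}
e^{it\Delta}g(x) = \frac{1}{(4\pi i t)^{d/2}} e^{i|x|^2/4t}\int e^{-i\inn x y/2t} e^{i|y|^2/4t} g(y)\,dy .
\end{equation*}

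Next I would rescale. Put $y = \la z$ so that $h(z) := g(\la z)$ is supported in a unit ball, and set the new time variable $s$ so that $s/(4t)$ matches the quadratic phase $|z|^2$-coefficient after scaling; concretely the substitution $t = \la^2/(4s)$ (up to harmless constants) sends $t\in[1/2,1]$ to $s$ in an interval $\approx[\la^2,2\la^2]$, and simultaneously one reads off the spatial frequency $\xi = x\,s/\la^2$ (up to constants), which for $x$ in the $\la$-ball and $s\approx\la^2$ lands in the annulus $\cA(\la^2)$. After the substitutions, the modulus $|e^{it\Delta}g(x)|$ becomes exactly $\la^{-d}\,|\cE h(\tfrac{s}{\la^2}\xi, s)|$ up to a constant, and I would carefully track the Jacobians: $dy = \la^d\,dz$ in the definition of $h$; $dt = c\,\la^2 s^{-2}\,ds$ with $s^{-2}\approx \la^{-4}$, so that $\big(\int_{1/2}^1 |\cdots|^r dt\big)^{1/r} \approx \la^{(2-4)/r}\big(\int_{\la^2}^{2\la^2}|\cdots|^r ds\big)^{1/r} = \la^{-2/r}(\cdots)$; and $dx = \la^d (s/\la^2)^{-d}\,d\xi \approx \la^{-d}\,d\xi$ in the outer $L^q$ integral, contributing $\la^{-d/q}$. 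Collecting these with the $\la^{-d}$ from the kernel and the normalisation $\|h\|_p = \la^{-d/p}\|g\|_p$, and using the hypothesis \eqref{adjrestrassu} applied to $h$, the powers of $\la$ assemble to $-d + d/q' \cdot(\text{from }d\xi) - \ldots$; a bookkeeping check gives the claimed exponent $\alpha = d(1-\tfrac1p-\tfrac1q) - \tfrac2r$. Finally, summing the single-cube estimates back up (again using $p\le q$ so that the $\ell^q$ norm of the pieces is dominated by the $\ell^p$ norm) recovers \eqref{fixedfr} for the original $f$, since $\|g\|_p = \|\psi(D/\la)f\|_p \lc \|f\|_p$.

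The main obstacle is the decomposition/reassembly step rather than the change of variables: a priori the substitution $t = \la^2/(4s)$ identifies $|e^{it\Delta}g(x)|$ with $|\cE h|$ only \emph{when $g$ is localised to a ball of radius $O(\la)$}, because otherwise the quadratic factor $e^{i|y|^2/4t}$ is not a bounded multiple of $e^{is|z|^2}$ on the relevant support — one genuinely needs $|y|\lesssim\la$, i.e.\ $|z|\lesssim 1$, matching the cutoff $|y|\le 1$ in \eqref{ext}. So one must first break $g$ into $O(\la)$-balls, handle each by the rescaled restriction hypothesis, and then sum. Since the frequency support of $g$ has size $\approx\la$, the physical-space pieces on a grid of mesh $\la$ are almost orthogonal in $L^2$, and interpolating this with the trivial $\ell^1\to\ell^\infty$ bound (or invoking a standard $L^p$ almost-orthogonality lemma for such frequency-localised functions) gives, for $2\le p\le q$, the needed inequality $\big(\sum_\nu \|\,\cdot\,\text{on cube }\nu\|_q^q\big)^{1/q}$-type control; I would phrase this as: the operator norm is unchanged up to a constant if we assume $\supp g$ in a fixed $\la$-ball, by translating the estimate and summing with $p\le q$. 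Everything else — the kernel formula, the Jacobian computation, the exponent count — is routine once this reduction is in place.
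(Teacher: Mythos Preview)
Your outline is in the right spirit (spatial localization at scale $\la$, explicit kernel, rescale to match $\cE$, reassemble via $p\le q$), but there is a genuine gap together with a scaling slip. After your change of variables the hypothesis \eqref{adjrestrassu} only controls the portion of the $L^q_x$-integral where $\xi\in\cA(\la^2)$, i.e.\ where $x$ lies in one specific annulus of scale $\la$; you never explain why the contribution from the complementary $x$-region is negligible. This is not automatic once you have truncated $g=\psi(D/\la)f$ to a cube: the pieces $g\chi_{Q_\nu}$ have lost the compact frequency support, and the bare propagator $e^{it\Delta}$ has no kernel decay on its own, so nothing in your argument forces $e^{it\Delta}g_\nu$ to concentrate on that annulus. (The same missing output-localization is what would give bounded overlap of the outputs and make your reassembly legitimate.) On the scaling: with $y=\la z$ and $s=\la^2/(4t)$ the linear phase becomes $\la\langle x,z\rangle/(2t)=(2s/\la)\langle x,z\rangle$, so matching $\cE h\big(\tfrac{s}{\la^2}\xi,s\big)$ forces $\xi=2\la x$, not $\xi=xs/\la^2$; your formula gives $|\xi|\approx\la$ for $|x|\approx\la$, which misses $\cA(\la^2)$ by a factor of $\la$.

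The paper supplies exactly the missing localization, and does so by decomposing $f$ rather than $g$, keeping the frequency cutoff on the operator side. Integration by parts on the kernel $K^\la_t=\cF^{-1}[\psi(\cdot/\la)e^{-it|\cdot|^2}]$ gives $|K^\la_t(x)|\le C_N|x|^{-N}$ for $|x|\ge 11\la$, which both localizes each output to a ball of radius $\approx 11\la$ (handling the reassembly with bounded overlap) and reduces matters to $|x|\le 11\la$. A stationary phase expansion then writes $K^\la_t=H^\la_t+E^\la_t$ with $|E^\la_t|\lc\la^{-L}$ and $H^\la_t(x)=(4\pi it)^{-d/2}e^{-i|x|^2/4t}\sum_{\nu=0}^M\la^{-\nu}\psi_\nu\big(\tfrac{x}{2\la t}\big)$; the amplitude $\psi_\nu$, supported where $4<|x|/(2\la t)<5$, is precisely what pins $x-y$ to the annulus that rescales into $\cA(\la^2)$. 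That amplitude is finally removed by Fourier-expanding $\psi_\nu$ (each exponential absorbed as a harmless translate of $\xi$), after which the hypothesis applies directly. So the ingredient your plan lacks is this amplitude-producing stationary phase step, or some equivalent argument that the frequency cutoff forces the output into the correct $x$-annulus.
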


\begin{proof}[Proof]
If $f_\la$ is the characteristic function  of a ball of radius $(100
\la)^{-2}$ then $|\cE (f_\la)(\tfrac{ s }{\la^2}\xi,  s )|\ge
\la^{-2d}$ for $(\xi, s )\in \cA(\la^2)\times [\la^2,2\la^2]$. The
resulting lower bound $A\ge c \la^{2d(-1+ 1/p+ 1/q)+2/r}$ (which  is
far from being sharp) will be used repeatedly to dominate certain
error terms which decay fast in $\la$.

The convolution kernel for $e^{it\Delta}\psi(\tfrac D\la )$ can be written
as
\begin{align*}K^\la_t(x)
\,=\,\Big(\frac{\la}{2\pi}\Big)^d \int \psi(\xi)\, e^{-it\la^2|\xi|^2+i\la\inn x\xi} d\xi.
\end{align*}
By integration by parts it follows that   \Be \label{offexcset}
|K^\la_t(x)| \le C_N|x|^{-N},  \quad\text{for $|x|\ge 11\la$.} \Ee
Hence, by a standard argument, \eqref{fixedfr} reduces to showing the
inequality \Be\label{lqlr2}\Big(\int_{|x|\le 11 \la}
\Big(\int_{1/2}^1 |K^\la_t*f|^r dt\Big)^{q/r} dx\Big)^{1/q}
 \lc A \la^{\alpha} \|f\|_p, \quad  \alpha =  d\big(1-\tfrac 1p-\tfrac 1q)-\tfrac 2r
\Ee for $f$ supported in the cube of sidelength $\la(2d)^{-1}$
centred at the origin. Indeed, suppose  that \eqref{lqlr2} is
verified, let $\fQ_\la=\{Q\}$ be a grid of cubes with
 sidelength $\la(2d)^{-1}$, and centres $x_Q$, and let $B_Q$ be the ball of radius $11\la$ centred at $x_Q$.
Then we may estimate the $L^q(\mathbb R^d; L^r([1/2,1]))$ norm of
$e^{it\Delta}\psi(\tfrac D\la )$ by
 \Be \label{twoterms}
\begin{aligned}
&\Big(\int \sum_Q \chi_{B_Q}(x) \Big(\int_{1/2}^1
|K^\la_t*[f\chi_Q](x)|^r dt\Big)^{q/r} dx\Big)^{1/q}
\\ &\qquad+
\Big(\int \sum_Q \chi_Q(x) \Big(\int_{1/2}^1
|K^\la_t*[f\chi_{\bbR^d\setminus B_Q}](x)|^r dt\Big)^{q/r}
dx\Big)^{1/q}
\end{aligned}
\Ee by Minkowski's inequality in $L^r$. We use  the finite overlap
of the balls, the translation invariance of the operators  and
\eqref{lqlr2} to estimate
the first term  by 
\begin{align*}
CA\la^\alpha\Big(\sum_Q\|f\chi_Q\|_p^q\Big)^{1/q} \lc CA\la^\alpha
\|f\|_p
\end{align*}
where for the last inequality we have used the assumption $p\le q$.
For the second term in \eqref{twoterms} we use \eqref{offexcset}
with $N>2d$ and then Young's inequality
 to bound it by
\begin{align*}&
C\Big(\int \Big[
 \int_{|w|\ge 10\la}
|w|^{-N}|f(x-w)|dw\Big]^q dx\Big)^{1/q} \lc \la^{-N+d(1-\frac
1p+\frac 1q)}\|f\|_p \lc A\la^{\alpha}\|f\|_p.
\end{align*}
We used the trivial  lower bound for $A$ in the last step.

Our task is now to prove \eqref{lqlr2}.
We use
a   stationary phase calculation
to see that
$K^\la_t= H^\la_t+E_t^\la$, where
$$H^\la_t(x) =  \frac{e^{-i|x|^2/4t}}{(4\pi it)^{d/2}}
\sum_{\nu=0}^M\psi_\nu\big(\frac{x}{2\la t}
\big)\la^{-\nu}$$
and $$|E_\la(x,t)|\le C_L \la^{-L}$$
where we choose $L\gg d$. For the leading term $\psi_0=\psi$, and  the functions $\psi_\nu$
are obtained by letting certain differential operators act on $\psi$; thus $\psi_\nu(w)=0$ for $|w|\le 4$ and
$|w|\ge 5$.

For the error term we use a trivial bound
$$\Big(\int_{|x|\le 11 \la} \Big(\int_{1/2}^1
\Big[\int |E_\la(x-y,t)| |f(y)| \,dy\Big]^rdt\Big)^{q/r}dx\Big)^{1/q} \lc
 \la^{d-L} \|f\|_p\,\lc\, A\la^\alpha \|f\|_p\,.
$$ For the oscillatory terms we have to  prove the inequality
\Be\label{lplp3}\Big( \int_{|x|\le 11 \la} \Big(\int_{1/2}^1
\Big|\int \psi_\nu \big(\frac{x-y}{2\la t}\big)
\exp\big(i\frac{|x-y|^2}{4t}\big) f(y) \,dy\Big|^rdt\Big)^{q/r}
dx\Big)^{1/q} \lc A \la^{\alpha}   \|f\|_p \Ee whenever $f$ is supported in
$\{|y|\le \la/2\}$. By a change of variable $t\mapsto u=1/t$ (with
$u\approx t\approx 1$) and the support properties for $\psi_\nu$
this  follows from
\begin{align}\label{lplp3b}\Big(\int_{\frac 72\la\le |x|\le \frac{21}{2} \la}\Big( \int_{1}^2
\Big|\int_{|y|\le \la/2} \psi_\nu\big(\frac{u(x-y)}{2\la}\big)
&\exp\big(i\frac u 4 (|y|^2-2\inn xy)\big)
\\&\times f(y) dy\Big|^{r}du\Big)^{q/r}dx \Big)^{1/q}
 \lc
A \la^{\alpha}   \|f\|_p \nonumber
\end{align}
whenever $f$ is supported in
$\{|y|\le \la/2\}$. We now use a parabolic
scaling in the $(x,u)$-variables  and set
$$x= \la^{-1}w,\quad u= \la^{-2} s;  \qquad y= 2\la z.$$
The
previous inequality becomes
\begin{multline}
\label{lplp3d}
\Big(\int_{\frac72\la^2\le |w|\le \frac{21}{2} \la^2} \Big(\int_{\la^2}^{2\la^2}
\Big|\int_{|z|\le 1} \psi_\nu\big(\frac{sw-2\la^2sz}{2\la^4}\big)
\\ \times\exp(i (s|z|^2-\inn {\frac{sw}{\la^2}}{z})) f(2\la z) (2\la)^d\,dz\Big|^r
\frac{ds} {\la^{2}}
\Big)^{q/r} \frac{dw}{\la^{d}}\Big)^{1/q}  \lc A \la^{\alpha}  \|f\|_p.
\end{multline}
We have the  Fourier series expansion
$\psi_\nu(x) =\sum_{\ell\in \bbZ^d} c_{\ell,\nu} e^{i\inn \ell x}$
for
$x\in [-\frac{9}{10}\pi,\frac 9{10}\pi]^d$ and for each $\nu$  the Fourier coefficients are rapidly decaying,
$|c_{\ell,\nu}|\le C_{N,\nu} (1+|\ell|)^{-N}$. Thus
$$
\psi_\nu\big(\frac{sw-2\la^2sz}{2\la^4}\big)
 =
\sum_\ell c_{\ell,\nu} e^{i\la^{-4}\inn {sw}\ell/2}
e^{-i \la^{-2} s\inn z\ell}.$$

Using Minkowski's inequality for the sum   and the rapid decay of the Fourier coefficients
 the previous inequality \eqref{lplp3b} follows
  from
\begin{align}
\label{lplp3e} \Big(\int_{\frac72\la^2\le |w|\le \frac{21}{2} \la^2}
\Big(\int_{\la^2}^{2\la^2} &\Big|\int_{|z|\le 1} \exp(i (s|z|^2-\inn
{\frac{s(w+\ell)}{\la^2}}{z} )) f(2\la z) \,dz\Big|^r {ds}
\Big)^{q/r} dw\Big)^{1/q} \\ &\lc (1+|\ell|)^M A\la^{\alpha-d+\frac
2r+\frac dq}\|f\|_p. \nonumber
\end{align}
The left hand side is trivially bounded by $C \la^{2/r+2d/q}$ and
therefore the displayed inequality holds for  $|\ell|\ge \la^2/4$.
If $|\ell|\le \la^2/4$, we change variables and
 see that for  \eqref{lplp3e}  we only need to show
\begin{align*}
\Big(\int_{3\la^2\le |w|\le 11 \la^2}
\Big(\int_{\la^2}^{2\la^2} \Big|\int_{|z|\le 1} \exp(i (s|z|^2-\inn
{\frac{sw}{\la^2}}{z} )) &g(z) \,dz\Big|^r {ds} \Big)^{q/r}
dw\Big)^{1/q} \\ &\lc A\la^{\alpha-d+\frac 2r+\frac
dq}\la^{d/p}\|g\|_p. \nonumber
\end{align*}
The right hand side is just $ A \|g\|_p$, so that this would  follow from
\eqref{adjrestrassu}.
\end{proof}

\begin{lemma} \label{Schrimpliesrestr} Let $p,q,r\in [2,\infty]$ and $\lambda\gg 1$.
 Let $2<a_0<a_1$ and let $\eta$ be a radial $C^\infty_c$ function which satisfies $\eta(\xi)=1$ for
$\frac{a_0-2}{4}\le |\xi|\le 2(a_1+2)$.
  Suppose
\Be\label{lqlrschr2}
\sup_{\|f\|_p\le 1} \Big\| \Big(\int_{1/2}^1 |e^{it\Delta}\eta(\tfrac D\la ) f|^r dt\Big)^{1/r}\Big\|_q \le
B\,.
\Ee
Then
\Be
\label{lqlrext2}
\Big(\int_{a_0\la^2\le |\xi|\le a_1\la^2}\Big(\int_{\la^2}^{2\la^2}
\big|\cE f \big(\tfrac{ s }{\la^2} \xi,  s \big)\big|^r
d s \Big)^{q/r}d\xi\Big)^{1/q}\lc B \la^{-d+\frac dp+\frac dq+\frac
2r } \|f\|_p. 
\Ee
\end{lemma}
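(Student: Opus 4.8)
The idea is to run the parabolic rescaling of Lemma~\ref{resimpliesSchr} in reverse. Fix $f$ with $\|f\|_p\le 1$; since $\cE f=\cE(f\chi_{\{|y|\le 1\}})$ we may assume $f$ is supported in $\{|y|\le 1\}$. Writing out $|x-y|^2$ in the explicit formula \eqref{expl} and matching with \eqref{ext}, a direct computation gives the exact identity
\[
\cE f\big(\tfrac{s}{\la^2}\xi,\,s\big)=(4\pi i t)^{d/2}\,e^{-i|x|^2/4t}\,e^{it\Delta}f(x),\qquad t=\tfrac1{4s},\quad x=\tfrac{\xi}{2\la^2}.
\]
For $s\in[\la^2,2\la^2]$ we have $t\approx \la^{-2}$, so we rescale: with $F(w):=f(w/2\la)$, supported in $\{|w|\le 2\la\}$, the dilation identity $e^{it\Delta}f(x)=(e^{i\tau\Delta}F)(2\la x)$ holds with $\tau:=4\la^2 t=\la^2/s\in[\tfrac12,1]$, hence (using $2\la x=\xi/\la$ and $(4\pi t)^{d/2}=\pi^{d/2}\la^{-d}\tau^{d/2}$)
\[
\big|\cE f\big(\tfrac{s}{\la^2}\xi,\,s\big)\big|=\pi^{d/2}\la^{-d}\tau^{d/2}\,\big|(e^{i\tau\Delta}F)(\xi/\la)\big|.
\]
Substituting $\tau=\la^2/s$ in the $s$-integral (so $ds=\la^2\tau^{-2}d\tau$, with $\tau\in[\tfrac12,1]$, the powers of $\tau$ being bounded) and $\sigma=\xi/\la$ in the $\xi$-integral, the left-hand side of \eqref{lqlrext2} is bounded by
\[
C\,\la^{-d+\frac 2r+\frac dq}\,\Big\|\Big(\int_{1/2}^1\big|e^{i\tau\Delta}F\big|^r\,d\tau\Big)^{1/r}\Big\|_{L^q(\{a_0\la\le|\sigma|\le a_1\la\})},\qquad C=C(d,r).
\]

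Now split $F=\eta(D/\la)F+(1-\eta(D/\la))F=:F_1+F_2$. For the $F_1$-part, enlarge the region of integration to all of $\bbR^d$, apply the hypothesis \eqref{lqlrschr2} with $F$ in place of $f$, and use $\|F\|_p=(2\la)^{d/p}\|f\|_p$:
\[
\la^{-d+\frac 2r+\frac dq}\Big\|\Big(\int_{1/2}^1\big|e^{i\tau\Delta}\eta(\tfrac D\la)F\big|^r\,d\tau\Big)^{1/r}\Big\|_q\le B\,\la^{-d+\frac 2r+\frac dq}\|F\|_p\lc B\,\la^{-d+\frac dp+\frac dq+\frac 2r}\|f\|_p,
\]
which is exactly the bound \eqref{lqlrext2}. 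It remains to show that $F_2$ contributes a negligible error, and this is the step where the precise annulus $\tfrac{a_0-2}4\le|\xi|\le 2(a_1+2)$ on which $\eta\equiv 1$ is used.

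The operator $e^{i\tau\Delta}(1-\eta(D/\la))$ has convolution kernel $K^{\tau,\la}(z)=(2\pi)^{-d}\int(1-\eta(\zeta/\la))\,e^{-i\tau|\zeta|^2+i\langle z,\zeta\rangle}\,d\zeta$, and in $e^{i\tau\Delta}F_2(\sigma)$ with $|\sigma|\in[a_0\la,a_1\la]$ we only need $K^{\tau,\la}(z)$ for $z=\sigma-w$, $w\in\supp F$, i.e.\ for $(a_0-2)\la\le|z|\le(a_1+2)\la$ (here $a_0>2$ is used). For such $z$ and $\tau\in[\tfrac12,1]$ the unique, nondegenerate critical point $\zeta_c=z/(2\tau)$ of the phase lies in $\{\tfrac{a_0-2}{2}\la\le|\zeta|\le(a_1+2)\la\}$, hence at distance $\gc\la$ from the boundary of $\{\tfrac{a_0-2}4\la\le|\zeta|\le 2(a_1+2)\la\}$, the set on which $1-\eta(\cdot/\la)$ vanishes. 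Consequently $|\nabla_\zeta(-\tau|\zeta|^2+\langle z,\zeta\rangle)|=2\tau|\zeta-\zeta_c|\gc\la$ on all of $\supp(1-\eta(\cdot/\la))$ (and $\gc|\zeta|$ for large $|\zeta|$), and repeated non-stationary-phase integration by parts, using $|\partial^\alpha\eta(\cdot/\la)|\lc\la^{-|\alpha|}$, gives $|K^{\tau,\la}(z)|\le C_N\la^{-N}$ for every $N$, uniformly in $\tau$. Hence $|e^{i\tau\Delta}F_2(\sigma)|\le C_N\la^{-N}\|F\|_1\lc C_N\la^{-N+d}\|f\|_p$; after integrating over $\{a_0\la\le|\sigma|\le a_1\la\}$ and over $\tau\in[\tfrac12,1]$, the $F_2$-contribution to the left-hand side of \eqref{lqlrext2} is $\lc C_N\la^{-N+2d/q+2/r}\|f\|_p$, which, since $B$ admits a crude lower bound polynomial in $\la$ (exactly as for $A$ in Lemma~\ref{resimpliesSchr}), is absorbed into the main term once $N$ is large. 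When one of $p,q,r$ equals $\infty$, the corresponding integral is replaced by a supremum and nothing changes.

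The main — and essentially only — obstacle is this error term: $F$ is a compactly supported but merely $L^p$ function, hence not frequency localized, so one cannot simply replace it by $\eta(D/\la)F$. What saves the argument is that $e^{i\tau\Delta}F$ is evaluated at distance $\approx\la$ from $\supp F$, which by stationary phase concentrates the relevant frequencies in the annulus where $\eta\equiv1$, so that the complementary piece is killed by non-stationary phase; the numerology $\tfrac{a_0-2}4,\ 2(a_1+2)$ is precisely the slack needed to accommodate $\tau\in[\tfrac12,1]$ and $|w|\le 2\la$. All the remaining steps are bookkeeping of scaling exponents together with Minkowski's inequality.
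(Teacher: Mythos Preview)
Your proof is correct and follows essentially the same route as the paper's: both invert the parabolic rescaling to rewrite $\cE f(\tfrac{s}{\la^2}\xi,s)$ as a rescaled Schr\"odinger evolution of a function supported in $\{|w|\le 2\la\}$, then split via $\eta(D/\la)$ and $I-\eta(D/\la)$, handling the main term by the hypothesis and the error by non-stationary phase on the region $(a_0-2)\la\le|z|\le(a_1+2)\la$. The only cosmetic differences are that the paper carries out the change of variables step by step rather than via the explicit kernel formula, decomposes $1-\eta$ dyadically to make the (non-absolutely-convergent) kernel integral rigorous, and cites the focusing example of \S\ref{secondneccond} (rather than the bump of Lemma~\ref{resimpliesSchr}) for the polynomial lower bound on $B$---but these are routine adjustments you clearly anticipate.
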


\begin{proof}
In what follows let $\alpha= d(1-\frac 1p -\frac 1q)-\frac 2r$. We
begin by observing the lower bound $B\ge c \la^\alpha$ which follows
from the example in \S\ref{secondneccond}.

By a change of variable $\xi=\la x$, $ s = \la^2 \rho$, $y= 2\la z$
we see that
 \eqref{lqlrext2}  is equivalent with
\begin{multline*}
\qquad\Big(\int_{a_0\la\le |x|\le a_1\la}\Big(\int_1^2
\Big|\int_{|y|\le 2\la} f(\tfrac {y}{2\la})
e^{i(\rho|y|^2/4-\rho\inn xy/2)} dy\Big|^2  d\rho\Big)^{q/r}
dx\Big)^{1/q}
\\
\le CB\la^{-\alpha}(2\la)^d
 \la^{ -d/q-2/r}
\|f\|_p .\qquad
\end{multline*}
By inverting $t=1/\rho$ the previous inequality
follows from
\begin{multline*}
\qquad \Big(\int_{a_0\la\le |x|\le a_1\la}\Big(\int_{1/2}^1
\Big|\frac{1}{(4\pi i t)^{d/2}}\int_{|y|\le 2\la} g(y) e^{\frac{i|x-y|^2}{4t}}
dy\Big|^r  dt\Big)^{q/r} dx\Big)^{1/q}
\\
\lc B\la^{-\alpha} \la^{d-d/p-2/r}  \la^{-d/p}\|g\|_p \qquad
\end{multline*}
which can be rewritten as \Be \label{Ugbound}\Big(\int_{a_0\la\le
|x|\le a_1\la}\Big(\int_{1/2}^1 |e^{it\Delta}g(x)|^r
dt\Big)^{q/r} dx\Big)^{1/q} \lc B  \|g\|_p\,, \Ee for $g$ supported
in $\{y:|y|\le 2\la\}$. By assumption
$$\Big(\int_{a_0\la\le |x|\le a_1\la}\Big(\int_{1/2}^1
\Big| e^{it\Delta}\eta(\tfrac D\la )g(x) \Big|^r  dt\Big)^{q/r}
dx\Big)^{1/q} \le B \|g\|_p
$$
and  thus \eqref{lqlrext2} follows from the straightforward estimate
\Be\label{straightf} \Big(\int_{a_0\la\le |x|\le
a_1\la}\Big(\int_{1/2}^1 \Big|e^{it\Delta}(I-\eta(\tfrac D\la )) g(x)
\Big|^r  dt\Big)^{q/r} dx\Big)^{1/q} \le C_M \la^{-M} \|g\|_p\, ,
\Ee whenever $g$ is supported in $\{y:|y|\le 2\la\}$.

To see \eqref{straightf} we decompose the multiplier.
Let $\chi_0$ be smooth and supported in $\{|\xi|<2\}$ and $\chi_0(\xi)=1$ for $|\xi|\le 1$, and let
$\chi_k(\xi)=\chi_0(2^{-k}\xi)-\chi_0(2^{1-k}\xi)$, for $k\ge 1$. Let
$$E_{\la,k}(x,t)= \frac{1}{(2\pi)^d}\int \chi_k(\tfrac{\xi}{\la})(1-\eta(\tfrac{\xi}{\la})) e^{-it|\xi|^2+i\inn x\xi} d\xi 
$$
and  we need to bound
the expression
 $$\big(I-\eta(\tfrac D\la )\big)e^{it\Delta} g(x,t)=\sum_{k\ge 0} \int_{|y|\le 2\la} E_{\la,k}(x-y)
g(y) dy.$$
We now examine $\nabla_\xi(\inn {x-y}{\xi} -t|\xi|^2)= x-y-2t\xi $.
Since $a_0>2$,  for the relevant choices $a_0|\la|\le |x|\le
a_1\la$, $1/2\le t\le 1$, $|y|\le 2\la$ we have
$$| x-y-2t\xi|\ge \begin{cases} \frac 12(a_0-2)\la &\text{  if
$|\xi|\le \tfrac{a_0-2}{4}\la$, }
\\
\max\{ \tfrac{|\xi|}{2},\,\, (a_1+2)\la\}
&\text{  if
$|\xi|\ge (a_1+2)\la$.}\end{cases}
$$
Since
 $1-\eta(\la^{-1} \xi)=0$ for
$\frac{a_0-2}{4} \le |\xi|\le 2(a_1+2)$, after an $N$-fold
integration by  parts we find that $|E_{\la,k}(x-y,t) | \le
C_{N}(2^k\la)^{d-N}$ for this choice of $x,y,t$, and the estimate
\eqref{straightf} follows.
\end{proof}
To complete the proof of Theorem \ref{besovextequiv} we also need the following scaling lemma.
\begin{lemma}\label{smallerintervals}
 Let $\gamma>d(\frac 1p-\frac 1q)-\frac 2r$.
Suppose that for  $\la\gg1$ \Be\label{lplq1} \Big\| \Big(\int_{1/2}^1 |
 e^{it\Delta}\chi(\tfrac D\la) f|^r dt\Big)^{1/q}\Big\|_q \lc \la^\gamma
\|f\|_p, \Ee
where $\chi\in C^\infty_c  $ is supported in  $(1/2,2)$ (with
suitable bounds). Then, for  $\la\gg1,$ \Be\label{lplqga} \Big\|
\Big(\int_{I} | e^{it\Delta}\chi(\tfrac D\la) f|^r
dt\Big)^{1/r}\Big\|_q \lc \la^\gamma\|f\|_{p}. \Ee
\end{lemma}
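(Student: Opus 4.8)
The plan is to decompose the time integral $\int_I$ dyadically about the origin and, on each dyadic piece, to rescale parabolically so that the hypothesis \eqref{lplq1} applies at a lower frequency scale. Since $I$ is compact and the contribution of negative times is handled exactly like that of positive times (replace $t$ by $-t$ and $f$ by $\bar f$), it suffices to estimate the $L^q_x L^r_t$ norm of $F(x,t):=e^{it\Delta}\chi(\tfrac D\la)f(x)$ over $I\cap(0,\infty)$. Writing $I\cap(0,\infty)$ as a union of the dyadic intervals $J_\ell=[2^{-\ell-1},2^{-\ell}]$ with $2^{-\ell}\lc 1$ and using the triangle inequality in the mixed-norm space $L^q_x L^r_t$,
\[
\Big\|\Big(\int_{I\cap(0,\infty)}|F|^r\,dt\Big)^{1/r}\Big\|_q\;\le\;\sum_{\ell:\,2^{-\ell}\lc 1}\Big\|\Big(\int_{J_\ell}|F|^r\,dt\Big)^{1/r}\Big\|_q\,,
\]
which is valid for all $r\in[2,\infty]$. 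I would then split this sum according to whether $2^{-\ell}\ge c\la^{-2}$ (the oscillatory range, with $c$ a small fixed constant to be chosen) or $2^{-\ell}<c\la^{-2}$ (the trivial range).

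For the oscillatory range I would perform the parabolic change of variables $t=2^{-\ell}s$, $x=2^{-\ell/2}y$ (equivalently $\xi=2^{\ell/2}\eta$ on the Fourier side). Because $\chi(2^{\ell/2}\eta/\la)=\chi(\eta/\la_\ell)$ with $\la_\ell:=2^{-\ell/2}\la$, a short computation gives
\[
e^{it\Delta}\chi(\tfrac D\la)f\big(2^{-\ell/2}y\big)=e^{is\Delta}\chi\big(\tfrac{D}{\la_\ell}\big)f_\ell(y),\qquad f_\ell(w):=f(2^{-\ell/2}w),
\]
and $\|f_\ell\|_p=2^{\ell d/(2p)}\|f\|_p$. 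On the oscillatory range $\la_\ell$ exceeds the threshold in \eqref{lplq1} (choose $c$ accordingly), so applying \eqref{lplq1} at scale $\la_\ell$ to $f_\ell$ and tracking the Jacobians $dt=2^{-\ell}\,ds$, $dx=2^{-\ell d/2}\,dy$ yields
\[
\Big\|\Big(\int_{J_\ell}|F|^r\,dt\Big)^{1/r}\Big\|_q\;\lc\;2^{-\ell/r}2^{-\ell d/(2q)}\la_\ell^{\gamma}\|f_\ell\|_p\;=\;\la^{\gamma}\,2^{-\ell\delta}\,\|f\|_p,\qquad \delta=\tfrac12\Big(\gamma+\tfrac2r-d\big(\tfrac1p-\tfrac1q\big)\Big).
\]
The hypothesis $\gamma>d(\tfrac1p-\tfrac1q)-\tfrac2r$ is exactly the assertion that $\delta>0$, so $\sum_\ell 2^{-\ell\delta}\lc 1$ and the oscillatory range contributes $\lc\la^\gamma\|f\|_p$.

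For the trivial range $2^{-\ell}<c\la^{-2}$, the phase $t|\xi|^2$ is bounded on $\supp\chi(\cdot/\la)$, so by a stationary-phase (integration by parts) argument the convolution kernel $K^\la_t$ of $e^{it\Delta}\chi(\tfrac D\la)$ satisfies $|K^\la_t(x)|\le C_N\la^d(1+\la|x|)^{-N}$ uniformly for such $t$. Consequently $\sup_{0<t<c\la^{-2}}|e^{it\Delta}\chi(\tfrac D\la)f|$ is dominated pointwise by the single convolution $C_N\,\big(\la^d(1+\la|\cdot|)^{-N}\big)*|f|$, whose $L^q$ norm is $\lc\la^{d(1/p-1/q)}\|f\|_p$ by Young's inequality. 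Estimating the $t$-integral over $J_\ell$ trivially costs a factor $2^{-\ell/r}$, and $\sum_{2^{-\ell}<c\la^{-2}}2^{-\ell/r}\lc\la^{-2/r}$, so this range contributes $\lc\la^{d(1/p-1/q)-2/r}\|f\|_p\le\la^\gamma\|f\|_p$, again by the hypothesis on $\gamma$. Adding the two ranges gives \eqref{lplqga}.

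All the estimates here are elementary; the one step that genuinely requires care is verifying that the exponent $\delta$ produced by the parabolic rescaling is strictly positive. It is precisely the hypothesis $\gamma>d(\tfrac1p-\tfrac1q)-\tfrac2r$ that ensures $\delta>0$, i.e.\ that the dyadic time-pieces accumulating at $t=0$ sum to a convergent geometric series; without this the argument fails near the origin, which is why this (rather than a comparison with the critical exponent $\alpha_{cr}(p;q,r)$) is the natural condition in the lemma.
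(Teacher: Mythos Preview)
Your argument is correct and follows essentially the same route as the paper: a dyadic decomposition of the time interval near $t=0$, parabolic rescaling on each piece to invoke the hypothesis at frequency $\la_\ell=2^{-\ell/2}\la$, and a direct kernel bound plus Young's inequality for the tiny-time range; the condition $\gamma>d(\tfrac1p-\tfrac1q)-\tfrac2r$ is used exactly as you say to make the geometric series converge. One small point: as written, your treatment of the trivial range sums $\sum_{2^{-\ell}<c\la^{-2}}2^{-\ell/r}$, which diverges when $r=\infty$; the fix (which is what the paper does) is simply to treat $(0,c\la^{-2}]$ as a single interval and use your uniform pointwise bound directly, giving the same contribution $\la^{d(1/p-1/q)-2/r}\|f\|_p$ without any summation.
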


\begin{proof}
 It is easy to calculate that
$$\sup_{0\le t\le (8\la)^{-2}}
|\cF^{-1} [\chi(\tfrac \cdot\la) \exp(-it|\cdot|^2)] (x)| \le C_N \la^d(1+\la|x|)^{-N}
$$ and thus, by Young's inequality,
\begin{align}
\notag\Big\| \Big(\int_0^{(8\la)^{-2}} |
e^{it\Delta}\chi(\tfrac D\la) f|^r dt\Big)^{1/r}\Big\|_q
&\lc \Big\| \la^{-2/r}\int \la^{d}(1+\la|y|)^{-N}|f(\cdot-y)|dy\Big\|_q\\ &\lc
\la^{d(\frac 1p-\frac 1q)-\frac 2r}
\|f\|_p.
\label{firstpiece}
\end{align}
Now letting $(8\la)^{-2}\le b\le 1$,
$$\Big(\int_{b/2}^b |e^{it\Delta }\chi(\tfrac{D}{\la})  f (x)|^r  dt\Big)^{1/r}
=b^{1/r}\Big(\int_{1/2}^1 \Big|\chi(\tfrac{D}{b^{1/2}\la}) e^{is\Delta } [f(b^{1/2}\cdot)]
(b^{-1/2}x)\Big|^r  ds\Big)^{1/r}.$$
Thus by  a change of variable \eqref{lplq1} implies
$$\Big\|\Big(\int_{b/2}^b |e^{it\Delta }\chi(\tfrac{D}{\la})  f |^r  dt\Big)^{1/r}\Big\|_q
\lc (\sqrt b)^{-d(\frac 1p-\frac 1q)+\frac 2r}  (\la\sqrt b)^{\gamma} \|f\|_p.
$$
We choose $b= 2^{-l}$,  and since $\gamma> d(\frac{1}{p}-\frac1q)-\frac2r$ we may
sum over $l$ with $(8\la)^{-2}\le 2^{-l}\le 1$ and combine with
\eqref{firstpiece}. Hence we get
$$ \Big\|
\Big(\int_0^1 |e^{it\Delta }\chi(\tfrac{D}{\la})  f|^r
dt\Big)^{1/r}\Big\|_q \lc \la^\gamma\|f\|_{p}. $$
Now $\eqref{lplqga}$ with $I=[-1,1]$ follows using the formula
$e^{-it\Delta } f= \overline{e^{it\Delta }\overline {f\,}}\,,$ and the triangle inequality. Finally, by scaling,  we can enlarge the time interval (so that the implicit constant is of course dependent on  the interval), and we are done.
\end{proof}

\begin{proof}[Proof of Theorem \ref{besovextequiv}]
The implication \eqref{lqlrbesSchr} $\Rightarrow$
\eqref{lqlrforext}, for all $\nu>0$, follows from
Lemma~\ref{Schrimpliesrestr}.

For the implication \eqref{lqlrforext} $\Rightarrow$
\eqref{lqlrbesSchr} we decompose $f=\sum_{k=0}^\infty P_k f$, with
the standard inhomogeneous decomposition, and assume for $k>1$  that
$\supp \widehat{P_k f}$ is contained in $\{\xi:2^{k-1}\le |\xi|\le
2^{k+1}\}$ and $\supp \widehat {P_0 f} $ is contained in  $\{\xi:
|\xi|\le 2\}$. We  estimate
 $\chi(t)  UP_k f(x,t)$ where
 $\chi\in C^\infty_c$ with $\chi(t)=1$ on $[-1,1]$.
Let $\widetilde P_k$ have similar properties to $P_k$, with $\widetilde P_k P_k=P_k$.  We prove  the  inequality
\Be \label{dyadicpiecesbesov}
\Big\| \Big(\int |\chi(t) U\widetilde P_k f(\cdot, t)|^r dt\Big)^{1/r}\Big\|_q
\lc   2^{k\gamma}\|f\|_p, \quad
\gamma=d\big(1-\tfrac1p-\tfrac 1q\big)-\tfrac 2r+2\beta\,,
\Ee
which we apply with $P_kf$ in place of $f$.
Now if $\beta>-d(1/2-1/p)$
then the restriction on $\gamma$ in Lemma \ref{smallerintervals} is
satisfied. Thus
\eqref{dyadicpiecesbesov} follows by combining
 Lemmata \ref{resimpliesSchr} and
\ref{smallerintervals} (together with a  finite decomposition and mild
rescaling).
This immediately yields the  implication \eqref{lqlrforext} $\Rightarrow$
\eqref{lqlrbesSchr}  in the range $\nu\le 1$.

If $r<\infty$ we can use
Littlewood--Paley theory  to extend this implication to  the case $\nu= 2$
(which implies the corresponding inequality for  $\nu<2$).
Let,   for a function $g$ on $\bbR^d\times \bbR$,
$$ R_{2k}g(x,t)= \frac{1}{2\pi} \int \int\beta(2^{-2k}\tau) e^{i\tau(t-s)} d\tau\,g(x,s) \, ds,$$
where  $\beta $ is supported in $[1/10, 10]$ and  $\beta(\tau)=1$ for
$\tau\in[1/8, 8]$.

The contribution $(I-R_{2k})[\chi  \,UP_k f]$ is negligible.  To see this one
 uses various standard integration by parts arguments,  in particular
the decay of $\int \chi(s)e^{is(|\xi|^2-\tau)} ds$ when $|\xi|^2 \gg \tau$
or $\tau\gg |\xi|^2$ to analyze the kernel. We omit the details which give
$$\Big\|\Big(\int_\bbR | (I-R_{2k})[\chi U P_k f]|^r dt\Big)^{1/r}\Big\|_{L^q(\bbR^d)}\lc C_N 2^{-kN} \|P_k f\|_p.$$
It thus  remains  to show
\Be \label{LPforR}
\Big\|\Big(\int_\bbR \Big| \sum_{k\ge 1} R_{2k}[\chi \,U P_k f]\Big|^r dt\Big)^{1/r}\Big\|_{L^q(\bbR^d)}\lc  \Big(\sum_k \big[ 2^{k\gamma}\|P_k f\|_p\big]^2\Big)^{1/2}.
\Ee

Using Littlewood--Paley theory on $L^r(\bbR)$  followed by  applications of
the triangle inequa\-lities for  $L^{r/2}$ and  $L^{q/2}$ we see that the left hand side of \eqref{LPforR} is controlled by a constant times
$$\begin{aligned}
&\Big\|\Big( \sum_k | \chi\, UP_k f|^2\Big)^{1/2}\Big\|_{L^{q}(\bbR^{d}; L^{r}(\bbR))}\,=\,\Big\| \sum_k \big| \chi\, UP_k f|^2\Big\|_{L^{q/2}(\bbR^{d}; L^{r/2}(\bbR))}^{1/2}
\\
&\le \Big(\sum_k \big \||\chi \,UP_k f|^2\big\|_{L^{q/2}(\bbR^{d}; L^{r/2}(\bbR))}\Big)^{1/2}\,= \Big(\sum_k \big \|\chi \,UP_k f\big\|_{L^{q}(\bbR^{d}; L^{r}(\bbR))}^2\Big)^{1/2}\,.
\end{aligned}
$$
Now  \eqref{LPforR}
 follows from \eqref{dyadicpiecesbesov}.
\end{proof}

\section{Proof of Proposition~\ref{necprop}}\label{necsect}

First we prove the easier necessary conditions
$(i)$-$(iv)$.

\subsection{\it The condition  $p\le q$} \label{firstneccond}
This  follows from the translation invariance
(see an argument in \cite{hoer1}).
More precisely, the $ L^p_\alpha(\bbR^d)\to L^{q}(\bbR^d;L^r(I))$ boundedness is equivalent
with the
$ L^p(\bbR^d)\to L^{q}(\bbR^d;L^r(I))$ boundedness  of the operator
$U[(I-\Delta)^{\alpha/2} f]$ which commutes with translations on $\Bbb R^d$.
Let
$A=\sup_{\|f\|_p\le 1}\| U[(I-\Delta)^{\alpha/2} f]\|_{L^q(L^r)}.$ Then by the density argument, for $\epsilon>0$  there is a $g\in C_c^\infty (\mathbb R^d)$  such that $A-\epsilon< \| U[(I-\Delta)^{\alpha/2} g]\|_{L^q(L^r)}$ and $\|g\|_p=1$.
One may test the inequality with $f= g+ g(\cdot+a e_1)$. Letting $a\to\infty$, we see that $(A-\epsilon)2^{1/q} \le A 2^{1/p}$, which gives $A2^{1/q} \le A 2^{1/p}$ by letting $\epsilon\to 0$, and thus $p\le q$.

\subsection{\it The condition  $\alpha\ge d(1-\frac 1p-\frac 1q)-\frac 2r$}\label{secondneccond}
 This condition follows by a focusing example (see for example \cite{ro1}). Let $\eta\in C_c^\infty$ be radial and  supported in $\{\xi: 1<|\xi|<2\}$.
Define for $\la\gg1 $, the function $f_\la$ by $\widehat {f_\la}(\xi) =
e^{i\frac{1}{2}|\xi|^2} \eta(\la^{-1}\xi)$. Then $\|f_\la\|_{L^p_\alpha} \lc \la^{\alpha+d/p}$.
  Moreover $|Uf(x,t)|\gc \la^d$
if, for suitable $c>0$,
 $|x|\le c\la^{-1}$ and $|t-\frac{1}{2}|\le c\la^{-2}$.
For large $\la$ this leads to the restriction
 $\alpha\ge d(1-\frac 1p-\frac 1q)-\frac 2r$.

\subsection{\it The condition   $\alpha\ge \frac1q-\frac1r$}
Let $g_\la$ be defined by $\widehat g_\la(\xi)= \chi(|\xi-\la e_1|)$, $\chi$ supported in an $\eps$-neighborhood of $0$
(see  \cite{dake}, \cite{rose}), so that $\|g_\la\|_{L^p_\alpha} \lc \la^\alpha$.
Also, $$Ug_\la(x,t)= \frac{1}{(2\pi)^d}\int \chi(|h|) e^{i\phi_\la(x,t,h)}\,dh$$ where
$\phi_\la(x,t,h)=-t|h|^2-t\la^2+x_1\la +\inn {x-2t\la e_1}{h}$.
Then $|Ug_\la(x,t)|\ge c_0>0$ if $|t-(2\la)^{-1}x_1|\le c\la^{-1} $ for $0\le x_1\le \la$,
$|x_i|\le c$, $i=2,\dots, d$.
It follows that $\|Uf\|_{L^q(L^r(I))}\ge \la ^{1/q-1/r}$. Hence the condition
$\alpha\ge 1/q-1/r$ follows.

\subsection{ \it The condition   $\alpha\ge \frac1q-\frac1p$}
 Let $\lambda\gg 1$ and set $\widehat{h_\lambda}(\eta)=\phi(|\eta'|)\lambda\phi(\lambda(\eta_1-\lambda))$ with $\phi\in C_c^\infty(\R)$.  Then $\|h_\lambda\|_{L^p_{\alpha}}\lesssim \lambda^\alpha \lambda^{1/p}$. Note that
\[ Uh_\lambda(x,t)=\frac{1}{(2\pi)^d}\int e^{-it|\eta'|^2+i\inn {x'}{\eta'})} \phi(|\eta'|) d\eta'  e^{-i\lambda^2t+i\lambda x_1}
\int e^{i(-t\xi_1^2-2\lambda t\xi_1+x_1\xi_1)}
\lambda\phi(\lambda\xi_1) d\xi_1,\] so that $|Uh_\lambda(x,t)|\ge
c>0$ if $|t|, |x'|\le c$ and $|x_1|\le c\lambda$ for small enough
$c>0$. This shows the necessity of $\alpha\ge 1/q-1/p$.

\

To show the conditions $(v)$ and  $(vi)$,  we use sharp bounds in the
construction of  Besicovich sets \cite{keich} and adapt Fefferman's  argument
for the  disc multiplier \cite{F} (see also \cite{BCSS}).

\subsection{\it The condition $\alpha>\frac1q-\frac1p$\, if\, $r>2$}  This follows from

\begin{proposition}\label{kakeyaprop}
Let $p,q,r\in [2,\infty)$. Let $\eta$ be a radial $C_c^\infty$ function
satisfying $\eta(\xi)=1$ for $1/4\le |\xi|\le 12$.
Define $\fa_\la$ by
\Be\label{fapqr}
\fa_\la(p;q,r)=\sup_{\|f\|_p\le 1}
\Big\|\Big(\int_{1/2}^1 |e^{it\Delta} \eta(\tfrac D\la ) f|^r dt\Big)^{1/r}
\Big\|_{L^q(\bbR^d)}.
\Ee
Then for  $\la\gg 1$,
\Be\label{fapqrlowerbd}
 \fa_\la(p;q,r)\ge c \la^{1/q-1/p}(\log \la)^{1/2-1/r}.
\Ee
\end{proposition}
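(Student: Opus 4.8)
The plan is to build an extremizer adapted to a Besicovitch (Kakeya) configuration of tubes, following the philosophy of Fefferman's disc multiplier counterexample. First I would fix a large parameter $\la$ and a compressed collection of essentially $\la^{d-1}$ many $1\times\dots\times 1\times \la$ tubes (oriented in $\sim \la^{d-1}$ different directions $\theta$) with the Besicovitch property: the tubes $\tau_\theta$ are pairwise almost disjoint, but their translates $\widetilde\tau_\theta$ (shifted along their own axis by a distance $\sim\la$) all meet a common set of measure $\lc \la^{d}/\log\la$, using the sharp bound from the construction of Besicovitch sets \cite{keich}. To each direction $\theta$ one associates a frequency-localized wave packet: a function $f_\theta$ with $\widehat{f_\theta}$ a bump of size $1$ in an $\la^{-1/2}$-cap of the annulus $\{|\xi|\approx\la\}$ centred in direction $\theta$, so that $\eta(D/\la)f_\theta=f_\theta$, $\|f_\theta\|_p\approx \la^{(d+1)/(2p)}$ (up to normalization), and $e^{it\Delta}f_\theta$ is, for $t\in[1/2,1]$, concentrated on a tube $\tau_\theta$ of dimensions $\la^{-1/2}\times\dots\times\la^{-1/2}\times 1$ (after rescaling, a unit tube), with $|e^{it\Delta}f_\theta|\gtrsim 1$ there.

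Next I would take $f=\sum_\theta \epsilon_\theta f_\theta$ with random signs $\epsilon_\theta$ and estimate both sides of the inequality defining $\fa_\la(p;q,r)$. For the right side, a standard square-function/Khintchine argument gives $\bbE\|f\|_p^p \lc \big(\sum_\theta\|f_\theta\|_p^2\big)^{p/2}\cdot(\text{bounded overlap})$, so $\|f\|_p\lc \la^{(d-1)/2}{}^{\cdot 1/2}\la^{\text{(normalizing power)}}$; the precise bookkeeping of the $\la$-powers is routine once the normalization of $f_\theta$ is pinned down. For the left side, the key geometric input is that on each shifted tube $\widetilde\tau_\theta$ (which corresponds to evaluating $e^{it\Delta}f_\theta$ at the later "focusing" time) one has $|e^{it\Delta}f_\theta(x)|\gtrsim 1$ for $t$ in an interval of length $\gtrsim 1$ — so the $L^r_t$-norm over $I$ is $\gtrsim 1$ there — while all the $\widetilde\tau_\theta$ overlap on a set $E$ with $|E|\lc \la^{\text{(vol)}}/\log\la$. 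On $E$, Khintchine's inequality in the $t$-variable gives $\bbE\big\|\sum_\theta\epsilon_\theta e^{it\Delta}f_\theta(x)\big\|_{L^r(I)}^r \gtrsim \big(\sum_{\theta: x\in\widetilde\tau_\theta}1\big)^{r/2}$ pointwise (here $r\ge 2$ is used), i.e. the contribution is like the square function $(\#\{\theta: x\in\widetilde\tau_\theta\})^{1/2}$, whose $L^q_x$-norm over $E$ is bounded below by Hölder's inequality using $\int_E \#\{\dots\}\,dx = \sum_\theta|\widetilde\tau_\theta|\approx \la^{d-1}\cdot\la^{\text{(vol of one tube)}}$ and $|E|\lc\la^{\text{(vol)}}/\log\la$. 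Combining, $\bbE\|(\int_I|e^{it\Delta}\eta(D/\la)f|^r)^{1/r}\|_q \gtrsim \la^{\text{(power)}}(\log\la)^{1/2-1/q}\cdot|E|^{1/q-1/?}$, and after tracking the exponents the ratio of the two sides is $\gtrsim \la^{1/q-1/p}(\log\la)^{1/2-1/r}$, which gives \eqref{fapqrlowerbd} by choosing a good realization of the signs.

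I expect the main obstacle to be bookkeeping the two logarithmic gains correctly and making them land on the right exponent: one $\log$-type factor comes from the Kakeya compression (the Besicovitch bound $|E|\lc \la^{d}/\log\la$, really the Perron-tree construction), and the power $1/2-1/r$ rather than $1/2-1/q$ must come from applying Khintchine \emph{in the $t$-variable} inside the $L^r(I)$ norm before taking $L^q_x$ — so one needs $r\ge 2$ precisely there, and the $L^q_x$ integration is then an $L^\infty\to L^q$ type loss on the set $E$ which contributes a clean power of $\la$ with no log. A secondary technical point is passing from the half-wave/Schrödinger evolution $e^{it\Delta}f_\theta$ to genuine tube concentration: one must check that for $t$ ranging over an interval of length $\gtrsim 1$ (not just an instant) the packet stays concentrated on a fixed tube up to harmless errors, which follows from the fact that the group velocity $2\xi$ is roughly constant ($\approx 2\la\theta$) across the $\la^{-1/2}$-cap and the "curvature spreading" over a unit time is $\lc 1$; this is standard wave-packet analysis but should be stated carefully. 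Finally, one should note that exactly the same example — tested against the Sobolev norm $\|f\|_{L^p_\alpha}\approx \la^\alpha\|f\|_p$ — simultaneously yields the Besov version, since on each frequency annulus the $\nu$-index is irrelevant, as remarked after Theorem~\ref{besov-vs-ext}.
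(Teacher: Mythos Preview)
Your overall strategy---randomize a family of wave packets adapted to a Besicovitch configuration, use Khintchine to reduce to a square function, and exploit the Kakeya compression $|E|\lc (\text{total tube volume})/\log\la$---is exactly the paper's strategy.  But the outline, as written, does not produce the exponent $\tfrac12-\tfrac1r$ on the logarithm; it produces $\tfrac12-\tfrac1q$, and the fix you propose (``apply Khintchine in the $t$-variable inside $L^r(I)$'') does not repair this.

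Two concrete problems.  First, the geometry: a packet with $\widehat{f_\theta}$ in a unit-scale box at frequency $\approx\la\theta$ moves with group velocity $\approx 2\la\theta$, so for \emph{fixed} $x$ the packet is large only on a $t$-interval of length $\approx\la^{-1}$, not $\gtrsim1$.  Hence your pointwise bound $\int_I(\sum_\theta|Uf_\theta(x,t)|^2)^{r/2}\,dt\gtrsim n(x)^{r/2}$ with $n(x)=\#\{\theta:x\in\widetilde\tau_\theta\}$ is not correct: the sum inside depends on $t$, and at each $t$ only those $\theta$ whose \emph{space--time} tube contains $(x,t)$ contribute.  Second, even granting your bound, your H\"older step $\int_E n^{q/2}\ge |E|^{1-q/2}(\int_E n)^{q/2}$ gives $(\log\la)^{1/2-1/q}$, which you yourself write in the intermediate line, and there is no mechanism in the outline to convert it to $(\log\la)^{1/2-1/r}$.

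What the paper does instead is to first pass to the extension side via Lemma~\ref{Schrimpliesrestr} (this untilts the geometry: the tubes $R_j$ live in the $(\xi,s)$ region $\cA_4(\la^2)\times[\la^2,2\la^2]$, and the Kakeya construction is $2$-dimensional, with $\approx\la$ tubes, not $\la^{d-1}$).  After Khintchine one bounds
\[
\la^{d+3}\lc\int_E\sum_j|Tf_j|^2 \le A\cdot B,
\qquad A=\Big\|\Big(\sum_j|Tf_j|^2\Big)^{1/2}\Big\|_{L^q(L^r)}^2,\quad
B=\big\|\chi_E\big\|_{L^{(q/2)'}_\xi(L^{(r/2)'}_s)}\,.
\]
The term $A$ carries $\fa_\la(p;q,r)^2$; the whole $\log$-gain comes from $B$, and \emph{this} is where $r$ enters.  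One writes $\fv(\xi)=\int\chi_E(\xi,s)\,ds$, splits $\{\fv\le M\}\cup\{\fv>M\}$, uses that the $\xi$-support of $\fv$ has measure $\lc\la^{d+1}$ (from the $2$D nature of the construction) together with $\int\fv=|E|\lc\la^{d+3}/\log\la$, and optimizes $M=\la^2/\log\la$ to get $B\lc\la^{(d+3)(1-2/q)}\la^{4/q-4/r}(\log\la)^{2/r-1}$.  This level-set estimate for the mixed dual norm of $\chi_E$ is the step your outline is missing; it cannot be replaced by the single H\"older bound you describe, and it is the reason the exponent is $\tfrac12-\tfrac1r$ rather than $\tfrac12-\tfrac1q$.  (Incidentally, your cap scale $\la^{-1/2}$ and tube count $\la^{d-1}$ are also off; but those are bookkeeping issues compared to the missing $B$-estimate.)
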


\begin{proof}
In what follows we set
$$\cA_4(\la^2)= \{x:3\la^2\le |\xi|\le 4\la^2\}.$$
By Lemma \ref{Schrimpliesrestr}, with parameters $a_0=3$, $a_1=4$,
for $\la\gg 1$
\begin{equation*}
\sup_{\|f\|_{L^p(\bbR^d)}\le 1}\Big(\int_{\cA_4(\la^2)}\Big(\int_{\la^2}^{2\la^2}
\big|\cE f \big(\tfrac{ s  }{\la^2} \xi,  s  \big)\big|^r
d s  \Big)^{\frac qr}d\xi\Big)^{\frac1q}\lesssim \fa_\la (p;q,r) \la^{- d+\frac
dp +\frac dq+\frac 2r}\,.
\end{equation*}
Let $$Tf(\xi, s  ) =
\cE f(\tfrac{ s  }{\la^2} \xi,  s  ).$$
Using
Khintchine's inequality we also get
\Be
\label{lqlrext3}
\sup_{\|\{ f_j\}\|_{L^p(\ell^2)} \le 1 }
\Big(\int_{\cA_4(\la^2)}\Big(\int_{\la^2}^{2\la ^2}
\big(\sum_j|Tf_j|^2\Big)^{\frac r2}
 d s  \Big)^{\frac qr}d\xi\Big)^{\frac1q}\lc \fa_\la (p;q,r) \la^{- d+\frac dp +\frac dq+\frac 2r}.
\Ee

For integers  $|j|\le \la/10$, let $z^j= (\la^{-1}j,0,\dots, 0)$ in
$\bbR^d$. Let $I_j=\{y: |y-z^j|\le (100 d\la)^{-1}\}.$ Let
$$R_j=\{(\xi, s  )\in \bbR^{d+1}: |\xi_1- 2j\la^{-1} s  |\le 10^{-1}\la,\, |\xi_i|\le
10^{-1}\la, \quad i=2,\dots, d,\,| s  |\le 100^{-1}\la^2\}.$$ For a
pointwise lower bound we use the following  lemma.

\begin{lemma}\label{lower} Let $a\in \bbR^d$, $b\in \bbR$,
and $g_j(y)= \chi_{I_j}(y) e^{i\inn a y-ib|y|^2}$. Then there is a
constant $c>0$, independent of $\la$, $j$ so that
\begin{equation*}
\Re\big[e^{i\inn{\xi-a}{z^j}-i( s  -b)|z^j|^2}
\cE[g_j](\xi, s  )\big]
 \ge c\la^{-d}, \text{ if } (\xi, s  )\in R_j+(a,b)\,.
\end{equation*}
\end{lemma}

\begin{proof} Let $I_0=\{y: |y|\le (100 d\la)^{-1}\}.$
We have
\begin{align*}
\cE g_j(\xi, s  )&= \int e^{i  s  |y|^2-i \inn{\xi}{y}} g_j(y) \, dy
=
\int e^{-i\inn {\xi-a}{z^j+h}+i( s  -b)|z^j+h|^2} \chi_{I_j}(z^j+h) dh
\\
&=e^{-i\inn{\xi-a}{z^j}}e^{i( s  -b)|z^j|^2}
\int e^{-i(\inn {\xi-a-2( s  -b)z^j}{h})} e^{i( s  -b)|h|^2}\chi_{I_0}(h) dh
\end{align*}
The pointwise lower bound follows quickly.
\end{proof}

Let $N_\la $ to be the largest integer which is smaller than
$\la/10$. By making use of the Besicovich set construction of
Keich  \cite{keich},  there are vectors
$v_j\in \bbR^{d+1}$ such that $v_j=a_je_1+b_je_{d+1}$ for some $a_j,
b_j\in \bbR$,
$v_j+R_j\subset\{(\xi, s ): \lambda^2\le  s  \le 2\la^2\}$, and   
$$
\meas\Big(\bigcup_{j=1}^{N_\la }(v_j+R_j)\Big) \lc
\frac{\la^{d+3}}{\log\la}.
$$
This  is just an obvious extension of
the two dimensional construction which gives a collection of rectangles
$\{R_j^{[2]}\}$ and vectors $(a_j,b_j) $ such that
$\meas\big(\bigcup_{j=1}^{N_\la }(v_j+ R_j^{[2]})\big) \lc
\frac{\la^{4}}{\log\la} $ and $(a_j,b_j)+ R_j^{[2]}\subset
\{(\xi_1, s ): \lambda^2\le  s  \le 2\la^2\}$.

 Let $\Phi(\xi, s )= (\tfrac{ s }{\la^2}
\xi, s )$ which is $1$--$1$ on $\cA_4(\la^2)\times [\la^2,  2\la^2]$,
and has Jacobian $J_\Phi$ with $|\det( J_\Phi(\xi, s ))|\sim 1$. Let $$
V_j:= \Phi^{-1}(v_j+R_j) \cap \big(\cA_4(\la^2)\times[\la^2, 2\la^2]\big),
\qquad E:= \bigcup_{j=1,\dots, N_\la } V_j.
$$
Then it follows that
\Be\label{measE}\la^{d+2} \lc \meas(V_j), \qquad
\meas(E)\lc\frac{\la^{d+3}}{\log\la}\,.
\Ee
Let $f_j(y) = \chi_{I_j}(y)e^{i\inn{a_j}{y}-ib_j|y|^2}$. Then by  Lemma \ref{lower},
\Be \label{ptwlowerbdT}
|T f_j(\xi)|\gc \la^{-d}, \qquad \xi\in V_j,
\Ee
and
\Be\label{Lpl2est}
\Big\|\Big(\sum|f_j|^2\Big)^{1/2}\Big\|_p\lc \la^{(1-d)/p}.
\Ee

We now modify arguments in  \cite{BCSS}.
By  \eqref{measE}  and \eqref{ptwlowerbdT},  we have
\begin{align}\label{summeasVjone}
\la^{d+3}&\lc N_\la \la^{d+2}\lc
\sum_{j=1}^{N_\la }\meas (V_j) \\ &=
\int_E \sum_{j=1}^{N_\la } \chi_{V_j}(\xi, s )\, d s \,d\xi \lc \la^{2d}\int_E \sum_{j=1}^{N_\la }
 |T f_j(\xi, s )|^2 d s \,d\xi,\nonumber
\end{align}
and by applications of H\"older's inequality,
\Be\label{summeasVjtwo}
\la^{2d}\int_E \sum_{j=1}^{N_\la }
 |T f_j(\xi, s )|^2 d s  d\xi\lesssim \la^{2d}  A\cdot B,
\Ee
where
\begin{align*}
A&=\Big(\int_{\cA_4(\la^2)}\Big(\int_{\la^2}^{2\la^2}\Big(\sum_j|Tf_j(\xi, s )|^2\Big)^{\frac r2} d s  \Big)^{\frac qr}d\xi\Big)^{\frac 2q},\\
B&=\Big(\int_{\cA_4(\la^2)}\Big(\int_{\la^2}^{2\la^2}  \chi_{E}(\xi, s ) \,d s
\Big)^{\frac{(q/2)'}{(r/2)'}} d\xi \Big)^{1-\frac 2q}.
\end{align*}
From \eqref{lqlrext3} and \eqref{Lpl2est}  we obtain,
\Be\label{aa}
A\lesssim \Big( \la^{\frac{1-d}{p}}\fa_\la (p;q,r) \la^{- d+\frac 1p +\frac dq+\frac 2r} \Big )^2.
\Ee

In order to  estimate $B$  we set
$$\fv(\xi)=\int_{\la^2}^{2\la^2} \chi_E(\xi, s ) \, d s ,$$
 the measure of the vertical
cross section of $E$ at $\xi$.
For $M>0$, we break
$$
B\lesssim
\Big(\int_{\{\xi\in \cA_4(\la^2)\,:\, \fv(\xi)\le M\}}
\fv(\xi)^{\frac{(q/2)'}{(r/2)'} }
d\xi\Big)^{1-\frac 2q} +
\Big(\int_{\{\xi\in \cA_4(\la^2)\,:\, \fv(\xi)> M\}}
\fv(\xi)^{\frac{(q/2)'}{(r/2)'} }
d\xi\Big)^{1-\frac 2q}\,.
$$
From the construction  of $E$ it is obvious that  $\fv$ is supported  in a tube where
$|\xi_1|\le C\la^2$ and $|\xi_i|\le C\la$, $2\le i\le d$, so
that
$$\Big(\int_{\{\xi\in \cA_4(\la^2)\,:\, \fv(\xi)\le M\}}
\fv(\xi)^{\frac{(q/2)'}{(r/2)'} }
d\xi\Big)^{1-\frac 2q} \lc
M^{1-\frac 2r} \la^{(d+1)(1-\frac 2q)}.
$$
Moreover since  $r\le q$ and therefore $(1-\tfrac{(q/2)'}{(r/2)'})\ge 0$,   by \eqref{measE}
\begin{align*}
\Big(\int_{\{\xi\in \cA_4(\la^2): \fv(\xi)>M\}}
\fv(\xi)^{\frac{(q/2)'}{(r/2)'} }
d\xi\Big)^{1-\frac 2q} &\lc
\Big(\int \fv(\xi)
M^{\frac{(q/2)'}{(r/2)'}-1 }
d\xi\Big)^{1-\frac 2q}
\\
&\le M^{\frac 2q-\frac 2r} \meas(E)^{1-\frac 2q}
\lc
M^{\frac 2q-\frac 2r} \Big(\frac{\la^{d+3}}{\log\la}\Big)^{1-\frac 2q}\, .
\end{align*}
Combining these two bounds, we have
\[
B\lc M^{-2/r} \la^{(d+3)(1-\frac 2q)} \big[M \la^{-2(1-\frac 2q)}+
M^{\frac 2q} (\log \la)^{\frac 2q-1}\big],
\] and choosing $M= \la^2 (\log\la)^{-1}$, which optimizes the above,  we obtain
\Be \label{besicovclaim}
B\lc \la^{(d+3)(1-\frac2q)}
\la^{\frac 4q-\frac 4r} (\log \la)^{\frac 2r-1}\,.
\Ee

Finally, we combine \eqref{besicovclaim}, \eqref{aa}, \eqref{summeasVjtwo} and \eqref{summeasVjone} to obtain
$$\la^{d+3}
\lc  \la^{2d} \la^{(d+3)(1-\frac2q)}
\la^{\frac 4q-\frac 4r} (\log \la)^{\frac 2r-1}\,
 \big[ \la^{\frac {1-d}{p}}
\fa_{\la}(p;q,r)
\la^{-d+\frac{d}{p}+\frac {d}{q}+\frac {2}{r}}\big]^2 \,,
$$
which yields  $\fa_\la(p;q,r)\ge c(\log \la)^{\frac 12-\frac 1r}
\la^{\frac 1q-\frac 1p}$.
\end{proof}

\subsection{\it Relation with Bochner--Riesz and the condition $\alpha>  0$\  if\,  $r=2$, $p=q>2$, $d\ge 2$. }\label{secondbesch}
The $L^p\to L^p(L^2(I))$  estimate implies sharp
results for the Bochner--Riesz multiplier in the same way as the wave equation (\cf. \S7 in \cite{mss}).

For small $\delta>0$, let us set
$h_\delta(\xi)= \phi(\delta^{-1}(1-|\xi|^2))$ with $\phi\in C_c^\infty(-1,1)$. Let $\psi$ be radial, supported in $\{1/2<|\xi|<2\}$ so that $\psi=1$ on the support of $h_{\delta}$.
Then by the Fourier inversion formula and the support property of $\psi$ it follows that
$$
h_\delta (D)f= \frac {1}{2\pi}  \int_{-\infty}^\infty
\delta\widehat \phi(\delta s )\, e^{i s } e^{i s \Delta} \psi(D) f\, d s .
$$
By the Schwarz inequality  we get
$$|h_\delta (D)f|\le
\Big(\int |\delta\widehat\phi(\delta  s )|d s \Big)^{1/2}
\Big(\int|e^{i s \Delta }\psi(D) f|^2 |\delta\widehat \phi (\delta s )|d s \Big)^{1/2}.
$$
Thus we  see that
$$
\|h_\delta\|_{M_p} \lc   \sup_{\|f\|_p\le 1}\Big \| \Big(\int|e^{i s \Delta }\psi( D) f|^2 |\delta\widehat \phi (\delta s )|d s \Big)^{1/2} \Big \|_p,
$$
which after rescaling becomes
$$
\|h_\delta\|_{M_p} \lc
\sup_{\|f\|_p\le 1}\Big \| \Big(\int|e^{it\Delta }\psi( \sqrt\delta D) f|^2 |\widehat \phi (t)|dt\Big)^{1/2} \Big \|_p .$$
Hence, using the rapid decay of $\widehat \phi$ and a further rescaling we see that the  sharp bound $\|h_\delta\|_{M_p} \lc\delta^{1/2-d(1/2-1/p)}$, for $p>2+\frac{2}{d-1}$, would follow from  $U: B^p_{\alpha,\nu}\to L^p (L^2(I))$, with $\alpha=d(1-\frac2p)-1$, for any $\nu>0$.

 We see that the
$L^p\to L^p (L^2(I))$ inequality for some $p> 2$  would imply that
$h_\delta$ is a multiplier of $\cF L^p$ with bounds independent of $\delta$.
However   a variant of Fefferman's argument for the
ball multiplier \cite{F}, based on a Kakeya set argument,
 shows that
 \Be \label{hbound2}
 \|h_\delta\|_{M_p} \gc \log (1/\delta)^{1/2-1/p}.
 \Ee
This
 establishes the final necessary condition $(vi)$
in Proposition \ref{necprop}.
For completeness we  include some details of the argument.

\begin{proof}[Proof of \eqref{hbound2}]
By de Leeuw's theorem it suffices to prove the lower bound for $d=2$.
We may assume that $\delta<10^{-10}$. By Khintchine's inequality, we have
\begin{equation}\label{MZineq}\Big\|\big(\sum_\nu \big| h_\delta(D)  f_\nu\big|^2\big)^{1/2}
\Big\|_p \lc \|h_\delta\|_{M_p}
\Big\|\big(\sum_\nu |f_\nu|^2\big)^{1/2}\Big\|_p.
\end{equation}
For $\nu\in \mathbb Z\cap [-10^{-2}\delta^{-1/2}, \, 10^{-2}\delta^{-1/2}]$, let us set
\[h_{\delta,\nu}(\xi)= h_\delta(\xi) \phi(\delta^{-1/2}\xi_1-\nu)
\chi_{+}(\xi), \quad \xi=(\xi_1,\xi_2)\in \mathbb R^2\]
where $\chi_+ $ is the characteristic function of the upper half plane.
Define $T_\nu$ by $\widehat{T_\nu f}= h_{\delta,\nu}\widehat f$.
Let $\eta_\nu$ be the inverse Fourier transform of a  bump function which is
 supported on a ball of radius $C\delta^{-1/2}$ so that $\eta_\nu(\xi)=1$ for $\xi$ in  the support of
 $h_{\delta,\nu}$. Define  $\Phi_\nu$ by
$\widehat\Phi_\nu (\xi)=\eta_\nu(\xi)\phi(\delta^{-1/2}\xi_1-\nu)\chi_{+}(\xi)$.
Then
$|\Phi_\nu(x)|\lc\delta^{-d/2}(1+\delta^{-1/2}|x|)^{-{(d+1)}}$ for the $\nu$'s under consideration, so that
$\|\{\Phi_\nu*g_\nu\}\|_{L^p(\ell^2)}\lc
\|\{g_\nu\}\|_{L^p(\ell^2)}$.
Since $T_\nu g= h_\delta(D)[\Phi_\nu*g]$,
inequality \eqref{MZineq} applied to $f_\nu=\Phi_\nu*g_\nu$
implies that
\Be\label{mz}\Big\|\big(\sum_\nu |T_\nu g_\nu|^2\big)^{1/2}
\Big\|_p \lc \|h_\delta\|_{M_p}
\Big\|\big(\sum_\nu |g_\nu|^2\big)^{1/2}\Big\|_p.
\Ee

Let $\theta_\nu= (\delta^{1/2}\nu, \sqrt{1-\delta\nu^2})$, let $\theta_\nu^\perp$ be a unit vector perpendicular to $\theta_\nu$ and
\begin{align*} R_\nu&= \big\{(x_1,x_2): \,
|\inn{x}{\theta_\nu}|\le 10^{-2}\delta^{-1},\,
|\inn{x}{\theta_\nu^\perp}|\le 10^{-1}\delta^{-1/2}\big\}\,.
\end{align*}
Letting $f_\nu(y)=\chi_{R_\nu}(y)e^{i\inn{\theta_\nu}{y}}$, we have that
\Be \label{ptwlowerbound}
|e^{-i\inn {x}{\theta_\nu}}T_\nu f_\nu (x)|\ge c>0 \text{ for } x\in R_\nu\,.
\Ee

Here we use again  the sharp bounds in the construction
of   Besicovich sets  \cite{keich}.
There are vectors $a_\nu$, $|\nu|\le 10^{-2}\delta^{-1/2} $ so that
with
$E:=  \bigcup_\nu R_\nu$ the measure of $E$ is
$O(\delta^{-2}/\log\delta^{-1})$
but
the corresponding translations $a_\nu+ R^\nu$ have $O(1)$ overlap.
Define $g_\nu(x)= f_\nu(x-a_\nu)$, which is supported in $a_\nu+ R_\nu$. Then $|T_\nu g_\nu|\ge c$ on
$a_\nu+R_\nu$.
Thus we get
$$
\delta^{-2} \lc \sum_\nu|R_\nu| \lc \sum_\nu\int \chi_{a_\nu+R_\nu}(x)\, dx
\lc \int_E \sum_\nu|T_\nu g_\nu|^2dx
$$ and also by H\"older's inequality  and \eqref{mz}
the last one in the above string of  inequalities is bounded by
\begin{align*}
\meas(E)^{1-2/p} \Big\|\Big(\sum_\nu|T_\nu g_\nu|^2\Big)^{1/2}\Big\|_p^2
&\lc \|h_\delta\|_{M_p} ^2\Big(\frac{\delta^{-2}}{\log\delta^{-1}}\Big)^{1-2/p}
\Big\|\Big(\sum_\nu| g_\nu|^2\Big)^{1/2}\Big\|_p^2\,.
\end{align*}
Now by  the bounded overlap of the  translated  rectangles
$a_\nu+ R_\nu$, we see
\begin{align*}
\Big\|\Big(\sum_\nu| g_\nu|^2\Big)^{1/2}\Big\|_p^2
\lc \Big(\int\sum_\nu\chi_{a_\nu+R_\nu}dx\Big)^{2/p}
\lc \Big(\sum_\nu|R_\nu|\Big)^{2/p} \lc\delta^{-4/p}.
\end{align*}
Combining the three displayed inequalities we get
$\delta^{-2}\lc \|h_\delta\|_{M_p} ^2 (\delta^{-2}/\log\delta^{-1})^{1-2/p}\delta^{-4/p}$
and thus the desired \eqref{hbound2}.
\end{proof}

\section{Proof of Theorem \ref{onedimp-prec}}\label{d=1}
The lower bounds for $\fA_\la(p;q,r)$ were established in the previous section, and here we prove the upper bounds, mainly by interpolation arguments. By Lemma \ref{smallerintervals}, we can take $I=[1/2,1]$.

\subsection{\it Proof of $(i)$} We consider the cases $\frac1q+\frac1r\ge \frac12$ and $\frac1q+\frac1r< \frac12$ separately.

\subsubsection*{The case $\frac1q+\frac1r\ge \frac12$}
Note that the set
\[\big\{\,(\tfrac1p, \tfrac1q,\tfrac1r)\,:\,  2\le r\le p\le q\le \infty,\quad \tfrac1q+\tfrac1r\ge \tfrac12\,\big\}\]
is the closed tetrahedron with vertices $(\frac14, \frac14, \frac14)$, $(\frac12,\frac12,\frac12)$,
$(\frac12, 0,\frac12)$, and $(0, 0,\frac12)$.  Hence by interpolation  it is enough to show the estimate
\Be\label{pqr1}
\fA_\la(p;q ,r)\lc \la^{\frac1q-\frac1p} [\log \la]^{\frac12-\frac1r}
\Ee
for $ (p, q,r) =(4, 4, 4)$, $(2,2,2)$,
$(2, \infty , 2)$ and $(\infty, \infty, 2)$.   The estimate for $(p,q,r)=(2,2 ,2)$ is immediate from Plancherel's theorem.  More generally we recall from  \cite{lerose} the estimate $\fA_\la(p;p,2)\lc 1$ with
$2\le p\le \infty$,
 which is related to a square-function estimate for
 equally spaced intervals. So we also get the estimates for  $(p,q,r)=(\infty,\infty ,2)$.
For $(2,\infty ,2)$ we choose a nonnegative $\chi_{\rm{o}}\in C_c^\infty(\bbR)$, so that $\chi_{\rm{o}}(t)=1$ on $[1/2,1]$.
We need to estimate, for fixed $x$,
 \begin{equation*}\int \chi_{\rm{o}}(t) |U\eta(\tfrac D\la ) f(x,t)|^2 dt =
\frac{1}{(2\pi)^{2d}}\iint e^{ix(\xi-w)} \widehat
f(\xi)\overline{\widehat f(w) }
\eta(\tfrac{\xi}{\la})\overline{\eta(\tfrac{w}{\la}) }
\widehat{\chi_{\rm{o}}}(|\xi|^2-|w|^2)\, d\xi\,dw
\end{equation*} and since $|\xi|+|w|\ge \lambda$,  the above is bounded by
$$C_N \int_{\R}\int_{\R}  \big(1+\la\big||\xi|-|w |\big|\big)^{-N} |\widehat f(\xi)||\widehat f(w) | \,d\xi\,dw \,\lc\, \la^{-1}\|\widehat f\|_2^2.$$
This gives the desired estimate for $(p,q,r)=(2,\infty ,2)$. For   $(p,q,r)=(4,4,4)$ we use the bound
\[\Big(\iint \Big|\psi(\xi, s )\int_{|y|\le 1} f(y)\, e^{i \lambda( s  |y|^2-\xi y)} f(y)
   \,dy\Big|^4d\xi d s \Big)^{1/4}\lc   \lambda^{-\frac12}(\log\lambda)^\frac14 \|f\|_4,\]
   where $\psi\in C_c^\infty$. This is implicit in \cite{hoer2}
(see also \cite{ms} for more discussion and related issues).
Then by rescaling,  Lemma \ref{resimpliesSchr} and Lemma \ref{smallerintervals}
we get  \eqref{pqr1} for $(p,q,r)=(4,4,4)$.

\subsubsection*{The case   $\frac1q+\frac1r<\frac12$}
We begin as before by
observing  that the set
\[\Delta_1=\big\{\,(\tfrac1p, \tfrac1q,\tfrac1r)\,:\,  2\le r\le p\le q\le \infty,\quad \tfrac1q+\tfrac1r< \tfrac12\,\big\}\]
is the closed tetrahedron with vertices $(0,0,0),$ $(\frac14, \frac14, \frac14)$
$(\frac12, 0,\frac12)$ and $(0, 0,\frac12)$, from which the triangle with vertices $(\frac14, \frac14, \frac14)$
$(\frac12, 0,\frac12)$, and $(0, 0,\frac12)$ is removed.
We use a bilinear analogue of our adjoint restriction operator, and
rely on rather elementary estimates from \cite{hoer2}.
Define $\chi_\ell$ so that $\sum_{\ell\in \bbZ} \chi_\ell\equiv 1$, $\chi_\ell = \chi_1(2^\ell\cdot)$ and
$\chi_1$ is supported in $(2, 8)$. Let
 $$\fB_{\la,\ell}[f,g]\,=\,
\iint_{[-1,1]^2}
e^{i s (|y|^2+|z|^2)-i\frac{ s }{\la^2}\xi({y+z})}\chi_\ell(|y-z|) f(y)g(z) \,
dy dz,
$$
so that
$$
(\cE f \cE f)(\frac{ s }{\lambda^2}\xi,  s ) =\sum_{\ell\ge 0} \fB_{\la,\ell}
(f,f)(\xi, s ).
$$
We shall verify that for $\ell\ge 0$
\Be 
\label{pqrbilinear}
\|\fB_{\la,\ell} (f,g)\|_{L^{q/2}(\cA(\la^2); L^{r/2}[\la^2, 2\la^2])}
 \lc 2^{-2\ell(\frac12-\frac 1q-\frac 1r)}
\|f\|_p \|g\|_p
\Ee 
when  $(\frac1p,\frac1q,\frac1r)$ is contained in the closed tetrahedron with vertices $(0,0,0),$ $(\frac14, \frac14, \frac14)$
$(\frac12, 0,\frac12)$ and $(0, 0,\frac12)$.  By summing a geometric series, this yields \eqref{adjrestrassu}  for $(\frac1p,\frac1q,\frac1r)\in \Delta_1$, which by  Lemmata \ref{resimpliesSchr} and \ref{smallerintervals}, yields the desired
\Be
\label{pqr2}
 \fA_\la(p;q,r)\lc \la^{1-\frac 1p-\frac 1q-\frac 2r}.
 \Ee
We remark that conversely, if \eqref{pqr2} holds,
then
we can use
 Lemma \ref{Schrimpliesrestr} and a Fourier
expansion of $\chi_\ell(y-z)$ to bound
the left hand side of \eqref{pqrbilinear}  by $C\|f\|_p\|g\|_p$,
with $C$ independent of $\ell$.

It remains to show \eqref{pqrbilinear}. By interpolation it is enough to do this with $(p,q,r)=(\infty,\infty,\infty),$ $(4, 4, 4)$
$(2, \infty,2)$, and $(\infty, \infty,2)$. The last two estimates were already obtained;
 note that the bounds \eqref{pqr1} and \eqref{pqr2} coincide
for the cases   $(p,q,r)=(2, \infty,2)$ and $(\infty, \infty,2)$ and  the bounds
for \eqref{pqrbilinear} are independent of $\ell$. Hence  from the  bounds \eqref{pqr1}  previously obtained
and the discussion above  we have the required bounds for
$(p,q,r)=(2, \infty,2)$, and $(\infty, \infty,2)$.
 We  note  that the argument for the proof of the endpoint adjoint restriction  theorem in \cite{hoer2}
gives
\Be \label{44bil}
\|B_{\la,\ell} (f,g)\|_{L^2_{\xi, s }} \lc \|f\|_4 \|g\|_4,
\Ee
uniformly
 in $\ell\ge 0$, where $B_{\la,\ell} (f,g) (\xi, s )=\fB(f,g) (\tfrac{\la^2}{ s }\xi, s )$, and by a change of variables we obtain
\eqref{pqrbilinear}
holds with $(p,q,r)=(4,4,4)$.
To get the inequality \eqref{pqrbilinear}  for $(p,q,r)=(\infty,\infty,\infty)$
we  need to  integrate $\chi_\ell(|y-z|)$ over
$[-1,1]^2$ which yields  the gain of $2^{-\ell}$.

\subsection{\it Proof of $(ii)$} We also consider the cases $1-\frac 1p>
\frac 2q+\frac 1r$ and $1-\frac 1p\le
\frac 2q+\frac 1r$ separately.

\subsubsection*{The case $1-\frac 1p\le
\frac 2q+\frac 1r$} We note that the set
\[ \Delta_2=\big\{\,(\tfrac1p, \tfrac1q,\tfrac1r)\,:\,  2\le p<r\le q\le \infty,\quad \tfrac 2q+\tfrac 1r \ge 1-\tfrac 1p\,\big\}\]
is the closed tetrahedron with vertices $(\frac12,\frac12,\frac12),$
$(\frac14, \frac14, \frac14)$ $(\frac12, \frac16,\frac16)$ and
$(\frac12, 0,\frac12)$, from which the face with vertices
$(\frac12,\frac12,\frac12),$ $(\frac14, \frac14, \frac14)$ and
$(\frac12, 0,\frac12)$ is removed. Note that from the previous
bounds \eqref{pqr1} and \eqref{pqr2}  we already have the required
bounds \Be\label{pqr3}\fA_\la(p;q,r)\lc \lambda^{\frac1q-\frac1r}\Ee
for $(p,q,r)=(2,2,2)$ and $(2,\infty,2)$. Obviously $\Delta_2$ is
contained in  the convex hull of $(\frac12, 0,\frac12)$,
$(\frac12,\frac12,\frac12),$ and the half open line segment
$[(\frac12, \frac16,\frac16), (\frac14, \frac14, \frac14))$. Hence
by it is enough to show \eqref{pqr3} for $(\frac1p,\frac1q,\frac1r)$
contained in the half closed line segment $[(\frac12,
\frac16,\frac16), (\frac14, \frac14, \frac14))$. But these follow
from Lemmata \ref{resimpliesSchr} and \ref{smallerintervals}, combined with the
restriction estimate for the parabola which gives
\eqref{adjrestrassu} for $(\frac1p,\frac1q,\frac1r)\in [(\frac12,
\frac16,\frac16), (\frac14, \frac14, \frac14))$.

\subsubsection*{The case  $1-\frac 1p>
\frac 2q+\frac 1r$}
We note that the set
\[ \big\{\,(\tfrac1p, \tfrac1q,\tfrac1r)\,:\,  2\le p<r\le q\le \infty,\quad \tfrac 2q+\tfrac 1r < 1-\tfrac 1p\,\big\}\]
is contained in the  quadrangular pyramid $\mathcal Q$ with vertices $(0, 0, 0)$,  $(\frac12,0,0)$, $(\frac14, \frac14, \frac14)$
$(\frac12, \frac16,\frac16)$, and $(\frac12, 0,\frac12)$.  We need to show \eqref{pqr2} for $(\frac1p,\frac1q,\frac1r)$ contained in the above set. Repeating the above argument,  the asserted estimates follow if we establish,  for $\ell\ge 0$ and $(\frac1p,\frac1q,\frac1r)\in \mathcal Q$,
\Be
\label{pqrbilsecond}
\|\fB_{\la,\ell} (f,g)\|_{L^{q/2}(\cA(\la^2); L^{r/2}[\la^2, 2\la^2])}
 \lc 2^{-\ell(1-\frac 1p -\frac 2q-\frac 1r)}
\|f\|_p \|g\|_p.
\Ee
We only need to verify it for
 $(p,q,r)=(\infty, \infty, \infty)$,  $(4, 4, 4)$,  $(2, \infty,2)$,
$(2, 6,6)$, and $(2,\infty,\infty)$. The first three cases were
already obtained when we showed \eqref{pqrbilinear}, and the case
$(p,q,r)=(2, 6,6)$ follows from the linear adjoint restriction
estimate for the parabola as before. Finally the case
$(p,q,r)=(2,\infty,\infty)$ with a gain of $2^{-\ell/2}$ follows
from the Schwarz inequality, and so we are done.


\section{Sharper regularity results}\label{fifth}

\subsection{\it Combining frequency localized  pieces}\label{cflp}
One can use the uniform regularity results for the frequency localized pieces
to prove sharper bounds such as  Sobolev estimates by using arguments
based on the Fefferman--Stein \#-function.
Let $\varphi$ be a radial  smooth function supported in
$\{\xi:1/4<|\xi|<4\}$, not identically $0$. Let $I=[-1,1]$ and  \Be\label{Gammadef}
\Gamma(p;q,r)=\sup_{\la>1} \la^{-d(-\frac 1p-\frac 1q)+\frac 2r} \big\|U
\varphi \big(\tfrac{D}{\la}\big)\big\|_{L^p\to L^q(\mathbb R^d;L^r(I))}
\Ee
It is not hard to verify that  the finiteness of
$\Gamma(p;q,r)$ is independent of the particular choice of $\varphi$.
The following statement is a special case of the result in the appendix of \cite{lerose}.
\begin{proposition}\label{combin} Let $p_0,q_0,r_0\in [1,\infty]$,
$q\in(q_0,\infty)$, $r_0\le r<\infty$, $p_0\le q_0$ and assume
$1/p_0-1/q_0=1/p-1/q$.
Suppose that
$\,\Gamma(p_0;q_0,r_0)<\infty.$
Then
\begin{equation}
\label{dyadSchrqr}
\Big\|\, \Big(\int_I|U\!f(\cdot,t) |^r dt\Big)^{1/r}
\Big\|_{L^q(\mathbb{R}^d)} \lc
\|f\|_{B^p_{\alpha,q}(\bbR^d)}\,,
\quad
\alpha=d\big(1-\tfrac 1p-\tfrac 1q\big)-\tfrac 2r\, .
\end{equation}
If $f\in B^p_{s,q}(\bbR^d)$ with $s=d(1-\tfrac 1p-\tfrac 1q)$, then for almost every $x\in \bbR^d$ the function
$t\mapsto U^a f(x,t)$ is  locally in $B^q_{1/q,\nu}(\bbR)$, and (thus)
continuous, and
\begin{equation*}
\big\|\sup_{t\in I} |U \!f(\cdot,t)|\,  \big\|_{L^q(\bbR^d)}
 \lc
\|f\|_{B^p_{s,q}(\bbR^d)}, \quad s=d\big(1-\tfrac 1p-\tfrac 1q\big)\,.
\end{equation*}
\end{proposition}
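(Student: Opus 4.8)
This statement is a special case of the general result proved in the appendix of \cite{lerose}; I would argue as follows. \emph{Step 1: uniform frequency-localized estimates at the exponents $(p,q,r)$.} The first task is to deduce from $\Gamma(p_0;q_0,r_0)<\infty$ the bound
\[
\Big\|\Big(\int_I|e^{it\Delta}\varphi(2^{-k}D)g|^r\,dt\Big)^{1/r}\Big\|_{L^q(\bbR^d)}\lc 2^{k\alpha}\|g\|_p,\qquad \alpha=d\big(1-\tfrac1p-\tfrac1q\big)-\tfrac2r,
\]
uniformly in $k\ge 0$. Since $e^{it\Delta}\varphi(2^{-k}D)g$ has spatial frequencies of size $\approx 2^k$, hence time frequencies $\approx 2^{2k}$, Bernstein's inequality in $t$ passes from the $L^{r_0}(I)$-norm to the larger $L^r(I)$-norm at the cost of a factor $2^{2k(1/r_0-1/r)}$, which is exactly the change of $\alpha$ produced by replacing $r_0$ with $r$; and real interpolation of the hypothesis with the elementary dispersive bounds extracted from the kernel estimates \eqref{expl}--\eqref{offexcset} (which yield $\Gamma(p_0;q_0,\infty)<\infty$ and $\Gamma\big((1/p_0-1/q_0)^{-1};\infty,\infty\big)<\infty$) moves $(p_0,q_0)$ to $(p,q)$ along the line $1/p-1/q=1/p_0-1/q_0$. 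The conditions $p_0\le q_0$, $q_0<q$ and $r\ge r_0$ are precisely what keep these operations inside the admissible range; I refer to \cite{lerose} for the bookkeeping.

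\emph{Step 2: passage to $B^p_{\alpha,q}$ via the sharp maximal function.} Write $Uf=\sum_{k\ge 0}UP_kf$, set $V_k(x)=\big(\int_I|UP_kf(x,t)|^r\,dt\big)^{1/r}$, and note $\|V_k\|_{L^q}\lc 2^{k\alpha}\|P_kf\|_p$ by Step 1. For a cube $Q$ split $Uf$ into its parts of spatial frequency $\lesssim\ell(Q)^{-1}$ and $\gtrsim\ell(Q)^{-1}$: the oscillation of the low part over $Q$ is dominated by $\sum_{2^k\le\ell(Q)^{-1}}2^k\ell(Q)\,MV_k(x)\lc\sup_k MV_k(x)$ after summing the geometric factor, while the local average of the high part against a smooth bump adapted to $Q$ essentially vanishes by disjointness of Fourier supports; this gives the pointwise bound $(Uf)^\#(x)\lc\sup_k MV_k(x)$ for the $L^r(I)$-valued sharp function, where $M$ is the Hardy--Littlewood maximal operator. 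Taking $f$ Schwartz by density (so $Uf\in L^{q_0}(\bbR^d;L^r(I))$ a priori, because $|Uf(x,t)|$ decays rapidly in $x$ uniformly for $t\in I$), the Fefferman--Stein inequality $\|h\|_{L^q}\lc\|h^\#\|_{L^q}$ (here $q>q_0$ enters), the bound $\|\sup_k MV_k\|_{L^q}\lc\|M(\sup_k V_k)\|_{L^q}\lc\|\sup_k V_k\|_{L^q}$, and the trivial estimate $\sup_k V_k\le\big(\sum_k V_k^q\big)^{1/q}$ combine to give
\[
\Big\|\Big(\int_I|Uf(\cdot,t)|^r\,dt\Big)^{1/r}\Big\|_{L^q}\lc\Big(\sum_k\|V_k\|_{L^q}^q\Big)^{1/q}\lc\Big(\sum_k 2^{k\alpha q}\|P_kf\|_p^q\Big)^{1/q}=\|f\|_{B^p_{\alpha,q}},
\]
which is \eqref{dyadSchrqr}. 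It is exactly here, in the step $\sup_k V_k\le(\sum_k V_k^q)^{1/q}$ made possible by the sharp function, that the $\ell^q$-summation (the index $q$ in $B^p_{\alpha,q}$) rather than the $\ell^1$ bound of the triangle inequality or the $\ell^2$ bound of Littlewood--Paley theory appears; this is the main point of the argument, and the one genuinely delicate ingredient — the rest of Steps 1 and 3 being routine interpolation and Bernstein-type manipulations.

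\emph{Step 3: the maximal estimate and the regularity in $t$.} For $r=\infty$, i.e. $s=d(1-1/p-1/q)$, set $W_k(x)=\sup_{t\in I}|UP_kf(x,t)|$; Bernstein's inequality in $t$ (time frequency $\approx 2^{2k}$) gives $W_k(x)\lc 2^{2k/\rho}\big(\int_{I'}|UP_kf(x,t)|^\rho\,dt\big)^{1/\rho}$ for any finite $\rho$ and a slightly larger interval $I'$, hence $\|W_k\|_{L^q}\lc 2^{2k/\rho}2^{k(s-2/\rho)}\|P_kf\|_p=2^{ks}\|P_kf\|_p$ by Step 1. Since the $W_k$ are, in the maximal-function sense relevant to Step 2, effectively frequency-localized at scale $2^k$, the sharp-function argument applies with $L^r(I)$ replaced by $L^\infty(I)$ and yields $\|\sup_{t\in I}|Uf(\cdot,t)|\|_{L^q}\lc\|f\|_{B^p_{s,q}}$. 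For the regularity in $t$, let $\chi\in C^\infty_c(\bbR)$ with $\chi=1$ on $I$ and let $Q_j$ be a Littlewood--Paley decomposition in $t$; since time frequency $2^j$ corresponds to spatial frequency $2^{j/2}$ on the paraboloid one has $Q_{2k}(\chi\,Uf)\approx\chi\,UP_kf$ (modulo rapidly decaying errors), so by Step 2 applied with $r=q$,
\[
\int_{\bbR^d}\|\chi\,Uf(x,\cdot)\|_{B^q_{1/q,q}(\bbR)}^q\,dx=\sum_j 2^j\|Q_j(\chi\,Uf)\|_{L^q(\bbR^{d+1})}^q\lc\sum_k 2^{ksq}\|P_kf\|_p^q=\|f\|_{B^p_{s,q}}^q,
\]
so $t\mapsto Uf(x,t)$ lies locally in $B^q_{1/q,\nu}(\bbR)$ for almost every $x$; continuity of $t\mapsto Uf(x,t)$ for a.e. $x$ then follows by approximating $f$ by $\sum_{k\le N}P_kf$ in $B^p_{s,q}$ and invoking the maximal estimate, since each $U\big(\sum_{k\le N}P_kf\big)(x,\cdot)$ is continuous and the convergence is uniform in $t$ for a.e. $x$.

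To summarize the plan: Step 1 reduces matters to frequency-localized estimates at the required exponents by interpolation and Bernstein; Step 2 performs the Fefferman--Stein sharp-function argument, which is where the matching Besov index $q$ is gained and which is the crux; Step 3 treats the endpoint $r=\infty$ by Bernstein in time together with the same sharp-function machinery, and reads off the time regularity and continuity. The one place I expect real care to be needed is the pointwise sharp-function bound and the accompanying $\ell^q$-bookkeeping in Step 2, exactly the content carried out in the appendix of \cite{lerose}.
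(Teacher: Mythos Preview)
The paper itself does not prove this proposition; it simply cites the appendix of \cite{lerose}. Your sketch attempts to reconstruct that argument, and it has two connected problems.

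\textbf{Step 1 overclaims.} You assert that interpolation yields the frequency-localized bound at the target exponents, i.e.\ $\Gamma(p;q,r)<\infty$, from $\Gamma(p_0;q_0,r_0)<\infty$. The auxiliary bound $\Gamma(p_1;\infty,\infty)<\infty$ with $1/p_1=1/p_0-1/q_0$ does hold (for $p_1\ge 2$) from the dispersive kernel estimate, but interpolating it against $\Gamma(p_0;q_0,r)$ moves $q$ and $r$ in lockstep (both scale by $1/(1-\theta)$), so you only reach $r'=rq/q_0$, never the target $r$ when $r<r_0q/q_0$. The endpoint that would fix this, $\Gamma(p_1;\infty,r)<\infty$ for finite $r$, is false: $\|K^\lambda_t\|_{p_1'}\sim\lambda^{d/p_1'}$ for $t\sim 1$, which is off from the required $\lambda^{d/p_1'-2/r}$ by exactly $\lambda^{2/r}$. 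More tellingly, if $\Gamma(p;q,r)<\infty$ genuinely followed from the hypothesis, the Littlewood--Paley argument of Theorem~\ref{besovextequiv} would already give the \emph{stronger} conclusion with Besov index $2$ rather than $q$, and Step~2 would be superfluous. The proposition is interesting precisely because the frequency-localized estimate at the target $(p,q,r)$ need not hold.

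\textbf{Step 2 mishandles the high frequencies.} Even granting Step~1, the pointwise bound $(Uf)^\#(x)\lesssim\sup_k MV_k(x)$ is not established, and is false in general. On a cube $Q$ with $\ell(Q)\sim 2^{-j}$ and $k>j$, you say the high-frequency part ``essentially vanishes by disjointness of Fourier supports'' when averaged against a bump. That controls $\int\psi_Q\,UP_kf$, not $\tfrac{1}{|Q|}\int_Q V_k$; the sharp function needs the latter. Each high-frequency piece contributes $MV_k(x)$ with no decay in $k-j$, so the sum over $k>j$ does not collapse to a supremum. (Concretely: take $N$ pieces $UP_kf$ with $V_k\equiv 1$ near the origin; then $\sup_k MV_k\sim 1$ but $(Uf)^\#$ is of order $\sqrt N$ on unit cubes.)

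In the argument of \cite{lerose} the two issues are resolved simultaneously: one works with the $q_0$-sharp function, so that $\|F\|_{L^q}\lesssim\|F^{\#}_{q_0}\|_{L^q}$ for $q>q_0$, and on each cube the high-frequency contribution is bounded by applying the hypothesis $\Gamma(p_0;q_0,r_0)<\infty$ \emph{locally}, after spatially localizing the input. The H\"older factors arising from that localization are exactly what convert $(p_0,q_0)$ into $(p,q)$ along the line $1/p-1/q=1/p_0-1/q_0$. The change of exponents takes place \emph{inside} the sharp-function step, not as a preliminary interpolation; separating it out as you do in Step~1 is what creates both gaps.
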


The  Sobolev estimates follow from this since for $q\ge p\ge 2$ one has
$L^p_\alpha\subset B^p_{\alpha,p} \subset B^p_{\alpha,q}$. We note that the result in \cite{lerose} is slightly sharper. Namely the left hand side of
\eqref{dyadSchrqr} can be replaced by the $L^q(\bbR^d)$ norm of
$(\sum_{k>0} (\int_I |P_k U\!f(\,\cdot\,,t) |^r dt)^{\nu/r})^{1/\nu}$, where $\nu>0$.

\begin{proof}[Proof of Corollaries
\ref{sobcor1} and \ref{SobCor2}]
Proposition \ref{combin} implies  the validity of  the corollaries
given their analogues for
frequency localized functions (namely Theorems \ref{onedimp-prec} and
\ref{besov-vs-ext}).  For Corollary \ref{SobCor2} we use that $\text{\rm R}^*(p_0\to q_0)$ implies $\text{\rm R}^*(p\to q_0)$ for all $p\ge p_0$.
\end{proof}

\subsection{\it A remark on recent results by
Bourgain and Guth}\label{bgpq}
As mentioned in the introduction, the recent results in  \cite{bogu} on
$\text{\rm R}^*(q\to q)$
give  results  on the sharp $L^q_\alpha\to L^q(\bbR^d\times I)$
boundedness of $U$. In a restricted range they also imply new results
on $R^*(p\to q)$ with the best possible $p=p(q)$ which Tao \cite{ta} had proved
for $q>\frac{2(d+3)}{d+1}$, and likewise one then obtains corresponding results for the Schr\"odinger operator.
The following statement  is proved by
a simple interpolation argument for bilinear operators.

\begin{proposition}\label{bg}
Suppose that  $\text{\rm R}^*(q_0\to q_0)$
 holds for some
$q_0\in (2,\frac{2(d+3)}{d+1})$.  Then

\noindent (i) $\text{\rm R}^*(p\to q)$ holds with
$q=\frac{d+2}{d} p'$
provided that
\begin{equation*}
q>q_*:=\frac{2(d+3)}{d+1}\Big(1-\gamma(d,q_0)\Big),\quad
\text{ where }\ \gamma(d,q_0)=
\frac{\frac{1}{q_0}-\frac{d+1}{2(d+3)}}
{\frac{d+1}{2d}-\frac{d+2}{dq_0}}.
\end{equation*}

\noindent (ii) Let $q_*<q<\infty$, $q\le r\le \infty$
and suppose that $0\le \frac{1}{p}-\frac{1}{q}<
1-\frac{2(d+1)}{dq_*}$.
 Then $U:
L^p_{\alpha}(\bbR^d)\to L^q(\bbR^{d};L^r(I))$ is bounded with
$\alpha= d\big(1-\frac 1p-\frac 1q\big)-\frac 2r$.
\end{proposition}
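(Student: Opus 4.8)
The plan is to derive part~(ii) from part~(i) together with Corollary~\ref{SobCor2}, so essentially all of the work is in part~(i), which is a statement about the adjoint restriction operator alone. For part~(i), I would combine the bilinear-to-linear reduction of Tao--Vargas--Vega with Tao's sharp bilinear restriction estimate \cite{ta}, the new ingredient being an interpolated bilinear input. First, the hypothesis $\text{R}^*(q_0\to q_0)$ yields, by Cauchy--Schwarz and writing $\cE_\tau$ for $\cE$ applied to functions supported on a cap $\tau$ of the paraboloid,
\[
\big\|\cE_{\tau_1}f_1\,\cE_{\tau_2}f_2\big\|_{L^{q_0/2}(\bbR^{d+1})}\le \|\cE_{\tau_1}f_1\|_{q_0}\|\cE_{\tau_2}f_2\|_{q_0}\lesssim\|f_1\|_{q_0}\|f_2\|_{q_0}
\]
for all transverse caps $\tau_1,\tau_2$; parabolic rescaling promotes this to the corresponding estimate at each dyadic scale, with an explicitly computable scaling exponent. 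Second, Tao's bilinear estimate \cite{ta} provides $\|\cE_{\tau_1}f_1\,\cE_{\tau_2}f_2\|_{L^{\rho}(\bbR^{d+1})}\lesssim\|f_1\|_{2}\|f_2\|_{2}$ for transverse caps and all $\rho>\tfrac{d+3}{d+1}$. Since the first estimate lives at exponents below the Tao range, bilinear complex interpolation (Riesz--Thorin for the bilinear extension operator, with the two caps held fixed) produces a one-parameter family of transverse bilinear estimates joining them.

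Next I would insert this family into the Whitney-decomposition step of the bilinear method: decompose $\cE F\cdot\cE F=\sum_{j}\sum_{\tau_1,\tau_2}\cE_{\tau_1}F\,\cE_{\tau_2}F$ over Whitney pairs of $2^{-j}$-caps, rescale each pair to unit scale, apply the interpolated bilinear estimate, and sum over the pairs using the bounded overlap of the Fourier supports and then over the scales $j$. One then obtains the linear estimate $\cE\colon L^p\to L^q$ in precisely the range of $(p,q)$ for which the geometric series over $j$ converges: for $p=2$ this is $q>\tfrac{2(d+3)}{d+1}$ (recovering Tao's theorem), and with the interpolated input one reaches the critical line $q=\tfrac{d+2}{d}p'$ for every $q$ strictly above $q_*$, the value $q_*$ and the quantity $\gamma(d,q_0)$ emerging from the optimization over the interpolation parameter together with sending $\rho\downarrow\tfrac{d+3}{d+1}$. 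Since the relation $q=\tfrac{d+2}{d}p'$ is affine in the coordinates $(\tfrac1p,\tfrac1q)$ it is preserved under interpolation, so once one critical-line estimate with $q$ arbitrarily close to $q_*$ is available, interpolating it against Tao's estimates ($q>\tfrac{2(d+3)}{d+1}$) fills in the entire range $q>q_*$, which is the assertion of~(i).

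For part~(ii), fix $q_1\in(q_*,\tfrac{2(d+3)}{d+1})$. By~(i), $\text{R}^*(p(q_1)\to q_1)$ holds, and since $L^p(\{|y|\le1\})\subseteq L^{p(q_1)}(\{|y|\le1\})$ for $p\ge p(q_1)$, also $\text{R}^*(p\to q_1)$ holds for all $p\ge p(q_1)$. Applying Corollary~\ref{SobCor2} with the pair $(p_0,q_0)$ there equal to $(p(q_1),q_1)$ gives
\[
\|Uf\|_{L^q(\bbR^d;L^r(I))}\lesssim\|f\|_{B^p_{\alpha,q}(\bbR^d)},\qquad \alpha=d\big(1-\tfrac1p-\tfrac1q\big)-\tfrac2r,
\]
for all $q>q_1$, $q\le r\le\infty$ and $0\le\tfrac1p-\tfrac1q\le\tfrac{1}{p(q_1)}-\tfrac1{q_1}=1-\tfrac{2(d+1)}{dq_1}$. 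Letting $q_1\downarrow q_*$ yields exactly the stated range $q>q_*$, $q\le r\le\infty$, $0\le\tfrac1p-\tfrac1q<1-\tfrac{2(d+1)}{dq_*}$; and there $q>\max\{2,p\}$ (using $p\le q$ and $q>q_1>2$), so $L^p_\alpha\subseteq B^p_{\alpha,q}$ and the estimate descends to $L^p_\alpha$-data.

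The main obstacle lies entirely in part~(i): carrying the parabolic-rescaling exponent of the interpolated bilinear estimate through the Whitney summation accurately enough to pin down the exact range of $q$ in which the sum over scales converges. This bookkeeping is what produces the precise formula for $q_*$ and $\gamma(d,q_0)$, as well as the strict inequality rather than $q\ge q_*$.
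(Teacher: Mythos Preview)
Your approach is essentially the same as the paper's. The only noteworthy difference is packaging: for part~(i) the paper invokes Theorem~2.2 of \cite{tavave} as a black box (bilinear restriction with separated caps implies linear restriction), so the entire argument reduces to interpolating the bilinear estimate coming from the hypothesis $\text{R}^*(q_0\to q_0)$ (via H\"older) against Tao's bilinear theorem, then solving two linear equations for the interpolation parameter~$\theta$ and for~$q_*$; you instead propose to unpack the Whitney decomposition and scale summation by hand, which is precisely what the cited theorem encodes. For part~(ii) the paper appeals directly to Theorem~\ref{besov-vs-ext} and Proposition~\ref{combin}, whereas you route through Corollary~\ref{SobCor2} (which is itself derived from those two results) together with a limiting argument $q_1\downarrow q_*$; these are equivalent, and your observation that $L^p_\alpha\subset B^p_{\alpha,q}$ in the relevant range is exactly how the paper passes from Besov to Sobolev data.
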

In two dimensions $\text{\rm R}^*(q\to q)$
was proven in \cite{bogu} for $q>56/17$ and the sharp
inequality
$\text{\rm R}^*(p\to q)$  for $q=2p'$ follows for $q>13/4$.

\begin{proof}[Proof of Proposition \ref{bg}]
By
Theorem \ref{besov-vs-ext} and Proposition \ref{combin}
 it suffices to prove the first part.

Let $E_1$ and $E_2$ be $1/2$-separated sets in the unit ball of $\bbR^d$
and define $\cE_i f=\cE[f\chi_{E_i}]$.
By Theorem 2.2 in \cite{tavave}, it suffices to prove the estimate
\Be\label{bilBGsharp}
\big\|\cE_1 f_1 \cE_2 f_2\big\|_{q/2}\lc \|f_1\|_p\|f_2\|_p\
\Ee
for $q>q_*$ and $p$ in a neighbourhood of $\frac{dq}{dq-d-2}$ (i.e. the $p$ which satisfies $q=\frac{d+2}{d} p'$).

By hypothesis and H\"older's inequality,  \eqref{bilBGsharp}  holds with $p\ge q=q_0$.
By Tao's theorem
 \eqref{bilBGsharp}  holds with $p\ge 2$ and $q/2> \frac{d+3}{d+1}$.
The theorem then follows by interpolation of bilinear operators.
Indeed, we determine $\theta\in (0,1)$ and $q_*\in (q_0,\frac{2(d+3)}{d+1})$ by
$$\frac{1-\theta}2+\frac\theta{q_0}= 1-\frac{d+2}{dq_*}, \qquad
(1-\theta)  \frac{d+1}{d+3}+\theta
\frac 2{q_0} = \frac {2}{q_{*}}.
$$
We compute $\theta =
\big(\frac{d+2}{dq_*}-\frac 12\big)/ \big(\frac12-\frac{1}{q_0}\big)$
and $\theta=\big(\frac{1}{q_*}-\frac{d+1}{2(d+3)}\big)/\big(\frac{1}{q_*}-\frac{d+1}{2(d+3)}\big),$ from which we obtain
$1/{q_*}= \big(\frac{d+1}{2(d+3)}-\frac {b}{2}\big)\big/
\big(1-\frac{d+2}{d}b\big)$  with
$b=\big(\frac{1}{q_0}-\frac{d+1}{2(d+3)}\big)\big/
\big(\frac 12-\frac 1{q_0}\big)$. A further computation shows that $q_* $ is equal to  $\frac{2(d+3)}{d+1}\big(1-\gamma(d,q_0)) $ as in the statement of the lemma.
\end{proof}

\section{Proof of Theorem \ref{pop1}}\label{165sect}
\noi {\bf Definition.} Fix $d\ge 1$, and let $p,q,r\in [2,\infty]$.
For $N> 1$, let
 $$\Lambda_{p,q,r}(N,\rho)\equiv \Lambda_{p,q,r}(N,\rho;d)
=\sup
\big\|Uf_1\, Uf_2\|_{L^{q/2}(\bbR^d, L^{r/2}[0,\rho])}
$$
where the supremum is taken over all pairs of function $(f_1,f_2)$
whose  Fourier transforms are supported in $1$-separated subsets of
$\{\xi: |\xi-Ne_1|\le 2d\}$, and
which satisfy
$\|f_1\|_p, \|f_2\|_p\le 1$.

We remark that the unit vector $e_1$ does not play a special role here. It could be replaced by any unit vector, by rotational invariance.

By considering two bump functions, it is easy to calculate that
\Be\label{lowertriv}
\Lambda_{p,q,r}(N,\rho) \gc
 N^{\frac 2q-\frac 2r},\quad 1\le p,q,r\le \infty,
\Ee
whenever $\rho>1$,
and significant for Theorem \ref{pop1} is  the  following two dimensional estimate,
\Be\label{lrv-result}
\sup_{\rho>1}\Lambda_{2,q,r}(N,\rho;2) \lc
 N^{\frac 2q-\frac 2r}, \quad q>16/5, \quad r\ge 4\,,
\Ee
which was proven in \cite{lerova}
(see also \cite{le1} and \cite{ro} for related previous results).
We will combine this with the following two lemmata.
\begin{lemma}\label{2sthenps}
Let $p_0\le p\le q\le r$ and $\eps_{\rm{o}}>0$. Then, for $N,\rho>1$, \Be\label{p0pest}
\Lambda_{p,q,r}(N,\rho) \lc N^{\eps_{\rm{o}}}
\rho^{2d(\frac{1}{p_0}-\frac
1p)} \Lambda_{p_0,q,r}(N,\rho)\,.
\Ee
\end{lemma}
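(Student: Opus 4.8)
The plan is to localise the data in physical space at the scale $\rho$, to use that over the time interval $[0,\rho]$ the Schr\"odinger evolution of a function with Fourier support in $B(Ne_1,2d)$ is, modulo the common drift $x\mapsto x-2tNe_1$, essentially concentrated on an $O(\rho)$--neighbourhood of the support of the datum, and then to estimate each surviving (diagonal) interaction in the bilinear product $Uf_1\,Uf_2$ by $\Lambda_{p_0,q,r}(N,\rho)$ after an application of H\"older's inequality. We may assume $p<\infty$, since $p=\infty$ forces $q=r=\infty$ and the statement is then trivial, and we may assume $\rho$ larger than a fixed absolute constant, since otherwise $\Lambda_{p,q,r}(N,\rho)\lc\Lambda_{p,q,r}(N,2)$ after a harmless rescaling. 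First I would normalise by a Galilean transformation: writing $f_i=e^{iNx_1}g_i$, the datum $g_i$ has Fourier support in $B(0,2d)$ and $|Uf_i(x,t)|=|(L_t*g_i)(x-2tNe_1)|$, where $L_t=\cF^{-1}[\chi\,e^{-it|\cdot|^2}]$ for a fixed bump $\chi$ with $\chi\equiv1$ on $B(0,2d)$; integrating by parts in the frequency variable gives $|L_t(v)|\le C_M(1+|v|)^{-M}$ whenever $t\in[0,\rho]$ and $|v|\ge C\rho$, while $\|L_t\|_1\lc\rho^{d/2}$. Then I would fix a smooth partition of unity $1=\sum_{k\in\bbZ^d}\zeta_k$ with $\zeta_k(y)=\zeta(y/\rho-k)$, $\zeta\ge 0$ Schwartz and $\widehat{\zeta}$ supported in $\{|\xi|\le c_d/\rho\}$, and set $g_i^{(k)}=g_i\zeta_k$, $f_i^{(k)}=e^{iNx_1}g_i^{(k)}$, so that $f_i^{(k)}$ has Fourier support in $B(Ne_1,2d+c_d/\rho)$; up to an inconsequential enlargement of the ambient frequency ball and a change of the separation constant in the definition of $\Lambda$ (neither affecting its size up to constants) this still allows invoking $\Lambda_{p_0,q,r}(N,\rho)$ for pairs $(f_1^{(k)},f_2^{(l)})$. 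The crucial structural point is that for each fixed $t\in[0,\rho]$ the function $Uf_i^{(k)}(\cdot,t)$ is, up to a rapidly decaying tail, concentrated on $B(\rho k+2tNe_1,C\rho)$, with the \emph{same} translation $2tNe_1$ for $i=1,2$ and for all $k$, so that in $Uf_1\,Uf_2=\sum_{k,l}Uf_1^{(k)}Uf_2^{(l)}$ the essential contribution comes from the pairs with $|k-l|\le C$, and for those, at any $(x,t)$ at most $O_d(1)$ are simultaneously non‑negligible.

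For the diagonal term, using the bounded overlap in $x$ at each fixed time, $q\le r$ (so that $\ell^{q/2}\hookrightarrow\ell^{r/2}$) and $q/2,r/2\ge1$, one gets
\Be\label{prop:mainsplit}
\Big\|\sum_{k}\sum_{|k-l|\le C}Uf_1^{(k)}Uf_2^{(l)}\Big\|_{L^{q/2}(\bbR^d;L^{r/2}[0,\rho])}\lc\Big(\sum_{k}\sum_{|k-l|\le C}\big\|Uf_1^{(k)}Uf_2^{(l)}\big\|_{L^{q/2}(L^{r/2}[0,\rho])}^{q/2}\Big)^{2/q}.
\Ee
By the definition of $\Lambda_{p_0,q,r}(N,\rho)$ the $(k,l)$--summand is $\le\Lambda_{p_0,q,r}(N,\rho)\|f_1^{(k)}\|_{p_0}\|f_2^{(l)}\|_{p_0}$, and H\"older's inequality on the cube carrying $\zeta_k$, in the form $\int|g_i|^{p_0}|\zeta_k|^{p_0}\le\big(\int|g_i|^{p}|\zeta_k|^{p_0}\big)^{p_0/p}\big(\int|\zeta_k|^{p_0}\big)^{1-p_0/p}$, yields $\|f_i^{(k)}\|_{p_0}\lc\rho^{d(1/p_0-1/p)}\big(\int|g_i|^{p}|\zeta_k|^{p_0}\big)^{1/p}$. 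Inserting this, using $2\|f_1^{(k)}\|_{p_0}\|f_2^{(l)}\|_{p_0}\le\|f_1^{(k)}\|_{p_0}^2+\|f_2^{(l)}\|_{p_0}^2$ and that each $k$ admits $O_d(1)$ admissible $l$, then $q\ge p$ (so that $\sum_k b_k^{q/p}\le(\sum_k b_k)^{q/p}$ with $b_k=\int|g_i|^p|\zeta_k|^{p_0}$) together with $\sum_k|\zeta_k|^{p_0}\lc_d1$, the right side of \eqref{prop:mainsplit} is bounded by $C\,\rho^{2d(1/p_0-1/p)}\Lambda_{p_0,q,r}(N,\rho)\|f_1\|_p\|f_2\|_p$, which is admissible.

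What remains, and where I expect the only genuine difficulty, is the off‑diagonal part $\sum_{k}\sum_{|k-l|>C}Uf_1^{(k)}Uf_2^{(l)}$. Because we are forced to use Schwartz (rather than compactly supported) cutoffs $\zeta_k$ in order to keep the pieces frequency localised, the tails of $Uf_i^{(l)}$ decay only at scale $\rho$, namely like $(1+\mathrm{dist}(\,\cdot\,,\rho l+2tNe_1)/\rho)^{-M}$, so the non‑neighbouring products are small but not literally zero; this uncertainty‑principle mismatch is the technical heart of the argument. The plan is to organise these pairs by the offset $n=l-k$: for each fixed $n$ the translates $\rho k+2tNe_1$ still have bounded overlap, so an inequality of the type \eqref{prop:mainsplit} applies to $\sum_k Uf_1^{(k)}Uf_2^{(k+n)}$, while the spatial separation of order $\rho|n|$ of the two factors produces, via the kernel decay above, a crude bound for $\|Uf_1^{(k)}Uf_2^{(k+n)}\|_{L^{q/2}(L^{r/2}[0,\rho])}$ that decays like $(1+|n|)^{-M}$ at the cost of a fixed power of $\rho$ and of $N$; interpolating this crude bound (to a small power $\theta$) with the bound $\Lambda_{p_0,q,r}(N,\rho)\|f_1^{(k)}\|_{p_0}\|f_2^{(k+n)}\|_{p_0}$ (to the power $1-\theta$) gives a summable decay $(1+|n|)^{-\theta M}$ provided $\theta M>d$. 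Summing in $n$, and choosing $\theta$ small and $M$ large in terms of $\eps_{\rm{o}}$ (so that the $\theta$‑th power of the crude bound's $N$‑factor is $\lesssim N^{\eps_{\rm{o}}}$), one obtains the off‑diagonal contribution bounded by $N^{\eps_{\rm{o}}}\rho^{2d(1/p_0-1/p)}\Lambda_{p_0,q,r}(N,\rho)$, using also the trivial lower bound $\Lambda_{p_0,q,r}(N,\rho)\gc N^{2/q-2/r}\gc1$ coming from \eqref{lowertriv}; it is exactly here that the slack $N^{\eps_{\rm{o}}}$ in the statement is spent. Combining this with the diagonal estimate completes the proof. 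The step I anticipate to require the most care is verifying the decaying crude bound for a single off‑diagonal product and tracking the powers of $\rho$ and $N$ accurately enough to see the absorption into $N^{\eps_{\rm{o}}}\rho^{2d(1/p_0-1/p)}\Lambda_{p_0,q,r}(N,\rho)$; everything else is routine bookkeeping.
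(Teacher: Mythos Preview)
Your overall plan---localise both inputs at spatial scale $\rho$ and exploit that both Schr\"odinger pieces drift by the \emph{same} translation $x\mapsto x-2tNe_1$---is exactly the idea behind the lemma, and is what the paper does.  The execution differs, and your version has a gap.

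The gap is inequality \eqref{prop:mainsplit}.  You justify it by ``bounded overlap in $x$ at each fixed time'', but with Schwartz cutoffs $\zeta_k$ the diagonal products $Uf_1^{(k)}Uf_2^{(l)}$ do not have boundedly overlapping supports; they merely decay away from the parallelepipeds $\{(x,t):|x-2tNe_1-\rho k|\lc\rho\}$.  Getting from pointwise ``essential concentration'' to the $\ell^{q/2}$ norm inequality in \eqref{prop:mainsplit} requires precisely the tail analysis you postpone to the off-diagonal discussion, so as written the diagonal step is not complete.  (Your off-diagonal sketch via interpolation is plausible---the $N$-power you anticipate in the crude bound does appear, coming from the shear $(x,t)\mapsto(x-2tNe_1,t)$ when one computes an $L^{q/2}_xL^{r/2}_t$ norm with $q<r$---but it is only an outline.)

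The paper sidesteps both issues by reversing the order of decomposition.  It first tiles the \emph{output} $\bbR^d\times[0,\rho]$ by genuinely disjoint parallelepipeds $\cP_\nu=\{(x,t):x-2tNe_1\in\cQ_\nu\}$ over a grid of $\rho$-cubes $\cQ_\nu$; since $q\le r$ one has $\|F\|_{L^{q/2}(L^{r/2})}^{q/2}\le\sum_\nu\|\chi_{\cP_\nu}F\|_{L^{q/2}(L^{r/2})}^{q/2}$ exactly, with no tails to control.  For each $\nu$ it then splits the \emph{input} by the \emph{sharp} cutoff $f_i=f_i\chi_{\cQ_\nu^*}+f_i\chi_{\bbR^d\setminus\cQ_\nu^*}$, where $\cQ_\nu^*$ is the concentric cube of side $\approx\rho N^\eps$; sharp spatial cutoffs are permitted because the frequency localisation is carried by fixed smooth multipliers $\eta_i$, not by the data pieces.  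The main term is then handled exactly as you do (H\"older from $p_0$ to $p$, then $p\le q$), and the $N^{\eps_{\rm o}}$ loss comes entirely from the enlarged cube, via $\|f_i\chi_{\cQ_\nu^*}\|_{p_0}\lc(\rho N^\eps)^{d(1/p_0-1/p)}\|f_i\chi_{\cQ_\nu^*}\|_p$ and the $O(N^{\eps d})$ overlap of the $\cQ_\nu^*$.  For the error terms the kernel bound $|\cK_t(y)|\lc|y-2tNe_1|^{-M}$ on $\cP_\nu$ with $y\notin\cQ_\nu^*$ gives a factor $(\rho N^\eps)^{-M}$, which for large $M$ dominates all polynomial-in-$(\rho,N)$ volume factors; the result is absorbed using the lower bound \eqref{lowertriv}.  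No interpolation is needed.
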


\begin{lemma}\label{billemma}
Let $2\le p\le  q\le r\le \frac{2q}{q-2}$ and $\eps>0$.
Let $\psi\in C_c^\infty$ be supported in the annulus
$\{\xi\in \bbR^d: 1/2\le |\xi|\le 2\}$. Then, for  $\la> 1$,
\begin{multline}\label{bilinform}
\big\|U\psi\big(\tfrac D\la\big) f\big\|_{L^q(\bbR^d;L^r[0,1])}
\\
\lc \Big(\la^{\frac{4}{q}-2d(\frac1p-\frac1q)}+\sup_{1< N< \la}
N^{\frac4r-2d(\frac1p-\frac1q)+\eps} \Lambda_{p,q,r}(N,C
\la^2/N^2)\Big)^{1/2} \la^{-\frac 2r+d(\frac 1p-\frac 1q)}
\|f\|_p\,.
\end{multline}
\end{lemma}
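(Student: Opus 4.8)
The plan is to follow the bilinear‑to‑linear scheme of \cite{tavave}, adapted to the mixed norms $L^q(L^r)$ and to the operator $e^{it\Delta}$. First I would use a parabolic rescaling $\xi\mapsto\lambda\xi$, $(x,t)\mapsto(\lambda x,\lambda^2t)$ to reduce \eqref{bilinform} to the equivalent inequality
\[
\|Ug\|_{L^q(\bbR^d;L^r[0,\lambda^2])}^{2}\lesssim\Big(\lambda^{\frac4q-2d(\frac1p-\frac1q)}+\sup_{1<N<\lambda}N^{\frac4r-2d(\frac1p-\frac1q)+\eps}\Lambda_{p,q,r}\big(N,C\lambda^2/N^2\big)\Big)\|g\|_p^2
\]
for every $g$ with $\widehat g$ supported in the fixed annulus $\{\tfrac12\le|\xi|\le2\}$; this uses $\|U\psi(D/\lambda)f\|_{L^q(L^r[0,1])}=\lambda^{d-2/r-d/q}\|Ug\|_{L^q(L^r[0,\lambda^2])}$ and $\|g\|_p=\lambda^{d/p-d}\|\psi(D/\lambda)f\|_p$. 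Since $\|Ug\|_{L^q(L^r[0,\lambda^2])}^2=\|(Ug)^2\|_{L^{q/2}(L^{r/2}[0,\lambda^2])}$, I would expand $(Ug)^2=\sum_{Q,Q'}Ug_Q\,Ug_{Q'}$ and perform a Whitney decomposition of the product of annuli away from the diagonal: a Whitney pair at scale $N^{-1}$ consists of two $N^{-1}$-cubes $Q,Q'$ with $\mathrm{dist}(Q,Q')\approx N^{-1}$, and, stopping the decomposition at scale $\lambda^{-1}$ (below which the interval $[0,\lambda^2]$ sees no further structure), one gets $(Ug)^2=\sum_{N}\sum_{(Q,Q')\in\mathcal W_N}Ug_Q\,Ug_{Q'}+\sum_{(Q,Q')\in\mathcal D}Ug_Q\,Ug_{Q'}$, the last sum running over coincident or adjacent $\lambda^{-1}$-cubes, with $g_Q=P_Qg$. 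The point of working with $(Ug)^2$ rather than $|Ug|^2$ is that the space--time Fourier support of $Ug_Q\,Ug_{Q'}$ lies in an $O(N^{-1})$-box centred at $(\xi_Q+\xi_{Q'},|\xi_Q|^2+|\xi_{Q'}|^2)$, so that for fixed $N$ these boxes (and the Fourier supports of the diagonal pieces, for $N=\lambda$) have bounded overlap; the \emph{difference} frequencies $\xi_Q-\xi_{Q'}$ arising from $|Ug|^2$ would instead all cluster near $0$.

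The core estimate would be at a fixed dyadic scale $N$. Since the space--time Fourier supports have bounded overlap and, by hypothesis, $\frac2q+\frac2r\ge1$ (which is exactly $r\le\frac{2q}{q-2}$), an almost-orthogonality estimate in $L^{q/2}(L^{r/2})$ gives $\big\|\sum_{(Q,Q')\in\mathcal W_N}Ug_Q\,Ug_{Q'}\big\|_{L^{q/2}(L^{r/2})}\lesssim\big(\sum_{(Q,Q')\in\mathcal W_N}\|Ug_Q\,Ug_{Q'}\|_{L^{q/2}(L^{r/2})}^{q/2}\big)^{2/q}$. Parabolically rescaling a single Whitney pair at scale $N^{-1}$ to a $1$-separated pair of unit cubes --- under which $[0,\lambda^2]$ becomes $[0,C\lambda^2/N^2]$ and the Jacobians produce $\|\widetilde g_Q\|_p=N^{d-d/p}\|g_Q\|_p$ together with a dilation factor relating the two bilinear norms --- one would find
\[
\|Ug_Q\,Ug_{Q'}\|_{L^{q/2}(L^{r/2}[0,\lambda^2])}\lesssim N^{\frac4r-2d(\frac1p-\frac1q)}\,\Lambda_{p,q,r}\big(N,C\lambda^2/N^2\big)\,\|g_Q\|_p\|g_{Q'}\|_p ,
\]
the power of $N$ being exactly the one in the statement. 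Using that each $Q$ has $O(1)$ Whitney partners, the arithmetic--geometric mean inequality, $\ell^p\hookrightarrow\ell^q$ (which holds since $p\le q$), and the elementary bound $\big(\sum_Q\|g_Q\|_p^p\big)^{1/p}\lesssim\|g\|_p$ for $p\ge2$ (interpolation between $\ell^2(L^2)$ and $\ell^\infty(L^\infty)$), one concludes $\big\|\sum_{(Q,Q')\in\mathcal W_N}Ug_Q\,Ug_{Q'}\big\|_{L^{q/2}(L^{r/2}[0,\lambda^2])}\lesssim N^{\frac4r-2d(\frac1p-\frac1q)}\Lambda_{p,q,r}(N,C\lambda^2/N^2)\|g\|_p^2$. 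The diagonal term is estimated identically, with the bilinear input replaced by the trivial fixed-time bound $\|Ug_\kappa\|_{L^q(L^r[0,\lambda^2])}\lesssim\lambda^{2/r+d(1/q-1/p)}\|g_\kappa\|_p$ for a single $\lambda^{-1}$-cube $\kappa$ (Bernstein, together with the boundedness of $e^{it\Delta}$ on frequency-localized functions over a bounded time interval); since $q\le r$ this contributes $\lesssim\lambda^{4/r+2d(1/q-1/p)}\|g\|_p^2\le\lambda^{4/q-2d(1/p-1/q)}\|g\|_p^2$.

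It then remains to sum over scales. There are $O(\log\lambda)$ dyadic values of $N$, and writing the bound at scale $2^j$ as $a_j$ one has $\sum_j a_j\le\big(\sum_j2^{-j\eps}\big)\sup_j2^{j\eps}a_j\lesssim_\eps\sup_j2^{j\eps}a_j$, so the logarithm is absorbed into the gain $N^\eps$ appearing in the statement (the finitely many coarsest scales $N\approx1$ being incorporated into the supremum, $\Lambda_{p,q,r}$ depending only mildly on $N$ in a bounded range); undoing the scaling of the first step then gives \eqref{bilinform}. The principal difficulty will be the fixed-scale step: one must recognise that replacing $|Ug|^2$ by $(Ug)^2$ is what restores the bounded overlap needed for almost-orthogonality, establish the mixed-norm orthogonality in the range $q/2\le2$ where the $L^2$ argument is unavailable (this is precisely where $r\le\frac{2q}{q-2}$ is used), and verify that the accumulated dilation factors reproduce the exponent $\frac4r-2d(\frac1p-\frac1q)$ exactly; the remaining steps are routine bookkeeping.
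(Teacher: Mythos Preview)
Your proposal is correct and follows essentially the same route as the paper: the parabolic rescaling to the unit annulus, the Whitney decomposition of $(Ug)^2$ stopped at scale $\lambda^{-1}$, the almost-orthogonality in $L^{q/2}(L^{r/2})$ under $q\le r\le \tfrac{2q}{q-2}$, the rescaling of each Whitney pair to produce the factor $N^{4/r-2d(1/p-1/q)}$, and the conversion of $\sum_j$ to $\sup_j$ at the cost of $N^{\eps}$ all match the paper's argument. Two minor remarks: (i) your explicit intermediate scaling identities (the factors $\lambda^{d-2/r-d/q}$ and $\lambda^{d/p-d}$) are off by a common power of $\lambda^d$, though their ratio --- which is all that is used --- is correct; (ii) for the diagonal pieces the paper rescales to a unit ball centred at $\lambda e_1$ and applies Bernstein in $t$ to obtain $\lambda^{2/q-d(1/p-1/q)}$ per factor, whereas your direct argument gives the slightly stronger $\lambda^{2/r-d(1/p-1/q)}$, which you then discard via $r\ge q$.
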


Lemma \ref{2sthenps} relies on a localization argument such as in
\cite{le}
and Lemma \ref{billemma} relies on a by now standard scaling argument in
\cite{tavave} which reduces estimates for bilinear  operators with separation assumptions to estimates for linear operators.

We may combine  \eqref{p0pest},  with $p_0=2$, and
\eqref{bilinform} to obtain
\begin{corollary}\label{2qrcor}
Let  $2\le p\le q\le r\le \frac{2q}{q-2}$.  Suppose that \Be\label{2qrassumption}
\sup_{\rho> 1} \Lambda_{2,q,r}(N,\rho;d)\lc N^{\gamma},\quad \text{
for some\quad $\gamma<2d\big(1-\tfrac 1p-\tfrac 1q\big)-\tfrac 4r$} .\Ee Then if
$d(1-\frac1p-\frac1q)-\frac2q\ge 0$, then for all $\la>1$,
\Be\label{dyadfreqconclusion} \big\|
U\psi\big(\tfrac D\la\big)f\big\|_{L^q(\bbR^d;L^r[0,1])} \lc \la^{d(1-\frac 1p-\frac 1q)-\frac
2r} \|f\|_p\,. \Ee
\end{corollary}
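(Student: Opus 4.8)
The plan is to feed Lemma~\ref{2sthenps} (applied with $p_0=2$) together with the hypothesis \eqref{2qrassumption} into the bilinear-to-linear estimate of Lemma~\ref{billemma}, and then simply to bookkeep the powers of $\la$. Since $2\le p\le q\le r\le \frac{2q}{q-2}$ is assumed here, both lemmata apply, and $p_0=2\le p$ is admissible in Lemma~\ref{2sthenps}. First, applying Lemma~\ref{2sthenps} with $p_0=2$ at the scale $\rho=C\la^2/N^2$ and inserting the assumed bound $\sup_{\rho>1}\Lambda_{2,q,r}(N,\rho;d)\lc N^\gamma$, I would obtain
\[
\Lambda_{p,q,r}\big(N,C\la^2/N^2\big)\;\lc\; N^{\eps_{\rm{o}}}\big(\la^2/N^2\big)^{2d(\frac12-\frac1p)}N^\gamma
\;=\;N^{\,\eps_{\rm{o}}+\gamma-4d(\frac12-\frac1p)}\,\la^{\,4d(\frac12-\frac1p)}\,,
\]
where the implicit constant absorbs the harmless factor $C^{2d(1/2-1/p)}$.

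Next I would substitute this into the right-hand side of \eqref{bilinform}. The exponent of $N$ inside the supremum $\sup_{1<N<\la}$ there becomes
\[
\tfrac4r-2d\big(\tfrac1p-\tfrac1q\big)+\eps+\eps_{\rm{o}}+\gamma-4d\big(\tfrac12-\tfrac1p\big)
\;=\;\tfrac4r+2d\big(\tfrac1p+\tfrac1q\big)-2d+\gamma+\eps+\eps_{\rm{o}}\,.
\]
The \emph{strict} inequality $\gamma<2d(1-\frac1p-\frac1q)-\frac4r$ in \eqref{2qrassumption} says exactly that $\tfrac4r+2d(\tfrac1p+\tfrac1q)-2d+\gamma<0$, so once $\eps$ and $\eps_{\rm{o}}$ are chosen small enough (depending on that positive gap) the exponent of $N$ is negative. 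Hence the supremum over $1<N<\la$ is $O(1)$, times $\la^{4d(\frac12-\frac1p)}$, and the bracketed quantity in \eqref{bilinform} is bounded by a constant times $\la^{\frac4q-2d(\frac1p-\frac1q)}+\la^{4d(\frac12-\frac1p)}$.

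Finally, the remaining assumption $d(1-\frac1p-\frac1q)-\frac2q\ge0$ is precisely what is needed to dominate the first of these two terms by the second: the inequality $\frac4q-2d(\frac1p-\frac1q)\le 4d(\frac12-\frac1p)$ rearranges to $\frac{d+2}q+\frac dp\le d$. Consequently the bracket in \eqref{bilinform} is $\lc\la^{4d(\frac12-\frac1p)}$; taking the square root and multiplying by the prefactor $\la^{-\frac2r+d(\frac1p-\frac1q)}$ produces exactly $\la^{\,d(1-\frac1p-\frac1q)-\frac2r}\|f\|_p$, which is \eqref{dyadfreqconclusion}.

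I do not expect any serious obstacle: the proof is essentially arithmetic bookkeeping. The one point that genuinely needs care — and the reason \eqref{2qrassumption} is phrased as a strict inequality — is that Lemma~\ref{billemma} loses an arbitrarily small power $N^{\eps}$ and Lemma~\ref{2sthenps} a further $N^{\eps_{\rm{o}}}$, so these two losses must be absorbed when taking the supremum over the dyadic frequency scales $1<N<\la$; this is possible exactly because $2d(1-\frac1p-\frac1q)-\frac4r-\gamma>0$. Beyond that, the only checks are that the parameter constraints of the two lemmata (in particular $r\le \frac{2q}{q-2}$) are satisfied under the hypotheses of the corollary.
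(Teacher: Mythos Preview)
Your proposal is correct and follows precisely the approach indicated in the paper, which simply states that the corollary is obtained by combining \eqref{p0pest} with $p_0=2$ and \eqref{bilinform}. Your bookkeeping of the exponents is accurate, including the identification of $d(1-\tfrac1p-\tfrac1q)-\tfrac2q\ge 0$ as exactly the condition ensuring the first bracketed term in \eqref{bilinform} is dominated by the second.
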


\noindent Supposing this for the moment we give the
\begin{proof}[Proof of Theorem \ref{pop1}]
By  Proposition \ref{combin}
it suffices to prove, in two spatial dimensions, the estimate
\eqref{dyadfreqconclusion}
for $p=q>16/5$ and $r\ge 4$. Using \eqref{lrv-result}, we
put $\gamma=2/q-2/r$ and verify that
the condition   \eqref{2qrassumption} with $d=2$   in the
range $p=q>16/5$ and $r\ge 4$.
Thus \eqref{dyadfreqconclusion} holds in this range, and we are done.
\end{proof}

\begin{proof}[Proof of Lemma \ref{2sthenps}]
Let $\eta_1$, $\eta_2$ be smooth, supported in balls of diameter 1/2
which are contained in $\{\xi:|\xi-Ne_1|\le 2d\}$, and which are separated by
1/2.  Define the  operators $S_1,\,S_2$ by
$\widehat{S_i f}(\xi,t)=\eta_i(\xi)\, \widehat
{Uf}(\xi)$, $i=1,2$.
It suffices to prove that
$\|S_1f_1\,S_2
f_2\|_{L^{q/2}(\mathbb{R}^d\!,\,L^{r/2}[0,\rho])}$ is dominated by $\|f_1\|_p\|f_2\|_p$ times a constant multiple of the expression on the right hand side of
\eqref{p0pest}.

We partition $\mathbb{R}^d$ into cubes $\mathcal{Q}_\nuv$ of side
 $\rho$ with centre $\rho\nuv\in \rho\mathbb{Z}^d$,
 and define \Be\mathcal{P}_\nuv=
\{ (x,t)\in \mathbb{R}^d\times [0,\rho]\,:\,x-2tNe_1\in
\mathcal{Q}_\nuv\}.
\Ee The parallelipipeds form a partition of
$\mathbb{R}^d\times[0,\rho]$.
For fixed $x$ the intervals
 $I^x_\nuv= \{t:(x,t)\in \cP_\nuv\}$ are disjoint.
Thus
\begin{align*}
\|F\|^{q/2}_{L^{q/2}(\mathbb{R}^d;\,L^{r/2}[0,\rho])}\le
\int_{\mathbb{R}^d}\Big(\sum_{\nuv}\int_{I^x_\nuv}|F(x,t)|^{r/2}dt\Big)^{q/r}
dx\le\sum_{\nuv}\big\|\chi_{\cP_\nuv}F\big\|_{L^{q/2}(\mathbb
R^d;L^{r/2}[0,\rho])}^{q/2};
\end{align*}
here
we used the triangle inequality for
$\|\cdot\|_{\ell^{q/r}}^{q/r}$ as $q/r\le 1$.

Taking $F= S_1f_1S_2f_2$, and
denoting by
$\mathcal{Q}_\nuv^*$, the enlarged cube with side
$50d\rho N^\e$, where $0<\eps<4d\eps_{\rm{o}}$, we obtain
\begin{align*}
\big\|
S_1f_1\,S_2f_2
\big\|^{q/2}_{L^{q/2}(\mathbb{R}^d;\,L^{r/2}[0,\rho])}&
\le\,
\sum_{\nuv} \|\chi_{\cP_\nuv}S_1f_1\,S_2f_2\|_{L^{q/2}(\mathbb
R^d;L^{r/2}[0,\rho])}^{q/2}\\&\lc
\sum_{\nuv}(I_\nuv^{q/2}+II_\nuv^{q/2}+III_\nuv^{q/2}+IV_\nuv^{q/2}),
\end{align*}
where
\begin{align}\label{fs}\begin{split}
I_{\nuv}&=
\big\|\chi_{\cP_\nuv}S_1[f_1\chi_{\mathcal{Q}_\nuv^*}]\,S_2[f_2\chi_{\mathcal{Q}_\nuv^*}]
\big\|_{L^{q/2}(\mathbb R^d;L^{r/2}[0,\rho])},
\\
 I\!I_\nuv&=
\big\|\chi_{\cP_\nuv}
S_1[f_1\chi_{\mathbb{R}^d\backslash\mathcal{Q}_\nuv^*}]\,S_2[f_2\chi_{\mathcal{Q}_\nuv^*}]
\big\|_{L^{q/2}(\mathbb R^d;L^{r/2}[0,\rho])},
\\
I\!I\!I_\nuv&=
\big\|\chi_{\cP_\nuv}S_1[f_1\chi_{\mathcal{Q}_\nuv^*}]\,S_2[f_2\chi_{\mathbb{R}^d\backslash\mathcal{Q}_\nuv^*}]
\big\|_{L^{q/2}(\mathbb R^d;L^{r/2}[0,\rho])},
\\
IV_\nuv&= \big\|\chi_{\cP_\nuv}
S_1[f_1\chi_{\mathbb{R}^d\backslash\mathcal{Q}_\nuv^*}]\,S_2[f_2\chi_{\mathbb{R}^d\backslash\mathcal{Q}_\nuv^*}]
\big\|_{L^{q/2}(\mathbb R^d;L^{r/2}[0,\rho])}.
\end{split}
\end{align}

First we consider the main terms $I_\nu$. By
H\"older's inequality,
\begin{align*}
I_\nuv &\le \Lambda_{p_0,q,r}(N,\rho)
\prod_{i=1}^2\|f_i\chi_{\mathcal{Q}_\nuv^*}\|_{p_0}
\lc \Lambda_{p_0,q,r}(N,\rho) (\rho N^\eps)^{2 d(\frac 1{p_0}-\frac 1p)}
\prod_{i=1}^2\|f_i\chi_{\mathcal{Q}_\nuv^*}\|_{p}
\end{align*}
We use  the Schwarz inequality, the embedding $\ell^p\subset \ell^q$, $p\le q$,
 and the fact that every $x$ is contained in only $O(N^{\eps d})$ of the cubes $\cQ_\nuv^*$ to get
\begin{align*}
\sum_\nuv\prod_{i=1}^2\|f_i\chi_{\mathcal{Q}_\nuv^*}\|_{p}^{q/2}
\le
\prod_{i=1}^2
\Big(\sum_\nuv\|f_i\chi_{\mathcal{Q}_\nuv^*}\|^{q}_{p}\Big)^{1/2}
\lc N^{\eps d}
\prod_{i=1}^2\|f_i\|_p^{q/2}\,.
\end{align*}
Combining the previous two estimates we bound
\Be\label{Inubound}(\,\sum_\nuv I_\nuv^{q/2}\,)^{2/q}\lc  N^{2d \eps(\frac{1}{p_0}-\frac 1p+\frac 1q)}
 \rho^{2d(\frac{1}{p_0}-\frac 1p)}
\Lambda_{p_0,q,r}(N,\rho)
\prod_{i=1}^2\|f_i\|_p.\Ee

We use very crude estimates to handle the remaining three terms
which can to be dominated by
$C_{M,\eps}(N^\eps \rho)^{-M}\|f_1\|_p \|f_2\|_p,$ which finishes the proof since
$\Lambda_{p_0,q,r}(N,\rho)\gc N^{\frac 2q -\frac 2r}$ by \eqref{lowertriv}.

We  only give the argument to bound $\sum_\nuv I\!I_\nuv^{q/2}$ as
the other terms  are handled similarly. By the Schwarz inequality we estimate $\sum_\nuv I\!I_\nuv^{q/2}$ by
\Be\label{IIsplit}
\Big(\sum_\nuv \big\|\chi_{\cP_\nuv}S_1[f_1\chi_{\bbR^d\setminus
\cQ_\nuv^*}]\big\|^q_{L^q(\bbR^d;L^r[0,\rho])}\Big)^{1/2} \Big(\sum_\nuv
\big\|S_2[f_2\chi_{\cQ_\nuv^*}]\big\|_{L^q(\bbR^d;L^r[0,\rho])}^q\Big)^{1/2}\,. \Ee
For the second factor we use a  wasteful bound,
namely that the $L^p\to L^q(\bbR^d; L^r[0,\rho])$ operator norm of
$S_2$ is $O(\rho^{1/r}N^{d})$. Consequently, the second factor in
\eqref{IIsplit} can be bounded by $C\rho^{q/(2r)}
N^{d(\eps+q/2)}\|f_2\|_p^{q/2}$.

We consider the first factor in \eqref{IIsplit} and write
 $S_1f(x,t)= \cK_t*f(x)$ where
$$
\cK_t(y)=\frac{1}{(2\pi)^{d}}
\int_{\mathbb{R}^d}
\chi(\xi-Ne_1)e^{-it|\xi|^2+i\inn{y}{\xi}}\,d\xi
$$
with $\chi \in C_c^\infty$ equal to 1 on the ball of radius $2d$ centred at the origin.
Integration by parts yields that for every $t\in [0,\rho]$
$$|\cK_t(y)|\le C_{M}|y-2tNe_1|^{-M}\quad\text{ if }\ |y-2tNe_1|\ge 4d\rho.$$
 Let $\fc_\nuv$ be the centre of
$\cQ_\nuv^*$. If $x-y\in \bbR^d\setminus \cQ^*_\nuv$ and $(x,t)\in
\cP_\nuv$, then $|x-y-\fc_\nu|\ge 10d \rho N^\eps$,
$|x-2tNe_1-\fc_\nu| \le 2d \rho N^\eps$,  and therefore also $|y-2t Ne_1|
\ge 8d \rho N^\eps$. Thus for this choice of $(x,t)$ and $y$ we have
$$
\big|S_1[f_1 \chi_{\bbR^d\setminus \cQ_\nuv^*}]\big| \lc (\rho
N^{\eps})^{-M+d+1} \int_{\substack{|y-2tNe_1|\ge 8d\rho  N^\eps}}
\frac{|f_1(x-y)|}{|y-2tNe_1|^{d+1}}dy
$$ and the integral is  bounded by
$(\rho N)^{d+1}\int (1+|y|)^{-d-1} |f_1(x-y)|dy$. Here we use
$\rho> 1$. Now let $\cQ_\nuv^{**}$ be the cube of sidelength
$\rho(2+N)$ centred at $\fc_\nu$; then
$\cQ_\nuv^{**}\times[0,\rho]$ contains $\cP_\nuv$. Letting
$\cC_{\rho,N}:=\rho^{1/r}(\rho N^{\eps})^{-M_1+d+1}(\rho N)^{d+1}$,
we have
\begin{align*}&\sum_\nuv
 \big\|\chi_{\cP_\nuv}S_1[f_1\chi_{\bbR^d\setminus \cQ_\nuv^*}]\big\|_{L^q(\bbR^d;L^r[0,\rho])}^q
\lc\cC_{\rho,N}^q\sum_\nuv \int_{\cQ_\nuv^{**}}\Big| \int
\frac{|f_1(x-y)|}{(1+|y|)^{d+1}}dy\Big|^q dx
\end{align*}
which is $\lc\cC_{\rho,N}^q(\rho N)^{(d+1)}
\|f_1\|_p^q$; here one uses Young's inequality and the fact that
each $x\in \bbR^d$ is contained in at most $O((\rho N)^{d+1})$ of the cubes $\cQ_\nu^{**}$.
Collecting the estimates yields  the crude bound
$$\sum_\nuv I\!I_\nuv^{q/2}
\le C_{M} (\rho N^{\eps})^{-M} (\rho N)^{10dq} \|f_1\|_p^{q/2}
\|f_2\|_p^{q/2},$$
and we conclude by  choosing $M$ sufficiently large.
\end{proof}

\begin{proof}[Proof of Lemma \ref{billemma}]
For $j\ge 0$, we write
$$A(j,\la ):= 2^{2j(\frac{2}{r}-d(\frac 1p-\frac 1q))} \sup_{2^{j-1}\le N\le 2^{j+1}}
\Lambda_{p,q,r}(N, C\la ^22^{-2{j+1}}).
$$
Define $T=U\psi(D) $,
and thus  $U\psi(\tfrac{D}{\la} )f(x,t)= T[f(\la ^{-1}\cdot)](\la x,
\la ^2t)$. By scaling, \Be\label{firstscaling} \|U\psi\big(\tfrac
D\la \big)\|_{L^p\to {L^q(\bbR^d;L^r[0,1]})} = \la
^{-\frac2r+d(\frac 1p-\frac 1q)} \|T\|_{L^p\to L^q(\bbR^d;L^r[0,\la
^2])}\, , \Ee so that the statement of the lemma is an immediate
consequence of \Be\label{Tbound}\|T\|_{L^p\to L^q(\mathbb
R^d;L^r[0,\la ^2])}
\lc\Big(\la^{\frac{4}q-2d(\frac1p-\frac1q)}+\sum_{1\le 2^j\le \la}
A(j,\la) \Big)^{1/2}. \Ee

Now by scaling we have that \Be \label{separationrescaled} \big\|Tf_1\,
Tf_2\big\|_{L^{q/2}(\mathbb R^d ; L^{r/2}[0,\la^2])} \lc
A(j,\la) \prod_{i=1}^2\|f_i\|_p, \Ee whenever $\widehat f_1$ and
$\widehat f_2$ are supported in a $2^{-j+1}$ ball, contained
in $\{\xi:1/2<|\xi|\le 2\}$,  and their supports are
$2^{-j}$-separated.  We will also require the following
simpler estimates
 \Be \label{noseparation} \big\|Tf_1\,
Tf_2\big\|_{L^{q/2}(\mathbb R^d ; L^{r/2}[0,\la^2])} \lc
\la^{\frac{4}{q}-2d(\frac{1}{p}-\frac{1}{q})}\prod_{i=1}^2\|f_i\|_p, \Ee
 whenever $\widehat f_1$ and
$\widehat f_2$ are supported in an ball of radius $\la^{-1}$, contained
in $\{\xi:1/2<|\xi|\le 2\}$. By the Schwarz inequality, this follows from $\big\|Tf_1 \big\|_{L^{q}(\mathbb R^d ; L^{r}[0,\la^2])}\lc
\la^{\frac2q-d(\frac1p-\frac1q)}\|f_1\|_p$. Let $t\mapsto \varpi(t)$ be a Schwartz function which is
positive on $[0, 4d]$ and whose Fourier transform is supported in $[-1,1]$.
 By scaling and rotation this would follow from \Be \label{triviallinear}
\big\| \varpi Tf\big\|_{L^{q}(\mathbb R^d ;L^{r}(\R))}
\lc \la^{\frac{2}q-\frac{2}r}\|f\|_p\Ee whenever
$\widehat f$ is supported in  $\{\xi: |\xi-\la e_1|\le 2d\}$. By
a change of variables and trivial estimates it is easy to see
\eqref{triviallinear} for $1\le p\le q=r\le \infty$.
The estimate for $r>q$  follows by applying Bernstein's inequality in $t$
since the temporal Fourier transform of $\varpi Tf$ is contained in
$\{ s :  s \sim \la^2\}$.

We now argue  similarly as in \cite{tavave}. Write $\|T
f\|_{L^{q}(\mathbb R^d ;L^r[0,\la^2])}^2=\|Tf\,Tf\|_{L^{q/2}(\mathbb
R^d ;L^{r/2}[0,\la^2])}.$
For each $j$, $1\le 2^{j}\le 2\la$,  we  tile $\bbR^d$
 with dyadic cubes $ s _{\ell}^j=\prod_{i=1}^d [2^{-j}\ell_i, 2^{-j} \ell_{i+1})$
of sidelength $2^{-j}$, indexed by $\ell \in \bbZ^d$. For  $j$,
$1\le 2^{j}\le \la$, we write $\ell\sim_j\tell$ if $ s _{\ell}^j$ and
$ s _{\tell}^{j}$ have adjacent parents, but are not adjacent. When
$\la< 2^{j}\le 2\la$, we mean  by
$\ell\sim_j\tell$
that the distance between $ s _{\ell}^j$
and $ s _{\tell}^{j}$ is $\lesssim \la^{-1}$. Then, we then can write
for every $(\xi,\eta)\in \bbR^d$, with $\xi\neq \eta$,
\Be\label{partofunity}
\sum_{1\le 2^{j}\le 2\la} \sum_{\substack {(\ell,\tell)\\
\ell\sim_j\tell}}\chi_{ s _\ell^j}(\xi) \chi_{ s _\tell^j}(\eta)=1
\Ee

Define $P^j_\ell$ by $\widehat {P^j_\ell f}=
\chi_{ s _{\ell}^j}\widehat{f}$; then
the operators $P^j_\ell$ are bounded on $L^p$, $1<p<\infty$,
 with operator norms independent of $\ell$ and $j$.
For any
Schwartz function $f$
we have by \eqref{partofunity}
$$
[T f(x,t)]^2= \sum_{1\le 2^{j}\le 2\la} \sum_{(\ell,\tell):
\ell\sim_j\tell} T P_{\ell}^j f(x,t)\,T P_{\tell}^{j}f(x,t)
$$
Let $\vphi\in C^\infty_c$ be supported in $[-1,1]^d$, satisfying
 $\sum_{\fz\in\bbZ^d}\vphi(\xi-\fz)=1$ for all $\xi\in \bbR^d$. Define
$\widetilde P_\fz^j$ as acting on $L^a(L^b)$ functions by
$\widehat {\widetilde P_\fz^j G}(\xi,t)= \vphi(2^j\xi-\fz) \widehat G(\xi,t)$.
We use the inequality
\Be\label{qr-orth}
\Big\|\sum_{\fz} \widetilde P_{\fz}^j
G_\fz\Big\|_{L^a(L^b)}\le C
\big\|\{G_{\fz}\}\big\|_{\ell^{a}(L^a(L^b))},\quad
1\le a\le 2,\quad a\le b\le a',
\Ee
The constant $C$ in \eqref{qr-orth} is independent of $j$.
The inequality follows from  Plancherel's theorem in the case $a=b=2$,
and from an application of Minkowski's inequality in the case $a=1$, $1\le b\le \infty$. The intermediate cases follow by interpolation.
Note that for any $j$ and any $\fz\in \fZ^d$ the number of pairs $(\ell, \tell)$ with $\ell\sim_j \tell$ for which
$\widetilde P_{\fz}^j [T P_{\ell}^j f \,T P_{\tell}^{j}f]\neq 0$ is uniformly bounded
(independent of  $j$, $\fz$, $f$). Thus inequality
\eqref{qr-orth} applied with $a=q/2$, $b=r/2$,
implies
\begin{align}\label{uno}\begin{split}
\|T f\|^2_{L^{q}(L^r[0,\la^2])}\lc \sum_{1\le 2^{j}\le
2\la}\Big(\sum_{{\ell}\sim_j{\tell}}\|T P_{\ell}^jf\,\,T
P_{\tell}^{j}f\|^{q/2}_{L^{q/2}(L^{r/2}[0,\la^2])}\Big)^{2/q};
\end{split}\end{align}
here we use that
$1\le q/2\le r/2\le (q/2)'$
(i.e.  $q\le r\le \frac{2q}{q-2}$ which implies that $q/2\le 2$.)

Now by \eqref{separationrescaled} and \eqref{noseparation} the right
hand side of \eqref{uno} is dominated by a constant times
\begin{align*}
 \sum_{1\le 2^{j}\le \la} A(j,\la) \Big(\sum_{{\ell}\sim_j{\tell}}\|
P_{\ell}^jf\|_p^{q/2}\|P_{\tell}^{j}f\|^{q/2}_{p}\Big)^{2/q}+\la^{\frac{4}{q}-2d(\frac{1}{p}-\frac{1}{q})}\Big(\sum_{{\ell}\sim_{j_0}{\tell}}\|
P_{\ell}^{j_0}f\|_p^{q/2}\|P_{\tell}^{j_0}f\|^{q/2}_{p}\Big)^{2/q}\\\lc
\la^{\frac{4}{q}-2d(\frac{1}{p}-\frac{1}{q})}\Big(\sum_{\ell}\|
P_{\ell}^{j_0}f\|_p^{q}\Big)^{2/q}+\sum_{1\le 2^{j}\le \la}
A(j,\la) \Big(\sum_{\ell}\| P_{\ell}^jf\|_p^{q}\Big)^{2/q}.
\end{align*}
Here $j_0$ is the integer such that $\la<2^{j_0}\le 2\la$, and  we have
used the Schwarz inequality and the fact that for each $(j,\ell)$
the number of $\tell$ with $\ell\sim_j\tell$ is uniformly bounded.
Since $2\le p\le q$, we also have
$$\Big(\sum_{\ell}\| P_{\ell}^jf\|_p^{q}\Big)^{1/q}\lc \|f\|_p,$$
and thus we have shown \eqref{Tbound}.
\end{proof}

\end{document}